\RequirePackage{fix-cm}
\documentclass[sn-mathphys]{sn-jnl-02}

\usepackage[utf8]{inputenc} 		
\usepackage[T1]{fontenc}    		
\usepackage{tabularx}
\usepackage{longtable}
\usepackage{subfigure}
\usepackage{amssymb}
\usepackage{comment}
\usepackage{enumitem}
\usepackage{footmisc}
\usepackage{ulem}

\colorlet{color1}{blue}
\colorlet{color2}{red!50!black}

\definecolor{ivory}{RGB}{218,215,203}

\definecolor{cuhkp}{RGB}{98,56,105} 	
\definecolor{cuhkpl}{RGB}{152,24,147} 	
\definecolor{cuhkb}{RGB}{219,160,1} 	
\definecolor{cuhkbd}{RGB}{178,129,0} 	
\definecolor{cuhkr}{RGB}{88,35,155}  	

\definecolor{blackp}{RGB}{0,0,0} 
\definecolor{redp}{RGB}{255,0,0}
\definecolor{orangep}{RGB}{255,128,0}
\definecolor{brownp}{RGB}{128,77,0}
\definecolor{yellowp}{RGB}{255,230,0}
\definecolor{greenp}{RGB}{128,230,0}
\definecolor{bluep}{RGB}{0,128,255}
\definecolor{purplep}{RGB}{152,24,147}
\definecolor{pinkp}{RGB}{230,0,128}   
\definecolor{lavender}{rgb}{0.9, 0.9, 0.98}

\DeclareMathOperator*{\argmin}{argmin}

\newcommand{\rmn}[1]{\textup{\textrm{#1}}}

\renewcommand{\R}{\mathbb{R}}
\newcommand{\N}{\mathbb{N}}
\newcommand{\Rn}{\mathbb{R}^n}
\newcommand{\Rex}{(-\infty,\infty]}

\newcommand{\vp}{\varphi}

\newcommand{\dist}{\mathrm{dist}}
\newcommand{\crit}{\mathrm{crit}}

\newcommand{\iprod}[2]{\langle #1, #2 \rangle}

\newcommand{\be}{\begin{equation}}
\newcommand{\ee}{\end{equation}}

\newcommand\prox[1]{\mathrm{prox}_{\lambda\varphi}(#1)}

\newcommand\proxs{\mathrm{prox}_{\lambda\varphi}}

\newcommand\Fnor[1]{F^{\lambda}_{\mathrm{nor}}(#1)}
\newcommand\Fnors{F^{\lambda}_{\mathrm{nor}}}
\newcommand\oFnor{F^{\lambda}_{\mathrm{nor}}}
\newcommand\Fnat[1]{F^{\lambda}_{\mathrm{nat}}(#1)}
\newcommand\oFnat{F^{\lambda}_{\mathrm{nat}}}
\newcommand\dom[1]{\mathrm{dom\\}(#1)}

\newcommand\oprox{\mathrm{prox}_{\lambda\varphi}}

\newcommand\diag{\mathrm{diag}}

\newcolumntype{C}[1]{>{\centering\arraybackslash}p{#1}}

\jyear{2024}%

\theoremstyle{thmstyleone}%

\newtheorem{thm}{Theorem}[section]
\newtheorem{lemma}[thm]{Lemma}

\newtheorem{remark}[thm]{Remark}
\newtheorem{assumption}[thm]{Assumption}
\newtheorem{defn}[thm]{Definition}

\makeatletter
\newcommand\footnoteref[1]{\protected@xdef\@thefnmark{\ref{#1}}\@footnotemark}
\makeatother

\begin{document}

\title[A linesearch-type normal map-based semismooth Newton method]{A linesearch-type normal map-based semismooth Newton method for nonsmooth nonconvex composite optimization}


\author[1]{\fnm{Hanfeng} \sur{Zeng}}\email{hanfengzeng@link.cuhk.edu.cn}

\author[2]{\fnm{Wenqing} \sur{Ouyang}}\email{wo2205@columbia.edu}

\author*[1]{\fnm{Andre} \sur{Milzarek}}\email{andremilzarek@cuhk.edu.cn}
 
\affil[1]{\orgdiv{School of Data Science (SDS)},  \orgname{The Chinese University of Hong Kong, Shenzhen}, \city{Shenzhen}, \state{Guangdong}, \country{China}} 
\affil[2]{\orgdiv{IEOR},  \orgname{Columbia University}, \city{New York}, \country{USA}}


\abstract{We propose a novel linesearch variant of the trust region normal map-based semismooth Newton method developed in [Ouyang and Milzarek, Math. Program. \textbf{212}(1-2), 389--435 (2025)] for solving a class of nonsmooth, nonconvex composite-type optimization problems. Our approach uses adaptive parameter estimation techniques, which allow us to avoid explicit and potentially expensive Lipschitz constant computations. We provide extensive convergence results including global convergence, convergence of the iterates under the Kurdyka-{\L}ojasiewicz inequality, and transition to fast local q-superlinear convergence. Compared to the original trust region framework, the linesearch-based algorithm is simpler and the overall convergence analysis can be conducted under weaker assumptions---in particular, without requiring explicit boundedness conditions on the Hessian approximations and iterates. Numerical experiments on sparse logistic regression, image compression, and nonlinear least squares with group penalty terms demonstrate the efficiency of the proposed approach.} \vspace{-1ex}


\keywords{ Normal map, semismooth Newton method, linesearch globalization, q-superlinear convergence, adaptive parameter estimation, Kurdyka-{\L}ojasiewicz inequality.}


\pacs[MSC Classification]{90C30, 90C53, 90C06, 65K05}

\maketitle


\section{Introduction}
In this work, we consider the composite optimization problem
\begin{align} \label{eq1-1}
\min\limits_{x\in\mathbb{R}^n}~\psi(x):=f(x)+\varphi(x),
\end{align}
where $f:\mathbb{R}^n\rightarrow\mathbb{R}$ is a continuously differentiable (not necessarily convex) function and $\varphi:\mathbb{R}^n\rightarrow(-\infty,\infty]$ is a convex, lower semicontinuous (lsc), and proper (not necessarily smooth) mapping. Problems of this form have widespread applications, including sparse $\ell_1$-regularized problems \cite{tibshirani1996regression,shevade2003simple,donoho2006compressed}, group sparse problems \cite{cotter2005sparse,yuan2006model,meier2008group}, structured dictionary learning \cite{mairal2009online,bach2011optimization}, matrix completion \cite{candes2009exact,cai2010singular}, and machine learning tasks \cite{Bis06,shalev2014understanding,BotCurNoc18} among many others.

Many modern algorithms for solving \eqref{eq1-1} are based on forward-backward splitting techniques and proximal steps \cite{FukMin81, chen1997convergence, ComWaj05, parikh2014proximal}. Let $\oprox: \Rn \to \Rn$, $\prox{x}:= \argmin_{y \in \Rn} \varphi(y) + \frac{1}{2\lambda} \|x-y\|^2$, $\lambda > 0$, denote the proximity operator of $\varphi$ \cite{Mor65}. The classical proximal gradient method, \cite{FukMin81}, then applies a gradient descent step for the smooth function $f$ followed by a proximal ``backward'' step for the nonsmooth convex mapping $\vp$:
\be \label{eq:fbs-intro} x_{k+1} = \prox{x_k - \lambda\nabla f(x_k)}. \ee
The forward-backward scheme in \eqref{eq:fbs-intro} can also be interpreted as a fixed-point procedure using the \textit{natural residual} 
%
\begin{equation*} \oFnat : \Rn \to \Rn, \quad \Fnat{x} := x - \prox{x - \lambda\nabla f(x)}. \label{eq:natural-residual} \end{equation*}
Furthermore, the fixed-point iteration \eqref{eq:fbs-intro} and the natural residual $\oFnat$ can be connected to Robinson's \textit{normal map}:
%
\[ \oFnor : \Rn \to \Rn, \quad \Fnor{z}:=\nabla f(\prox{z})+{\lambda^{-1}}(z-\prox{z}). \label{eq:normal-map} \]
The natural residual $\oFnat$ and normal map $\oFnor$ offer different representations of the underlying first-order necessary optimality conditions of problem \eqref{eq1-1} (cf. Section~\ref{ssec:pre}). The normal map was initially introduced by Robinson \cite{robinson1992normal} and has been primarily applied in the context of classical variational inequalities and generalized equations, particularly when $\oprox$ reduces to the projection $\mathcal P_C$ onto a closed convex set $C$. We refer to \cite{facchinei2007finite} for additional background.

The main goal of this work is to study and develop a semismooth Newton method \cite{QiSun93,qi1993convergence} with linesearch globalization for solving the nonsmooth equation
\be \label{eq:intro-fnor} \oFnor(z) = 0. \ee
Specifically, we perform inexact semismooth Newton steps for \eqref{eq:intro-fnor}, generated via the conjugate gradient method, and use a linesearch-based globalization on a suitable merit function for \eqref{eq1-1}  to ensure sufficient decrease of the Newton steps. The resulting second-order algorithm can then be shown to converge globally and locally at a q-superlinear rate.

Semismooth Newton methods, \cite{QiSun93,qi1993convergence,QiSun99,facchinei2007finite}, represent an important and successful class of algorithms for solving nonlinear, nonsmooth equations. While the local convergence properties of semismooth Newton methods are well understood, global convergence results and globalization strategies are more intricate and specific mechanisms often depend on the underlying structure of the problem. Existing globalization approaches for general nonsmooth equations, $F(x) = 0$, include linesearch-based globalization schemes \cite{HanPanRan92,qi1993convergence,MarQi95} (on suitable merit functions, such as, e.g., $\frac12\|F(x)\|^2$), specialized techniques for complementarity problems and KKT systems \cite{DeLFacKan96,KanQi99,MunFacFerFisKan01}, projection methods for monotone equations \cite{SolSva01,xiao2018regularized}, and trust region-type globalizations \cite{ulbrich2001nonmonotone}. 

There is also a large amount of works on semismooth Newton methods specialized to \eqref{eq1-1} and to the natural residual $\oFnat$. For sparse problems with $\varphi \equiv \mu \|\cdot\|_1$, $\mu > 0$, various semismooth Newton-type methodologies have been analyzed in \cite{GriLor08,milzarek2014semismooth,HanRaa15,ByrChiNocOzt16}. In \cite{PatBem13,SteThePat17}, the authors propose the forward-backward envelope (FBE) as a smooth merit function for the composite problem \eqref{eq1-1}. Based on this envelope, different semismooth Newton methods with linesearch-type globalization are developed and investigated. In \cite{gfrerer2025globally}, a recent FBE-based semismooth Newton method is proposed with global convergence guarantees under local Lipschitz continuity of $\nabla f$. In \cite{milzarek2016numerical}, a multidimensional filter globalization scheme for the semismooth Newton method is studied, which extends the analysis in \cite{milzarek2014semismooth} to the general composite-type setting \eqref{eq1-1}. 

Proximal Newton methods constitute another popular class of second-order approaches for \eqref{eq1-1}. Lee et al. \cite{lee2014proximal} propose the first generic version of the exact proximal Newton method when $f$ is convex. Several variants have been developed for both convex and nonconvex cases, see, e.g., \cite{byrd2016inexact,yue2019family,lee2019inexact, kanzow2021globalized,mordukhovich2023globally}. Recently, Liu et al. \cite{liu2024inexact} propose an inexact proximal Newton method with Hessian regularization using a linesearch framework, where $f$ can be nonconvex. Inspired by \cite{liu2024inexact}, vom Dahl and Kanzow \cite{vom2024inexact} develop a variant that can avoid linesearch by dynamically adjusting the regularization parameter. Notably, this allows the authors to achieve convergence without requiring global Lipschitz continuity of the gradient mapping $\nabla f$.

Although proximal updates \eqref{eq:fbs-intro} and the natural residual $\oFnat$ have been widely adopted in the design of solution methodologies for \eqref{eq1-1}, the normal map $\oFnor$ has generally received less attention. 
To the best of our knowledge, Pieper's PhD thesis \cite{pieper2015finite} is one of the first to consider the normal map $\oFnor$ within a trust region framework and in an optimal control setting; see also \cite{kunisch2016time,boulanger2017sparse,RunAigKunSto18a} for related applications. Mannel and Rund \cite{ManRun20,mannelhybrid} analyze the local properties of a quasi-Newton variant of Pieper's algorithm with Broyden-like updates. However, the trust region globalizations utilized in the previous works \cite{pieper2015finite,kunisch2016time,boulanger2017sparse,RunAigKunSto18a,ManRun20,mannelhybrid} generally do not ensure that accumulation points of the generated iterates correspond to stationary points of \eqref{eq1-1}. To resolve this issue, Ouyang and Milzarek \cite{ouyang2025trust} propose a novel merit function and an adjusted trust region acceptance mechanism. This modification allows the authors to establish comprehensive convergence results, including global convergence (i.e., $\|\Fnor{z_k}\| \to 0$ as $k \to \infty$), convergence of the iterates under the celebrated Kurdyka-{\L}ojasiewicz (KL) property and transition to locally fast q-superlinear convergence. More recently, stochastic normal map approaches have also been developed; see \cite{milzarek2023convergence,qiu2023new}. For more literature on the use of the normal map in variational inequalities and generalized equations, we refer to \cite[Section 1.2]{ouyang2025trust}.  

Normal map-based approaches can offer certain advantages over natural residual-based methodologies. First, since the range of the proximity operator $\oprox$ is a subset of $\dom{{\varphi}}$, the normal map remains well-defined if $\nabla f$ is only defined on the effective domain $\dom{\vp}$. This is a distinctive feature of the normal map and a key motivation for developing normal map-based algorithms. 
In addition, the normal map-based semismooth Newton equation for \eqref{eq:intro-fnor} can be naturally expressed as a symmetric linear system. This linear system can then be solved using standard linear system solvers; we refer to Section~\ref{ssec:semistep} for further details. For natural residual-based approaches, such symmetric transformations might not always be possible and heavily depend on the intrinsic structure of the generalized derivative.

\subsection{Motivation and Contribution}
Motivated by the success and general advantages of the normal map, this work aims to address the following two key questions and observations. First, the existing normal map-based approaches for solving \eqref{eq1-1} all rely on trust region globalization techniques. To the best of our knowledge, a \textit{normal map-based semismooth Newton method with linesearch globalization} does not seem to exist so far. Second, the trust region algorithm in \cite{ouyang2025trust} requires explicit access to the Lipschitz constant of $\nabla f$. This can be costly and intractable for large-scale problems. At the moment, it seems unclear \textit{how to avoid the computation of the Lipschitz constant} (to further enhance the efficiency of the algorithm). We now summarize our core contributions:
\begin{itemize}
  \item We propose a novel linesearch-type normal map-based semismooth Newton method for solving composite problems of the form \eqref{eq1-1}. Inspired by the trust region approach in \cite{ouyang2025trust}, we carefully design a linesearch strategy that allows us to maintain all the theoretical properties of the trust region method \cite{ouyang2025trust} (including comprehensive global and local convergence guarantees) while providing a more concise framework. 
  \item We develop and incorporate an adaptive scheme to estimate the Lipschitz constant of the gradient mapping $\nabla f$ in our linesearch globalization.
  Such adaptive strategies are not uncommon and have been the recent focus in the development of \textit{universal methods} \cite{nesterov2015universal,marumo2024parameter,marumo2024universal}.
  \item Finally, we conduct numerical experiments on sparse logistic regression, nonconvex image compression, and a nonlinear least squares problem with group Lasso penalty, demonstrating the favorable performance of the linesearch-type normal map-based semismooth Newton method. 
\end{itemize}

\subsection{Notation}
By $\iprod{\cdot}{\cdot}$ and $\|\cdot\|$, we denote the standard Euclidean inner product and norm, respectively. For matrices, the norm $\|\cdot \|$ is the spectral norm. The set of symmetric $n \times n$ matrices is denoted by $\mathbb S^n$.
The effective domain of a function $\theta : \Rn \to \Rex$ is defined as $\dom{\theta}=\{x \in \Rn : \theta(x)<\infty\}$. The set $\partial \theta$ denotes Clarke's subdifferential for extended-valued functions or for locally Lipschitz continuous mappings $\theta: \Rn \to \R^m$, see \cite[Section 8.J]{rockafellar2009variational} or \cite[Section 2.4]{clarke1990optimization}. Throughout this work, we assume that $f$ is continuously differentiable and $\varphi:\mathbb{R}^n\rightarrow(-\infty,\infty]$ is a convex, lsc, and proper mapping. In Table~\ref{table:notation}, we summarize some frequently used symbols and notation.

\begin{table}[t]
\centering
\begin{tabular}{p{1.6cm}p{3cm}p{6cm}}  
\cmidrule[1pt](){1-3} 
Notation & \multicolumn{1}{c}{Description} & \multicolumn{1}{c}{Formula} \\[0.5ex]  
\cmidrule(){1-3} \\[-2.5ex]
$\oFnor$ & normal map & $\oFnor(z) := \nabla f(\prox{z}) + \frac{1}{\lambda}(z-\prox{z})$  \\[0.5ex]
$H$ & merit function & $H(\tau, z) := \psi(\oprox(z)) + \frac{\tau\lambda}{2}\|\oFnor(z)\|^2$ \\[0.5ex]
$\chi$ & criticality measure & $\chi(z) := \|\oFnor(z)\|$ \\[0.5ex]
$M$ & gen. derivative of $\oFnor$ & $M:=BD+(I-D)/\lambda$  \\[0.5ex]
$d_k$; $e_k$ & descent directions & $d_k := -\Fnor{z_k}$; $e_k:=q_k/\lambda - M_k q_k$ \\[0.5ex]
\cmidrule[1pt](){1-3} \\[-1.5ex]
\end{tabular}
\vspace{-2ex}

\begin{tabular}{p{1.6cm}p{9.43cm}}  
\cmidrule[1pt](){1-2} 
Notation & \multicolumn{1}{c}{Description} \\[0.5ex]  
\cmidrule(){1-2} \\[-2.5ex]
$B$; $D$ & (Approximation of) $\nabla^2 f$; gen. derivative of $\oprox$ \\[0.5ex]
$q_k$ & approximate solution of \eqref{eq3-2}---returned by the CG-method \\[0.5ex]
$\alpha$; $s_k(\alpha)$ & stepsize; linesearch direction at the $k$-th iteration \\[0.5ex]
$L_k$ & estimate of the Lipschitz constant (of $\nabla f$) at the $k$-th iteration \\[0.5ex]
$\eta_k$ & parameter in the gradient-related test \eqref{eq:cur} \\[0.5ex]
$\tau_k$; $\sigma$; $\rho$; $\nu_k$ & parameters in the linesearch condition \eqref{eq:armijo} \\[0.5ex]
\cmidrule[1pt](){1-2} \\[-1.5ex]
\end{tabular}
\caption{List of variables, parameters, and functions.}
\label{table:notation}
\end{table}

\subsection{Organization}
The rest of this paper is organized as follows.  In Section~\ref{sec:framework}, we first introduce some required preliminaries and then present our algorithmic framework. In Section~\ref{sec:convergence}, we analyze key convergence properties. In Section~\ref{sec:numerical}, we illustrate and discuss the numerical performance of our algorithm. Section~\ref{sec:conclusion} concludes this paper and we mention possible future directions.


\section{Algorithm Framework}\label{sec:framework}
In this section, we first introduce the first-order optimality conditions of the problem \eqref{eq1-1}. We then present the two main components of our algorithm, i.e., the semismooth Newton step and the linesearch globalization in Section~\ref{ssec:semistep} and \ref{ssec:linesearch}, respectively. The full algorithm framework is described in Algorithm~\ref{algo:main}.

\subsection{First-order Optimality and Preliminaries}\label{ssec:pre}
By $\mathrm{crit}(\psi) := \{x \in \Rn: 0 \in \partial \psi(x) = \nabla f(x) + \partial \vp(x)\}$, we denote the set of all stationary points of $\psi$. 
Here, $\partial\vp$ is the standard subdifferential of the convex function $\vp$. 
The condition $\bar x \in \mathrm{crit}(\psi)$ can be characterized as follows: 
\[ \bar x \in \mathrm{crit}(\psi) \quad \iff \quad \Fnat{\bar x } = \bar x  - \prox{\bar x  - \lambda \nabla f(\bar x )} = 0, \quad \lambda > 0. \]
In addition, if $\bar x$ is a stationary point of $\psi$, then $\bar z := \bar x -\lambda\nabla f(\bar x)$ is a zero of the normal map $\Fnor{\bar z} = 0$. Conversely, if $\bar z$ is a zero of the normal map, then we have $\bar x := \prox{\bar z} \in \crit(\psi)$, i.e., $\bar x$ is a stationary point.
We refer to \cite[Lemma 2.1]{ouyang2025trust} for further details. 

Following \cite{facchinei2007finite,ulbrich2011semismooth}, a mapping $F : \Rn \to \Rn$ is  semismooth at $x$ with respect to $\mathcal M : \Rn \rightrightarrows \R^{n\times n}$ if $F$ is Lipschitz continuous in a neighborhood of $x$, directionally differentiable at $x$, and 
\begin{equation*}
{\sup}_{M\in \mathcal M(x+h)}~\| F(x+h)-F(x)-Mh  \| = o(\|h\|) \quad \text{as} \quad h\rightarrow 0\footnote{We note that, in \cite{facchinei2007finite}, such a function $F$ is said to have a Newton approximation (with $\mathcal M$ being such an approximation).}.
\end{equation*}
The set-valued mapping $\mathcal M$ can be different from Clarke's subdifferential $\partial F$. If $f$ is twice continuously differentiable around $\prox{z}$ and $\proxs$ is semismooth at $z$, then $\oFnor$ is semismooth at $z$ with respect to $\mathcal M^\lambda(z) := \{M = \nabla^2f(\prox{z})D + (I-D)/\lambda: D \in \partial\prox{z}\}$, see \cite[Lemma 2.2]{ouyang2025trust}. Finally, we note that the matrices $D$ and $I-D$, $D \in \partial\prox{z}$, are symmetric and positive semidefinite for all $z \in \Rn$, cf. \cite[Lemma 3.3.5]{milzarek2016numerical}. 

\subsection{Semismooth Newton Step}\label{ssec:semistep}
To solve the nonsmooth equation $\oFnor(z)=0$, we consider semismooth Newton steps of the form:
\begin{align}\label{eq3-1}
    M_ks_k=-\Fnor{z_k}, \quad z_{k+1}=z_k+s_k, \quad M_k : = B_k D_k + (I-D_k)/\lambda,
    \end{align}
where $D_k \in \partial \prox{z_k}$ and $B_k$ is a symmetric matrix approximating the Hessian $ \nabla^2 f(\prox{z_k})$. 
Leveraging the symmetry of $D_k$ and $I-D_k$, we can modify the (typically non-symmetric) linear equation \eqref{eq3-1} by multiplying it with $D_k$
from the left. This leads to the related symmetric linear system:
\begin{align}\label{eq3-2}
D_k M_k \tilde q=-D_k\Fnor{z_k}.
\end{align}
Such modification has two advantages: First, since $D_kM_k$ is symmetric, we can use standard approaches to solve \eqref{eq3-2} inexactly, such as the CG method \cite{dembo1983truncated}. (See also Appendix~\ref{appen:prooflemma} for an explicit reference to the CG method used in our approach). Furthermore, it is often possible to exploit the structure of the generalized derivative $ D_k$ to reduce the dimension of the linear system \eqref{eq3-2}. 
%

%
%

In the next lemma, we describe the mentioned advantages in more detail. We refer to Appendix~\ref{appen:prooflemma} for a proof of Lemma~\ref{lemma3-8}.
\begin{lemma}
    \label{lemma3-8} Let $M = BD+(I-D)/\lambda$ be given and let $B, D \in \R^{n \times n}$ be symmetric. Suppose that the CG method \cite{dembo1983truncated} is run with tolerance parameter $\epsilon \geq 0$ to solve $D M \tilde q= -D\Fnor{z}$ and define $m = \mathrm{dim}~\mathrm{range}(D)$. Then, it holds that:
    \begin{itemize} 
    \item[\rmn{(i)}] The CG method stops after at most $m\leq n$ iterations.
    \item[\rmn{(ii)}] Assume that $D M$ is positive semidefinite and $M$ is invertible. The CG method then returns $\tilde q$ with $\|D (M \tilde q + \Fnor{z})\| \leq \epsilon$.
    \item[\rmn{(iii)}] Assume $\tilde q$ satisfies the condition $\|D(M \tilde q + \oFnor(z))\| \leq \epsilon$. Then, setting $s = \tilde q - \lambda (M \tilde q +  \oFnor(z))$, it follows $\|Ms +  \oFnor(z)\| \leq \|I-\lambda B\| \epsilon$. 
    \end{itemize}
\end{lemma}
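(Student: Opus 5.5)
The plan is to treat the three parts separately, using only the symmetry of $B$ and $D$, the structure $DM = DBD + D(I-D)/\lambda$, and standard Krylov subspace properties of CG as described in \cite{dembo1983truncated}.

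\textbf{Part (i).} I will first check that $DM$ is symmetric. Since $D$ and $I-D$ commute and are symmetric, $D(I-D)$ is symmetric, and $DBD$ is symmetric as well. Next, every CG iterate lies in the Krylov space generated by $DM$ and the initial residual $r_0 = -D\Fnor{z}$ (starting from $\tilde q_0 = 0$). Both $r_0$ and the range of $DM$ are contained in $\mathrm{range}(D)$, so all residuals $r_j$ and search directions $p_j$ lie in $\mathrm{range}(D)$. CG residuals are mutually orthogonal as long as the method has not stopped. Hence at most $m$ nonzero residuals can be produced, and the residual vanishes (or the method terminates) after at most $m$ iterations. The truncated CG of \cite{dembo1983truncated} also stops early on nonpositive curvature $p_j^\top DM p_j \le 0$, but this only shortens the run. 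The one point to handle carefully is that orthogonality holds among residuals generated before any breakdown; I expect this bookkeeping to be the main technical care point of the argument.

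\textbf{Part (ii).} Under positive semidefiniteness, I need to rule out the curvature exit $p_j^\top DMp_j \le 0$ with $p_j \neq 0$. Suppose $p^\top DMp = 0$ for some $p \in \mathrm{range}(D)$. Since $DM$ is symmetric positive semidefinite, this gives $DMp = 0$. Writing $p = Dw$, we get $DMDw = 0$. I then use the identity $MD = DM$ restricted suitably, or more directly consider $p^\top DM p = p^\top D B D p + p^\top D(I-D)p/\lambda$. The alternative I would pursue is to show that $DM$ restricted to $\mathrm{range}(D)$ is injective: if $DMp = 0$ with $p \in \mathrm{range}(D)$, then $Mp \in \ker D$. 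Combined with $Mp = BDp + (I-D)p/\lambda$ and the invertibility of $M$, this should force $p = 0$, for instance via the decomposition into $\ker D$ and $\mathrm{range}(D)$, which are orthogonal since $D$ is symmetric. I expect this step to be the main obstacle and may need the precise CG variant's stopping rule. Once the restricted operator is known to be positive definite, CG on $\mathrm{range}(D)$ is standard and reduces the residual until $\|r_j\| = \|D(M\tilde q + \Fnor{z})\| \le \epsilon$, which is reached within $m$ steps by (i).

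\textbf{Part (iii).} This is a direct computation. Set $r := M\tilde q + \Fnor{z}$, so that $s = \tilde q - \lambda r$. Then
\[
Ms + \Fnor{z} = r - \lambda M r = r - \lambda BDr - (I-D)r = Dr - \lambda BDr = (I-\lambda B)Dr .
\]
Taking norms gives $\|Ms + \Fnor{z}\| \le \|I-\lambda B\|\,\|Dr\| \le \|I-\lambda B\|\,\epsilon$, which is the claimed bound.
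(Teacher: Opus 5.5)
Your parts (i) and (iii) are fine. Counting mutually orthogonal nonzero residuals inside $\mathrm{range}(D)$ is an equivalent variant of the paper's argument via $DM$-conjugacy of $p_0,\dots,p_{m-1}$. The identity $Ms+\Fnor{z}=(I-\lambda B)D(M\tilde q+\Fnor{z})$ is exactly what (iii) needs.

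The gap is in (ii), at the step you flag yourself. You assert that $DMp=0$ with $p\in\mathrm{range}(D)$ ``should force $p=0$'', but you never prove it. The identity $MD=DM$ you propose to use is false in general: $MD=BD^2+(I-D)D/\lambda$, whereas $DM=DBD+D(I-D)/\lambda$, and $BD^2\neq DBD$ for generic symmetric $B,D$. The orthogonal splitting $\mathbb{R}^n=\ker D\oplus\mathrm{range}(D)$ does not by itself bring the invertibility of $M$ into play either. Note that $D$ is in general not a projection (e.g.\ for the group penalty), so $(I-D)p$ need not lie in $\ker D$. As written, (ii) is therefore not established.

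The missing observation is one line. Since $D$ is symmetric and you already checked that $DM$ is symmetric, $DM=(DM)^\top=M^\top D$. Directly, $M^\top D=DBD+(I-D)D/\lambda=DM$, because $D$ and $I-D$ commute. Hence:
\begin{itemize}
\item[(a)] $DMp=0$ gives $M^\top Dp=0$;
\item[(b)] so $Dp=0$ by invertibility of $M$;
\item[(c)] so $p\in\mathrm{range}(D)\cap\ker D=\{0\}$, because $\ker D=\mathrm{range}(D)^\perp$.
\end{itemize}
Together with positive semidefiniteness, this makes $DM$ positive definite on $\mathrm{range}(D)$.

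You should then also state why the curvature exit cannot fire. The CG recurrences give $|\iprod{r_j}{p_j}|=\|r_j\|^2$, so $r_j\neq0$ forces $p_j\neq0$ and hence $\iprod{p_j}{DMp_j}>0$. If instead $r_j=0$, the residual test would already have stopped the method, since $\epsilon\ge0$. Combined with (i), CG then stops via the residual test with $\|D(M\tilde q+\Fnor{z})\|\le\epsilon$.

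This is exactly the paper's route. It derives $Dp_i=0$ from $\iprod{p_i}{DMp_i}\le0$ using $DM=M^\top D$, and then concludes $\|r_i\|^2=-\iprod{r_0}{p_i}=-\iprod{\Fnor{z}}{Dp_i}=0$.
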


Considering the linear system $DM\tilde q = -D\Fnor{z}$ and as discussed in \cite[Section 2]{dembo1983truncated}, the CG method either stops if non-positive curvature, $\iprod{p_i}{DMp_i} \leq 0$, is detected or if the tolerance condition $\|D(M q_i + \oFnor(z))\| \leq \epsilon$ is satisfied at the CG iterates $p_i, q_i$, cf.\ Algorithm~\ref{algo:CG} in Appendix~\ref{appen:prooflemma}. Lemma~\ref{lemma3-8}~(i) states that such a termination will occur within at most $m = \mathrm{dim}~\mathrm{range}(D)$ iterations. Moreover, Lemma~\ref{lemma3-8}~(ii) emphasizes one situation ensuring that the linear system is consistent and the CG method stops with the tolerance condition being satisfied. Lemma~\ref{lemma3-8} (iii) implies that solutions to the original linear system \eqref{eq3-1} can be computed by solving the (potentially reduced) symmetric system \eqref{eq3-2}.

We note that the natural residual-based equation $\Fnat{x} = 0$ can be solved in a similar fashion yielding the Newton system $\widetilde M_k s_k = -\Fnat{x_k}$, where $ \widetilde M_k=I+D_k(I-\lambda B_k)$ and $D_k \in \partial \prox{x_k-\lambda \nabla f(x_k)}$. In contrast to the normal map-based perspective adopted in this work, this system cannot be directly converted into a symmetric linear system. In particular, an analogue of Lemma~\ref{lemma3-8} (iii) does not seem to be available in this case.

\subsection{Linesearch Globalization}\label{ssec:linesearch}
The key novelty of our linesearch framework lies in a gradient-related test and a special choice of the search directions. 
Specifically, at the $k$-th iteration, let $q_k$ be an approximate solution of the subproblem \eqref{eq3-2} returned by the CG method and set
$$d_k=-F_{\text{nor}}^{\lambda}(z_k),\quad  e_k= q_k/\lambda -M_k q_k.$$
Invoking Lemma~\ref{lemma3-8} (iii), the direction $\lambda (d_k + e_k)$ is an approximate solution of the original linear system \eqref{eq3-1} that  contains second-order information (from $M_k$). We can decompose $\lambda (d_k + e_k)$ into the first-order direction $\lambda d_k$ and the remainder term $\lambda e_k$. We then conduct the following gradient-related test:
\begin{equation}\label{eq:cur}
    \|e_k\| \le \|\Fnor{z_k}\|/\eta_k,
\end{equation}
where $\eta_k = \min\{ b_k\|\Fnor{z_k}\|^q, \eta\}$ with $b_k = (k \ln^2(k+1))^{q}$ and $q,\eta>0$. We set $\texttt{flag}= \texttt{SO}$ if it is passed and $\texttt{flag}= \texttt{FO}$ otherwise.
The special choice of $\eta_k$ helps to ensure the transition from global to local convergence. Based on \texttt{flag}, we then select the linesearch direction as follows
\begin{equation*} 
    s_k(\alpha) = \begin{cases}
        \alpha \lambda d_k & \text{if $\texttt{flag}= \texttt{FO}$},\\  \alpha \lambda (d_k + \alpha  e_k) & \text{if $\texttt{flag}= \texttt{SO}$},\\
    \end{cases}
\end{equation*}
where $\alpha$ is the stepsize that will be determined in the linesearch procedure. Notably, we scale $e_k$ with $\alpha^2$ when $\texttt{flag}= \texttt{SO}$, which can be interpreted as a \textit{second-order damped backtracking}. The gradient-related test and the special choice of $s_k(\alpha)$ allow us to recover all of the theoretical convergence results in \cite{ouyang2025trust} with a simpler analysis and under fewer assumptions. Finally, let us note that tests akin to \eqref{eq:cur} have already appeared (as assumptions) in other nonsmooth second-order approaches; cf. \cite[Sections 3.2 and 4.1]{SteThePat17}.

We now introduce the backtracking linesearch. Let us set $x_k =\prox{z_k}$ and $p_\alpha=\prox{z_k + s_k(\alpha)}$. At every inner $t$-th iteration ($t$ is initialized at $t=0$), we 
compute $\alpha=\rho^t$,
\begin{equation}\label{eq:Lk}
    \begin{aligned}
        L_k^{(t)} &= \begin{cases}
               \max\{\frac{2U_\alpha}{V_\alpha^2} , \frac{W_\alpha}{V_\alpha} \} &  \text{if } V_\alpha \neq 0,\\
               \bar L & \text{otherwise},
        \end{cases} \quad \tau_k^{(t)} = \min\Big\{  \frac{2\gamma(1-\nu_k)}{(L_k^{(t)})^2 \lambda^2 +2}, \tau_{k-1}\Big\}.
    \end{aligned}
\end{equation}
Here, we set $\bar L>0$ (to ensure the well-definedness of $L_k^{(t)}$ when $V_\alpha =0$), $0<\nu_k \le \nu$,  $\gamma, \rho, \nu \in (0,1)$, and 
\begin{equation*}
    \begin{aligned}
       U_\alpha &= f(p_\alpha)-f(x_k)-\iprod{\nabla f(x_k)}{p_\alpha-x_k},\\
       V_\alpha &= \|p_\alpha -x_k\|,\quad W_\alpha = \|\nabla f(p_\alpha) - \nabla f(x_k)\|.
    \end{aligned}
\end{equation*}
This estimation strategy is inspired by the heuristic adaptive scheme developed in \cite[Section 7.3]{ouyang2025trust}. Note that no theoretical results are provided in \cite{ouyang2025trust} for this mechanism. In this work, we fully incorporate \eqref{eq:Lk} in our algorithmic scheme and establish comprehensive theoretical guarantees for the proposed strategy.
We then check if the stepsize $\alpha$ satisfies the Armijo-type condition 
\begin{equation}\label{eq:armijo}
    H(\tau_k^{(t)},z_k + s_k(\alpha)) - H(\tau_k^{(t)}, z_k) \le -\frac{\sigma \lambda \tau_k^{(t)} \alpha}{2}\|\Fnor{z_k}\|^2- \frac{\nu_k}{\lambda \alpha} \|p_\alpha - x_k\|^2,
\end{equation}
where $\sigma \in (0,1)$ and
\begin{equation*} 
    H(\tau, z):=\psi(\prox{z})+\frac{\tau\lambda}{2}\|\Fnor{z}\|^2.
\end{equation*}
We return $\alpha_k=\alpha$ and $\tau_k=\tau_k^{(t)}$ if \eqref{eq:armijo} is passed and move to the next inner iteration if \eqref{eq:armijo} fails to hold.
The merit function $H(\tau,z)$ is adapted from \cite[Definition 3.2]{ouyang2025trust}. In \cite{ouyang2025trust}, the parameter $\tau$ is a predefined constant that is related to the global Lipschitz constant $L$ of $\nabla f$. By contrast, we estimate $L_k^{(t)}$ and $\tau_k^{(t)}$ in an adaptive way. In Section~\ref{ssec:local}, we show that both the gradient-related test \eqref{eq:cur} and the Armijo-type condition \eqref{eq:armijo} will be passed eventually with stepsize $\alpha_k = 1$ under suitable local assumptions. As a result, the step $s_k(\alpha_k) = s_k(1)$ will coincide with the second-order direction $\lambda(d_k+e_k)$ and the proposed algorithm locally reduces to a truncated semismooth Newton method. 

The following lemma states a descent property of the merit function $H(\tau,z)$, which will be frequently used in our analysis.

\begin{lemma}\label{lemma:des}
    Let $z, e \in \Rn$, $\alpha \in (0,1]$ and $\gamma, \nu \in (0,1)$ be given and set $d:=-\Fnor{z}$ and $x := \prox{z}$. Suppose there exists $L = L(\alpha) >0$ such that
    \begin{align*}
        f(p_\alpha) \le f(x) + \iprod{\nabla f(x)}{p_\alpha-x} + \frac{L}{2}\|p_\alpha-x\|^2,\quad  \|\nabla f(p_\alpha) -\nabla f(x) \| \le L\|p_\alpha-x\|,
    \end{align*}
    where $p_\alpha = \prox{z+\alpha\lambda(d+\alpha e)}$. If $\tau \leq\frac{2\gamma (1-\nu)}{{L}^2 {\lambda}^2 +2}$ and $\alpha \leq \min\{1, \frac{2(1-\nu)(1-\gamma)}{(1+2\tau)L\lambda+\tau}\}$, it then holds that
        \begin{align*} 
            H(\tau, z+\alpha\lambda (d + \alpha e))-H(\tau, z) & \leq -\frac{\tau\lambda\alpha}{2} \|\Fnor{z}\|^2 - \frac{\nu}{\lambda\alpha} \|p_\alpha - x\|^2 + \frac{\tau\lambda\alpha}{2} \|\alpha e\|^2 \\ & \hspace{4ex} +\left[L\tau\lambda  + 1 - \tau \right] \lambda\alpha \|\alpha e\| \|d+\alpha e\|. 
            \end{align*} 
\end{lemma}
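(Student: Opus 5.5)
The plan is a direct expansion of both parts of $H$ along the step $s := \alpha\lambda(d+\alpha e)$. Set $z_+ := z+s$ and $p := p_\alpha = \prox{z_+}$, and write $\Delta := p - x$. The natural tools are the two proximal optimality conditions
\[ \lambda^{-1}(z-x) \in \partial\vp(x), \qquad \lambda^{-1}(z_+-p) \in \partial\vp(p), \]
the firm nonexpansiveness of $\oprox$, which gives $\iprod{\Delta}{s} \ge \|\Delta\|^2$, and the two $L$-inequalities in the hypothesis. The goal is to obtain an upper bound consisting of a negative multiple of $\|d\|^2$, a negative multiple of $\|\Delta\|^2/(\lambda\alpha)$, and remainder terms involving $\alpha e$. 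The step-size and $\tau$ conditions will then absorb everything except the stated $e$-terms.

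\emph{Step 1 (the $\psi$-part).} Use the descent inequality for $f$ and the subgradient inequality $\vp(x) \ge \vp(p) + \lambda^{-1}\iprod{z_+-p}{x-p}$. Substituting $\nabla f(x) = -d - \lambda^{-1}(z-x)$ gives
\[ \psi(p)-\psi(x) \le \iprod{-(1-\alpha)d + \alpha^2 e}{\Delta} - \Big(\tfrac1\lambda - \tfrac L2\Big)\|\Delta\|^2 . \]
Firm nonexpansiveness yields $\iprod{d}{\Delta} \ge \|\Delta\|^2/(\alpha\lambda) - \alpha\iprod{e}{\Delta}$. This converts the cross term $-(1-\alpha)\iprod{d}{\Delta}$ into $-\tfrac{1-\alpha}{\alpha\lambda}\|\Delta\|^2$ plus an $e$-term. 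This is the source of the $\frac{1}{\lambda\alpha}\|\Delta\|^2$ scaling. Combined with the $-\lambda^{-1}\|\Delta\|^2$ already present, this gives roughly $-\tfrac{1}{\alpha\lambda}\|\Delta\|^2 + \tfrac L2\|\Delta\|^2$, plus terms of the form $\iprod{\alpha e}{\Delta}$.

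\emph{Step 2 (the normal map part).} From $\Fnor{z_+} = \nabla f(p) + \lambda^{-1}(z_+-p)$ and $s/\lambda = \alpha(d+\alpha e)$, we get
\[ \Fnor{z_+} = -(1-\alpha)d + \alpha^2 e + w, \qquad w := \nabla f(p)-\nabla f(x) - \lambda^{-1}\Delta, \]
where $\|w\| \le (L+\lambda^{-1})\|\Delta\|$ by the Lipschitz-type hypothesis. Expand $\frac{\tau\lambda}{2}\big(\|\Fnor{z_+}\|^2 - \|d\|^2\big)$ using $(1-\alpha)^2 - 1 \le -\alpha$. This produces the leading term $-\frac{\tau\lambda\alpha}{2}\|d\|^2$, the term $\frac{\tau\lambda\alpha}{2}\|\alpha e\|^2$ (after using $\alpha \le 1$), and cross terms of $w$ with $d$ and with $\alpha e$. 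The cross term with $d$ is treated again via $\iprod{d}{\Delta}$ and Young's inequality, with $\|w\|^2 \le 2(L^2+\lambda^{-2})\|\Delta\|^2$. This yields a penalty of order $\tau(L^2\lambda^2+2)\|\Delta\|^2/\lambda$. The choice $\tau \le 2\gamma(1-\nu)/(L^2\lambda^2+2)$ is designed so that this penalty uses at most a $\gamma(1-\nu)$-fraction of the negative $\|\Delta\|^2/(\lambda\alpha)$ budget.

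\emph{Step 3 (collecting terms).} Every remaining cross term involving $\alpha e$ has the form $\iprod{\alpha e}{\cdot}$ against $\Delta$, $w$, or $d+\alpha e$. Bound each by Cauchy--Schwarz together with $\|\Delta\| \le \|s\| = \lambda\alpha\|d+\alpha e\|$ and $\|w\| \le (L\lambda+1)\alpha\|d+\alpha e\|$. This produces exactly the $[L\tau\lambda + 1 - \tau]\lambda\alpha\|\alpha e\|\|d+\alpha e\|$ term; the coefficients $1$ from $\psi$, $L\tau\lambda$ from $w$, and $-\tau$ from the $\lambda^{-1}\Delta$ part of $w$ partially cancel. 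The leftover positive $\|\Delta\|^2$ terms, namely $\frac L2\|\Delta\|^2$ from $f$ and $\tau$-multiples of $(L+\lambda^{-1})\|\Delta\|^2$, are of size $\mathcal O\big(((1+2\tau)L\lambda+\tau)/\lambda\big)\|\Delta\|^2$. The bound $\alpha \le \frac{2(1-\nu)(1-\gamma)}{(1+2\tau)L\lambda+\tau}$ is what lets them be dominated by the remaining $(1-\nu)(1-\gamma)$-fraction of $\|\Delta\|^2/(\lambda\alpha)$. That leaves exactly $-\frac{\nu}{\lambda\alpha}\|\Delta\|^2$.

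The main obstacle is the bookkeeping in Steps 2–3. One has to split the cross terms between $w$, $d$, and $\alpha e$ so that the $e$-contributions come out \emph{exactly} as $\frac{\tau\lambda\alpha}{2}\|\alpha e\|^2 + [L\tau\lambda+1-\tau]\lambda\alpha\|\alpha e\|\|d+\alpha e\|$. I would first try to write everything in terms of $d+\alpha e$ rather than $d$; for example, $-(1-\alpha)d + \alpha^2 e = -(d+\alpha e) + \alpha(d+\alpha e) + \ldots$. I would also apply firm nonexpansiveness in the form $\iprod{\Delta}{d+\alpha e} \ge \|\Delta\|^2/(\alpha\lambda)$, and isolate the $\alpha e$ pieces only at the end. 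If the constants do not match on the first pass, I would tune the Young's inequality weights to the $\gamma$/$(1-\gamma)$ split dictated by the two hypotheses.
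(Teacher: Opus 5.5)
\textbf{There is a genuine gap: your Step 1 throws away something you need later.} When you replace $\iprod{d}{\Delta}$ by its firm-nonexpansiveness lower bound, you discard the nonnegative slack $\frac{1-\alpha}{\alpha\lambda}\iprod{\Delta}{s-\Delta}$, where $s-\Delta=(z+s-p_\alpha)-(z-x)$. After that, the claimed inequality is out of reach no matter how you estimate the normal-map part.

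Here is a concrete example. Take $n=1$, $f(x)=-\frac L2x^2$, $\varphi(x)=\frac L2x^2+bx$, $e=0$ and $L\lambda=2$.
\begin{itemize}
\item Then $\prox{z}=(z-\lambda b)/3$ and $\Fnor{\cdot}\equiv b$, so the $\tau$-part of $H$ does not change at all, and $\Delta=-\alpha\lambda b/3$.
\item Your Step 1 bound is $(-\frac1{\alpha\lambda}+\frac L2)\Delta^2=-(1-\alpha)\frac{\alpha\lambda b^2}{9}$.
\item For the largest admissible $\tau=\gamma(1-\nu)/3$, the right-hand side of the lemma equals $-(1.5\gamma(1-\nu)+\nu)\frac{\alpha\lambda b^2}{9}$.
\item For $\gamma=0.9$, $\nu=0.1$ this lies strictly below your bound for every $\alpha\in(0,1]$.
\end{itemize}
The lemma itself holds here: the true change of $H$ is $-3\frac{\alpha\lambda b^2}{9}$.

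Step 2 has the same defect. After dropping $-\frac{\tau\lambda\alpha(1-\alpha)}{2}\|d\|^2$ via $(1-\alpha)^2-1\le-\alpha$, the cross term $-\tau\lambda(1-\alpha)\iprod{d}{w}$ is of size $\|d\|\|\Delta\|$. It cannot be dominated by $\|\Delta\|^2/(\lambda\alpha)$, since $\|\Delta\|$ may be much smaller than $\lambda\alpha\|d\|$. The natural remedy $\|{-(1-\alpha)d}+w\|^2\le(1-\alpha)\|d\|^2+\alpha^{-1}\|w\|^2$ yields $\frac{\tau(L\lambda+1)^2}{2\alpha\lambda}\|\Delta\|^2$. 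Your bound $\|w\|^2\le2(L^2+\lambda^{-2})\|\Delta\|^2$ yields $\frac{\tau(2L^2\lambda^2+2)}{2\alpha\lambda}\|\Delta\|^2$. Neither is the $\frac{\tau(L^2\lambda^2+2)}{2\alpha\lambda}\|\Delta\|^2$ that the hypothesis on $\tau$ can absorb, and no choice of Young weights repairs this.

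\textbf{The missing idea is to spend the firm-nonexpansiveness slack against the normal-map terms.} Set $u:=d+\alpha e$ (so $s=\alpha\lambda u$), $g:=\nabla f(p_\alpha)-\nabla f(x)$ and $w=g-\Delta/\lambda$. Write $\Fnor{z+s}=-(1-\alpha)u+\alpha e+w$, with coefficient $1$ on $\alpha e$. Then exactly
\begin{align*}
\frac{\tau\lambda}{2}\big(\|\Fnor{z+s}\|^2-\|d\|^2\big) &= -\frac{\tau\lambda\alpha}{2}\|d\|^2+\frac{\tau\lambda\alpha}{2}\|\alpha e\|^2-\frac{\tau\lambda\alpha(1-\alpha)}{2}\|u\|^2 \\
&\quad -\tau\lambda(1-\alpha)\iprod{u}{w}+\frac{\tau\lambda}{2}\|w\|^2+\tau\lambda\iprod{\alpha e}{w}.
\end{align*}
Your Step 1, before firm nonexpansiveness is used, reads $\psi(p_\alpha)-\psi(x)\le\iprod{-(1-\alpha)u+\alpha e}{\Delta}-(\frac1\lambda-\frac L2)\|\Delta\|^2$. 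From here:
\begin{itemize}
\item Merge $-(1-\alpha)\iprod{u}{\Delta}$ from $\psi$ with $+\tau(1-\alpha)\iprod{u}{\Delta}$ from the $-\Delta/\lambda$ part of $w$. Only then apply firm nonexpansiveness: since $\tau<1$, $-(1-\tau)(1-\alpha)\iprod{u}{\Delta}\le-\frac{(1-\tau)(1-\alpha)}{\alpha\lambda}\|\Delta\|^2$.
\item Absorb $-\tau\lambda(1-\alpha)\iprod{u}{g}$ into the retained $\|u\|^2$-term by Young, leaving $\frac{\tau\lambda(1-\alpha)}{2\alpha}\|g\|^2$.
\item Use $-\tau\iprod{g}{\Delta}\le\tau L\|\Delta\|^2$ inside $\frac{\tau\lambda}{2}\|w\|^2$.
\end{itemize}
The coefficient of $\|\Delta\|^2$ then becomes
\[
-\frac1{\alpha\lambda}+\frac{\tau(L^2\lambda^2+2)}{2\alpha\lambda}+\frac{(1+2\tau)L\lambda-\tau}{2\lambda}\le-\frac{\nu}{\alpha\lambda}
\]
under the two hypotheses. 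The $e$-terms reduce to $\frac{\tau\lambda\alpha}{2}\|\alpha e\|^2+(1-\tau)\iprod{\alpha e}{\Delta}+\tau\lambda\iprod{\alpha e}{g}$. Via $\|\Delta\|\le\lambda\alpha\|u\|$, this gives exactly the stated remainder.

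For comparison, the paper does not redo this computation. It quotes Lemma~4.3 of the trust-region paper with $e$ replaced by $\alpha e$, and checks that the constant $C(\alpha)$ there is nonpositive under the two hypotheses.
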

\begin{proof}
The assumptions on $\tau$ and $\alpha$ ensure that the quantity $C(\alpha)$ in \cite[Lemma 4.3]{ouyang2025trust} is non-positive. The assertation now follows from \cite[Lemma 4.3]{ouyang2025trust} by replacing $e$ with $\alpha e$.
\end{proof}

We end this section by comparing our mechanism to generate $s_k(\alpha)$ and the well-known dogleg procedure for the trust region method, cf.\ \cite[Chapter 4.1]{NocWri06}. The dogleg method chooses a direction $\tilde s$ by minimizing a (quadratic) model of the objective function along the path $\tau \mapsto \tilde s(\tau)$, 
\begin{equation*}
	   \tilde{s}(\tau) := \begin{cases}\tau p^{U} & 0 \leq \tau \leq 1, \\ p^{U}+(\tau-1)\left(p^{B}-p^{U}\right) & 1 \leq \tau \leq 2, \end{cases}
\end{equation*}
subject to the trust-region constraints. Here, $p^{U}$ and $p^{B}$ represent first- and second-order directions, where $p^{U}=-\frac{g^{\top}g}{g^{\top}Bg}g$ and $g$ and $B$ denote the respective gradient and (approximate) Hessian information.
In fact, $\lambda e_k$ and $\alpha$ in $s_k(\alpha)$ play a similar role to $p^{B}-p^{U}$ and $\tau-1$. However, the overall scaling in $s_k(\alpha)$ in terms of the stepsize $\alpha$ appears to be different and we use a linesearch globalization to determine $\alpha$. We refer to \cite{NocWri06,steihaug1983conjugate} for further details.

\begin{algorithm}[t]
    \caption{A Linesearch Normal Map Semismooth Newton Method}
    \label{algo:main}
    \begin{algorithmic}[1]
        \Require Select $z_0\in\mathbb{R}^n$, $B_0 \in \mathbb{S}^n$, $\bar L, \lambda, \tau_{-1} > 0$, $\sigma, \rho, \gamma \in (0,1)$ and $\{\epsilon_k\} \subset \R_{+}$. Set $k=0$. 
        \While{$F_{\text{nor}}^{\lambda}(z_k)\neq0$}
        \State Choose $D_k\in\partial\text{prox}_{\lambda\varphi}(z_k)$ and set $M_k=B_kD_k+\frac{1}{\lambda}(I-D_k)$.
        \State Run CG to solve \eqref{eq3-2} with tolerance $\epsilon = \epsilon_k \geq 0$ returning $q_k$;
        \State Compute $d_k=- F_{\text{nor}}^{\lambda}(z_k)$, $e_k=q_k/\lambda- M_k q_k$ and choose \texttt{flag} as in \eqref{eq:cur};
        \State Choose the maximal stepsize  $\alpha_k=\{1,\rho,\rho^2, \dots\}$ using Algorithm~\ref{algo:ls};
        \State Set $z_{k+1} = z_k + s_k(\alpha_k)$ and choose $B_{k+1} \approx \nabla^2 f(\prox{z_{k+1}})$;
        \State $k\gets k+1$;
        \EndWhile
    \end{algorithmic}
\end{algorithm}

\begin{algorithm}[t]
    \caption{Backtracking Linesearch}
    \label{algo:ls}
    \begin{algorithmic}[1]
        \Require $z_k, d_k, e_k\in\mathbb{R}^n$,  $\bar L, \lambda, \tau_{k-1}>0$, $\sigma, \rho, \gamma\in (0,1)$. Set $t=0$. 
        \For{$t=0,1, \dots$}
        \State Compute $\alpha$, $L_k^{(t)}$, $\tau_{k}^{(t)}$ as in \eqref{eq:Lk}.
        \If{\eqref{eq:armijo}  holds for $\alpha$}
        \State \Return $\alpha_k =\alpha$, $\tau_k = \tau_k^{(t)}$, $L_k = L_k^{(t)}$.
        \EndIf
        \EndFor
    \end{algorithmic}
\end{algorithm}


\section{Convergence Analysis}\label{sec:convergence}
In this section, we present global convergence results for Algorithm~\ref{algo:main} and
convergence of the iterates under the Kurdyka-{\L}ojasiewicz (KL) property. In addition, we derive the locally fast, q-superlinear rate of convergence. Our assumptions and proof techniques are inspired by the analysis in \cite{ouyang2025trust} and in the extended manuscript \cite{OuyMil21}. 
For ease of exposition, let us define $\chi(z) := \|\Fnor{z}\|$ and recall that $x_k=\prox{z_k}$.

\subsection{Global Convergence} \label{ssec:global}
Based on the linesearch mechanism, we derive a sufficient condition for \eqref{eq:armijo}.

\begin{lemma}\label{lemma:welldef}
Let $\{z_k\}$ be generated by Algorithm~\ref{algo:main} and consider an iteration $k \in \N$ with $\chi(z_k) \neq 0$. Then, \eqref{eq:armijo} is satisfied if, at some $t$-th inner iteration during the linesearch, it holds that
\[    \alpha_k \le \min\left\{1, \frac{2(1-\gamma)(1-\nu_k)}{(1+2\tau_k^{(t)})L_k^{(t)}\lambda +\tau_k^{(t)}}, \sqrt{\frac{1-\sigma}{2}}\eta_k, \frac{(1-\sigma)\eta_k\tau_k^{(t)}}{4(L_k^{(t)} \tau_k^{(t)} \lambda +1)(1+1/\eta_k)}\right\}. \]
\end{lemma}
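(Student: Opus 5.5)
The plan is to reduce everything to Lemma~\ref{lemma:des}, applied with $z = z_k$, $\alpha = \alpha_k$ (the current trial stepsize $\rho^t$), $L = L_k^{(t)}$, $\tau = \tau_k^{(t)}$ and $\nu = \nu_k$.

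\textbf{Step 1: hypotheses of Lemma~\ref{lemma:des}.} By construction in \eqref{eq:Lk}, if $V_\alpha \neq 0$ then $L_k^{(t)} \ge 2U_\alpha/V_\alpha^2$ and $L_k^{(t)} \ge W_\alpha/V_\alpha$. These are exactly the two inequalities required in Lemma~\ref{lemma:des}. If $V_\alpha = 0$, then $p_\alpha = x_k$, so $U_\alpha = W_\alpha = 0$ and both inequalities hold trivially with $L = \bar L$. The definition of $\tau_k^{(t)}$ gives $\tau_k^{(t)} \le 2\gamma(1-\nu_k)/((L_k^{(t)})^2\lambda^2+2)$, and the first two terms in the assumed minimum give the required bound on $\alpha$.

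\textbf{Step 2: the first-order case.} If $\texttt{flag}=\texttt{FO}$, we apply the lemma with $e=0$. The error terms vanish, and we obtain \eqref{eq:armijo} directly, since $\sigma<1$.

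\textbf{Step 3: the second-order case.} If $\texttt{flag}=\texttt{SO}$, we take $e = e_k$ and use \eqref{eq:cur}, i.e.\ $\|e_k\| \le \chi(z_k)/\eta_k$. It then suffices to absorb the two error terms into the slack $(1-\sigma)\tau\lambda\alpha\chi^2/2$, splitting it into two halves of $(1-\sigma)\tau\lambda\alpha\chi^2/4$ each.
\begin{itemize}
\item For the first term, $\frac{\tau\lambda\alpha}{2}\alpha^2\|e_k\|^2 \le \frac{\tau\lambda\alpha}{2}\,\alpha^2\chi^2/\eta_k^2$. This is at most $\frac{(1-\sigma)\tau\lambda\alpha}{4}\chi^2$ as soon as $\alpha^2 \le (1-\sigma)\eta_k^2/2$, which is the third entry of the minimum.
\item For the second term, we use $\|d+\alpha e\| \le \chi(1+\alpha/\eta_k) \le \chi(1+1/\eta_k)$ and $1-\tau \le 1$. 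This gives the bound $(L\tau\lambda+1)\lambda\alpha\cdot\alpha\chi\eta_k^{-1}\cdot\chi(1+1/\eta_k)$. It is at most $\frac{(1-\sigma)\tau\lambda\alpha}{4}\chi^2$ provided $\alpha \le (1-\sigma)\eta_k\tau/(4(L\tau\lambda+1)(1+1/\eta_k))$, which is the fourth entry.
\end{itemize}
Summing the two estimates yields \eqref{eq:armijo}.

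The main obstacle is Step~1: checking that the adaptive $L_k^{(t)}$ genuinely certifies the two inequalities at $p_\alpha$, including the degenerate case $V_\alpha = 0$. One should also make sure that the $\tau_{k-1}$ cap in $\tau_k^{(t)}$ only decreases $\tau$, and hence does not break the step bounds. This is fine because the bounds involving $\tau$ in the minimum are stated in terms of $\tau_k^{(t)}$ itself. The remaining steps are routine bookkeeping.
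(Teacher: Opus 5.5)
Your proposal is correct and follows the paper's proof essentially step for step. The paper likewise invokes Lemma~\ref{lemma:des} with $L=L_k^{(t)}$ and $\tau=\tau_k^{(t)}$, handles the \texttt{FO} case with $e=0$, and in the \texttt{SO} case bounds each error term by $\frac{(1-\sigma)\tau_k^{(t)}\lambda\alpha}{4}\chi(z_k)^2$ using \eqref{eq:cur} and the last two entries of the minimum; your explicit check of the hypotheses of Lemma~\ref{lemma:des}, including the degenerate case $V_\alpha=0$, just spells out what the paper states in one line.
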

\begin{proof}
    Due to the specific ways of how $L_k^{(t)}$ and $\tau_k^{(t)}$ are computed in \eqref{eq:Lk}, Lemma~\ref{lemma:des} is applicable. In particular, if $\alpha \le \bar \alpha_k^{(t)} := \min\{1,\frac{2(1-\gamma)(1-\nu_k)}{(1+2\tau_k^{(t)})L_k^{(t)}\lambda+\tau_k^{(t)}}\}$, it holds that
        \begin{align}
            \nonumber H(\tau_k^{(t)}, z_k+\alpha \lambda (d_k +\alpha e_k))-H(\tau_k^{(t)}, z_k) & \leq -\frac{\tau_k^{(t)}\lambda\alpha}{2} \chi(z_k)^2 - \frac{\nu_k}{\lambda\alpha} \|p_\alpha - x_k\|^2 \\  & \hspace{-32ex}+\frac{\tau_k^{(t)} \lambda\alpha}{2} \|\alpha e_k\|^2 +\Big[L_k^{(t)}\tau_k^{(t)}\lambda  + 1 -\tau_k^{(t)}\Big] \lambda\alpha \|\alpha e_k\| \|d_k+\alpha e_k\|, \label{eq:des2}
            \end{align}
    where $x_k=\prox{z_k}$. We now discuss two cases. First, if $\texttt{flag} = \texttt{FO}$, then \eqref{eq:armijo} naturally holds by setting $e_k=0$ in \eqref{eq:des2} and noting that $\sigma \in (0,1)$. Second, in the case $\texttt{flag} = \texttt{SO}$, we further impose $\alpha \le c_k^{(t)}$ where 
    \begin{align}\label{eq:ckt}
        c_k^{(t)}:=\min\Big\{\sqrt{\frac{1-\sigma}{2}}\eta_k, \frac{(1-\sigma)\eta_k \tau_k^{(t)} }{4(L_k^{(t)} \tau_k^{(t)} \lambda +1)(1+1/\eta_k)},1\Big\},
    \end{align}
    Combining the gradient-related test \eqref{eq:cur}, $\alpha \le 1$, and $\tau_k^{(t)} \ge 0$, this then yields
    \begin{align}\frac{\tau_k^{(t)} \lambda \alpha}{2} \| \alpha e_k\|^2  & \overset{\eqref{eq:ckt}}{\le} \frac{\tau_k^{(t)} \lambda \alpha}{2}  \frac{1-\sigma}{2} \eta_k^2 \|e_k\|^2 \overset{\eqref{eq:cur}}{\le} \frac{\tau_k^{(t)} \lambda \alpha}{2} \frac{1-\sigma}{2}\chi(z_k)^2, \label{eq:usecur1} \\
        & \hspace{-14ex}\Big[L_k^{(t)} \tau_k^{(t)}\lambda + 1 -\tau_k^{(t)}\Big] \lambda \alpha \| \alpha e_k\|\|d_k+ \alpha e_k\| \label{eq:usecur2}\\
        &\overset{\eqref{eq:cur}}{\le} \lambda \alpha\Big[L_k^{(t)} \tau_k^{(t)} \lambda + 1\Big]  \frac{ \alpha}{\eta_k} \Big(1+ \frac{1}{\eta_k}\Big) \chi(z_k)^2 \overset{\eqref{eq:ckt}}{\le} \frac{\tau_k^{(t)} \lambda \alpha}{2} \frac{1-\sigma}{2}\chi(z_k)^2. \nonumber
    \end{align}
    Here, in the first inequality of \eqref{eq:usecur2}, we also applied $\alpha \le 1$ and $\tau_k^{(t)} \ge 0$.
    Thus, \eqref{eq:armijo} holds if $\alpha \le \min\{\bar \alpha_k^{(t)}, c_k^{(t)}\}$ which finishes the proof. 
    \end{proof}
Similar to \cite[Theorem 4.8]{ouyang2025trust}, we now list two standard assumptions on $f$ and $\vp$ and present the global convergence results. In contrast to \cite[Assumption B.2]{ouyang2025trust}, we do not need to assume boundedness of the matrices $\{B_k\}$. This relaxation is possible due to the adaptive control of the term $\|e_k\|$ by the gradient-related test \eqref{eq:cur}---as seen in \eqref{eq:usecur1} and  \eqref{eq:usecur2}.

\begin{assumption}\label{assum2-1} We consider the conditions: \vspace{0.5ex}
    \begin{enumerate}[label=\textup{\textrm{(A.\arabic*)}},topsep=0pt,itemsep=0ex,partopsep=0ex,leftmargin=8ex]
    \item \label{A1} The gradient $\nabla f$ is Lipschitz continuous on $\dom{\vp}$ with modulus $L$.
    \item \label{A2} The objective function $\psi$ is bounded from below (on $\dom{\partial\vp}$).
    \end{enumerate}
\end{assumption}

\begin{thm}\label{thm:global}
Let \ref{A1}--\ref{A2} be satisfied and let $\{z_k\}$ be generated by Algorithm~\ref{algo:main}. The algorithm then either terminates after finitely many steps or we have
$$\lim_{k\rightarrow\infty}\chi(z_k)=0 \quad \text{and} \quad \sum_{k=0}^\infty {\nu}_k \|x_{k+1}-x_k\|^2 < \infty.$$
\end{thm}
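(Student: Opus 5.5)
Under \ref{A1}, the plan is to first show that the adaptive estimates are uniformly controlled. Since $p_\alpha, x_k \in \dom{\vp}$ (they lie in the range of $\proxs$), the descent lemma and Lipschitz continuity on $\dom{\vp}$ give $U_\alpha \le \frac{L}{2}V_\alpha^2$ and $W_\alpha \le L V_\alpha$. Hence $L_k^{(t)} \le \max\{L,\bar L\} =: \hat L$ for all $k,t$. Consequently $\tau_k^{(t)} \ge \min\{2\gamma(1-\nu)/(\hat L^2\lambda^2+2), \tau_{-1}\} =: \underline{\tau} > 0$, and $\{\tau_k\}$ is nonincreasing, so $\tau_k \downarrow \bar\tau \ge \underline\tau$.

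Next, I would plug these bounds into Lemma~\ref{lemma:welldef}. With $\tau_k^{(t)} \in [\underline\tau,\tau_{-1}]$ and $L_k^{(t)} \le \hat L$, the threshold in that lemma is bounded below by $c\,\eta_k^{2}/(1+\eta_k)$-type quantities. More precisely, it is at least $\underline\alpha_k := \min\{\bar c, c_1 \eta_k, c_2\eta_k^2/(1+\eta_k)\}$ with constants independent of $k$. In the \texttt{FO} case only the first two entries matter and $\alpha_k \ge \rho\bar c$. Thus the linesearch terminates finitely, and $\alpha_k \ge \rho\,\underline\alpha_k$. Since $\eta_k \le \eta$, we get $\underline\alpha_k \ge c_3 \min\{1,\eta_k^2\}$ for some $c_3>0$.

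For the summation, I would use monotonicity of $\tau$: since $\tau_{k+1}\le\tau_k$, $H(\tau_{k+1},z_{k+1}) \le H(\tau_k,z_{k+1})$. Combining this with \eqref{eq:armijo} gives
\[ H(\tau_{k+1},z_{k+1}) \le H(\tau_k,z_k) - \tfrac{\sigma\lambda\underline\tau\alpha_k}{2}\chi(z_k)^2 - \tfrac{\nu_k}{\lambda\alpha_k}\|x_{k+1}-x_k\|^2. \]
$H$ is bounded below by \ref{A2}, because $\psi(\prox{z})$ is evaluated at points of $\dom{\partial\vp}$. Telescoping then yields $\sum_k \alpha_k\chi(z_k)^2 < \infty$ and $\sum_k \nu_k\|x_{k+1}-x_k\|^2/\alpha_k<\infty$, which implies the second claim since $\alpha_k\le 1$.

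The main obstacle is deducing $\chi(z_k)\to 0$ when $\alpha_k$ may shrink with $\eta_k = \min\{b_k\chi(z_k)^q,\eta\}$. I would argue by contradiction in two stages. First, suppose $\chi(z_k) \ge \epsilon$ on an infinite set $K$. Since $b_k \to\infty$, for large $k\in K$ we have $\eta_k = \eta$, so $\alpha_k$ is bounded below by a positive constant on $K$. This contradicts summability of $\alpha_k\chi(z_k)^2$, giving $\liminf \chi(z_k) = 0$ (and more, namely $\chi(z_k) < \epsilon$ eventually for each $\epsilon$). Indeed, the argument as stated applies to any infinite subset with $\chi \ge\epsilon$, so it directly gives $\limsup\chi(z_k)\le\epsilon$ for every $\epsilon>0$, hence $\lim\chi(z_k)=0$. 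The delicate point to verify is that $\alpha_k$ is bounded below uniformly on such subsequences. This requires the lower bound on the Lemma~\ref{lemma:welldef} threshold to depend on $k$ only through $\eta_k$; it does, given the uniform bounds on $L_k^{(t)}$ and $\tau_k^{(t)}$ established in the first step. If that failed, I would instead fall back on the weaker summability $\sum \min\{1,b_k^2\chi(z_k)^{2q}\}\chi(z_k)^2<\infty$ together with $b_k\to\infty$.
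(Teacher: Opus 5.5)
Your proposal is correct and follows essentially the same route as the paper: the uniform bounds $L_k^{(t)}\le\max\{L,\bar L\}$ and $\tau_k^{(t)}\ge\tau_{\min}$, the stepsize bound $\alpha_k\ge C_1\min\{1,\eta_k,\eta_k^2\}$ from Lemma~\ref{lemma:welldef}, and telescoping the Armijo condition using the monotonicity of $\{\tau_k\}$ and the lower bound on $H$ from \ref{A2}. Your contradiction argument (on any subsequence with $\chi(z_k)\ge\epsilon$, $b_k\to\infty$ forces $\eta_k=\eta$, so $\alpha_k$ stays bounded away from zero and $\sum_k\alpha_k\chi(z_k)^2$ diverges) is exactly the step the paper leaves implicit when it says ``this readily yields the desired results.''
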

\begin{proof}
We first show that the linesearch procedure will always stop within finitely many inner iterations and the stepsizes $\alpha_k$, $k \in \N$, are bounded from below. Due to \ref{A1}, $\nu_k \le \nu$ and the non-increasing property of the sequence $\{\tau_k\}$, we have $L_k^{(t)} \le \max\{\bar L, L\} =: L_{\max}$ and  $\tau_k^{(t)} \ge  \min\{ \frac{2\gamma(1-\nu)}{L_{\max}^2 \lambda^2 +2}, \tau_{-1}\}=:\tau_{\min}$. Combining Lemma~\ref{lemma:welldef} and the backtracking mechanism,  it follows
\begin{align}
    \nonumber \alpha_k & \ge \rho \min\Big\{1, \frac{2(1-\gamma)(1-\nu)}{(1+2\tau_{-1})L_{\max}\lambda +\tau_{-1}}, \sqrt{\frac{1-\sigma}{2}}\eta_k, \frac{(1-\sigma)\eta_k\tau_{\min}}{4(L_{\max} \tau_{-1} \lambda +1)(1+1/\eta_k)}\Big\} \\
    & \geq C_1 \min\Big\{1,\eta_k,\frac{2\eta_k^2}{1+\eta_k} \Big\} \geq C_1 \min\{1,\eta_k,\eta_k^2\}\label{eq:alphafinal}
\end{align}
for some suitable $C_1 > 0$. By assumption \ref{A2}, the function values $\{H(\tau_k, z_k)\}$ are bounded from below. By the linesearch condition \eqref{eq:armijo}, the sequence $\{H(\tau_k, z_k)\}$ is non-decreasing and thus, it converges to some $\zeta \in \R $. We now consider the case where the algorithm does not stop within finitely many iterations. Combining \eqref{eq:armijo}, \eqref{eq:alphafinal}, $\alpha_k \le 1$ and the definition of $\eta_k$ in \eqref{eq:cur}, we then have 
    \begin{align}
        \nonumber\infty &> H(\tau_0,z_0) - \zeta  = \sum_{ k\ge 0}H(\tau_{k},z_k) - H(\tau_{k+1},z_{k+1}) \ge \sum_{ k\ge 0}H(\tau_{k},z_k) - H(\tau_{k},z_{k+1})\\
        \nonumber & \overset{\eqref{eq:armijo}}{\ge} \sum_{ k\ge 0} \frac{\sigma \lambda \tau_k \alpha_k}{2}\chi(z_k)^2 + \frac{\nu_k}{\lambda \alpha_k} \|x_{k+1}-x_k\|^2 \\
        \nonumber & \overset{\eqref{eq:alphafinal}}{\ge} \sum_{ k\ge 0} C_2 \min\{1, \eta_k ,\eta_k^2\}\chi(z_k)^2 + \frac{\nu_k}{\lambda} \|x_{k+1}-x_k\|^2 \\
        &  \overset{\eqref{eq:cur}}{\ge} \sum_{ k\ge 0} C_2 \min\{1,\eta^2, b_k\chi(z_k)^{q}, b_k^2\chi(z_k)^{2q}\}\chi(z_k)^2 + \frac{\nu_k}{\lambda} \|x_{k+1}-x_k\|^2. \label{eq:sum}
    \end{align}
where $C_2 := \frac{C_1\sigma \lambda \tau_{\min} }{2}$ and the second inequality is due to the monotonicity of $\{\tau_k\}$. This readily yields the desired results. 
\end{proof}

\subsection{Convergence Properties Under the \texorpdfstring{Kurdyka- {\L}ojasiewicz}{Kurdyka- Lojasiewicz} Inequality} 
\label{ssec:KL}
In this subsection, we investigate convergence properties of the iterates $\{z_k\}$ under the Kurdyka-{\L}ojasiewicz (KL) inequality. Our analysis builds on the highly successful KL framework studied in  \cite{absil2005convergence,AttBol09,AttBolSva13,BolSabTeb14} and is motivated by recent KL-based results for nonsmooth Newton methods \cite{SteThePat17,TheStePat18,ouyang2025trust} and the proximal gradient method \cite{jia2023convergence}. By $\mathfrak{S}_\eta$, we denote the class of continuous and concave desingularizing functions $\varrho : [0,\eta) \to \R_+$ such that 
\[ \varrho \in C^1((0,\eta)), \quad \varrho(0) = 0, \quad  \varrho^\prime(t) > 0 \quad \forall~t \in (0,\eta). \]
%
In addition, the set $\mathfrak L := \{ \varrho : \R_+ \to \R_+ : \exists~c > 0, \, \theta \in [0,1): \varrho(t) = c t^{1-\theta} \}$ denotes a subclass of {\L}ojasiewicz functions. 
Clearly, we have $\mathfrak L \subset \mathfrak S_\eta$ for all $\eta > 0$. Here, we only state the KL-property for functions of the form $\psi = f + \varphi$, where $f$ is continuously differentiable and $\vp: \Rn \to \Rex$ is convex, lsc, and proper.
\begin{defn} \label{def:kl} Let $\psi = f + \vp$ be a proper, lsc function as specified above. We say that $\psi$ has the Kurdyka-{\L}ojasiewicz (KL-)property at the point $\bar x \in \dom{\partial\vp}$ if there are $\eta \in (0,\infty]$, a neighborhood $U$ of $\bar x$, and a function $\varrho \in \mathfrak{S}_\eta$ such that for all $x \in U \cap \{x \in \Rn : 0 < \psi(x)-\psi(\bar x) < \eta\}$ the following KL-inequality holds:
\begin{equation} \label{eq:kl-ineq} \varrho^\prime(\psi(x) - \psi(\bar x)) \cdot \dist(0,\partial \psi(x)) \geq 1. \end{equation}
If the mapping $\varrho$ can be chosen from $\mathfrak L$ and satisfies $\varrho(t) = c t^{1-\theta}$ for some $c > 0$ and $\theta \in [0,1)$, then we say that $\psi$ has the {\L}ojasiewicz-property at $\bar x$ with exponent $\theta$. 
\end{defn}

The KL-inequality is satisfied for the rich class of subanalytic and semialgebraic functions \cite{lojasiewicz1963,lojasiewicz1993,kurdyka1998,BolDanLew06}. Thus, KL-based analysis techniques enjoy broad applicability in practice. We now formulate our main assumptions. 

\begin{assumption} \label{ass:kl} We consider the conditions: \vspace{0.5ex}
    \begin{enumerate}[label=\textup{\textrm{(B.\arabic*)}},topsep=0pt,itemsep=0ex,partopsep=0ex,leftmargin=8ex]
    \item \label{C1} Suppose there exists $\bar z \in \mathcal A := \{z: \liminf_{k\to\infty}\|z_k-z\| = 0\}$ such that $\psi$ has the KL-property at $\bar x = \prox{\bar z}$.
    \item \label{C3} There is an accumulation point $\bar z \in \mathcal A$ such that $\psi$ satisfies the {\L}ojasiewicz-property at $\bar x = \prox{\bar z}$ with exponent $\theta \in [0,1)$.
    \item \label{C2} We assume that $\nu_k$ is given by $ \nu_k = \min\{\nu ,a_k^2\|\prox{z_k+s_k(\alpha_k)}-x_k\|^{2p}\}$ where $a_k = (k\ln^{2}(k+1))^p$ and $p>0$.
    \end{enumerate}
\end{assumption}

Assumption~\ref{C2} specifies the growth behavior of the parameters $\{\nu_k\}$. This will play an important role in the convergence proof. More generally, the constants $a_k$ in \ref{C2} are only required to satisfy $\sum_{k=0}^{\infty} a_k^{-1/p} < \infty$, cf. \cite{ouyang2025trust}. Here, we work with a more specific form for $a_k$ to simplify the analysis. Next, we show that the KL-property can be transferred from $\psi$ to the merit function $z \mapsto H(\tau_0,z)$. Lemma~\ref{lemma:calc-kl-mer} is a generalization of \cite[Lemma 5.3]{ouyang2025trust}, where only the {\L}ojasiewicz case is considered.

\begin{lemma} \label{lemma:calc-kl-mer} 
    Let $\psi$ satisfy the KL-property at a stationary point $\bar x = \prox{\bar z} \in \crit(\psi)$. Then, there exist $\eta \in (0,\infty]$, a neighborhood $V$ of $\bar z$, and a function $\varrho_H \in \mathfrak S_\eta$ such that for all $z \in V \cap \{z \in \Rn: 0 < H(\tau_{0}, z)-H(\tau_{0}, \bar z) < \eta\}$, we have 
    \be \label{eq:kl-mer} \varrho_H^\prime(H(\tau_{0}, z) - H(\tau_{0}, \bar z)) \cdot \chi(z) \geq 1. \ee
    If $\psi$ satisfies the {\L}ojasiewicz-property at $\bar x$ with exponent $\theta$, then $\varrho_H$ can be selected from $\mathfrak L$ and we have $\varrho_H(t) = ct^{1-\theta^\prime}$ where $c > 0$ and $\theta^\prime = \max\{\theta,\frac12\}$.
\end{lemma}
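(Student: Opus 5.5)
The strategy is to transfer the KL-inequality for $\psi$ at $\bar x$ to $H(\tau_0,\cdot)$ at $\bar z$ by bounding three things in terms of $\chi(z)$ and $\psi(x)-\psi(\bar x)$, where $x := \prox{z}$: the distance $\dist(0,\partial\psi(x))$, the gap $H(\tau_0,z)-H(\tau_0,\bar z)$, and the neighborhood data. We write $x := \prox{z}$ throughout.

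\emph{Subgradient bound.} By the optimality condition of the prox, $\lambda^{-1}(z-x) \in \partial\varphi(x)$. Hence
\[ \Fnor{z} = \nabla f(x) + \lambda^{-1}(z-x) \in \partial\psi(x), \]
which gives $\dist(0,\partial\psi(x)) \le \chi(z)$.

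\emph{Base point and neighborhood.} Since $\bar x\in\crit(\psi)$, we may take $\bar z$ to be a zero of $\oFnor$ (this is the setting of the lemma), so $H(\tau_0,\bar z)=\psi(\bar x)$. By continuity of $\proxs$ (nonexpansive) and of $\oFnor$, we choose $V$ so small that $\prox{V}\subset U$. On $V$ we also want $\chi(z)$ small, say $\chi(z)\le 1$, and $\psi(x)-\psi(\bar x) < \eta_\psi$ whenever it is positive; this last requirement is addressed in the case analysis below.

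\emph{Bounding the merit gap.} Set $\Delta := H(\tau_0,z)-H(\tau_0,\bar z) = [\psi(x)-\psi(\bar x)] + \tfrac{\tau_0\lambda}{2}\chi(z)^2 > 0$. We split into cases.
\begin{enumerate}[label=(\alph*)]
\item If $\psi(x)-\psi(\bar x)\le 0$, then $\Delta \le \tfrac{\tau_0\lambda}{2}\chi(z)^2$.
\item If $\psi(x)-\psi(\bar x)>0$, then for $z$ close to $\bar z$ we have $\psi(x)-\psi(\bar x) < \Delta < \eta$, and the KL-inequality gives $1/\varrho'(\psi(x)-\psi(\bar x)) \le \chi(z)$. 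To avoid needing $\varrho'$ monotone, we would first replace $\varrho$ by a concave majorant; concavity already makes $\varrho'$ nonincreasing. We then define
\[ \varrho_H(t) := \varrho(t) + c_0\sqrt{t}, \qquad c_0 := \sqrt{2\tau_0\lambda}\ (\text{or similar}). \]
\end{enumerate}
The key inequality to establish is $\varrho_H'(\Delta)\chi(z)\ge 1$. Since $\Delta \le \max\{2(\psi(x)-\psi(\bar x)),\, \tau_0\lambda\chi(z)^2\}$, we distinguish which term dominates.
\begin{enumerate}[label=(\roman*)]
\item If $\Delta\le 2(\psi(x)-\psi(\bar x))$, monotonicity of $\varrho'$ gives $\varrho'(\Delta)\ge\varrho'(2(\psi(x)-\psi(\bar x)))$. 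Passing from the argument $2t$ back to $t$ is the awkward part. To avoid it, I would instead use $\tilde\varrho(t):=2\varrho(t/2)$, whose derivative $\varrho'(t/2)$ at $\Delta$ is at least $\varrho'(\psi(x)-\psi(\bar x))$.
\item If $\Delta\le \tau_0\lambda\chi(z)^2$, then $\tfrac{c_0}{2}\Delta^{-1/2}\chi(z) \ge \tfrac{c_0}{2\sqrt{\tau_0\lambda}} \ge 1$ for the chosen $c_0$.
\end{enumerate}
So we set $\varrho_H(t)=2\varrho(t/2)+2\sqrt{\tau_0\lambda}\sqrt t$. This function is concave, $C^1$ on $(0,\eta)$, and vanishes at $0$, so $\varrho_H\in\mathfrak S_\eta$ for a suitably shrunk $\eta$. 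Case (a) is handled by case (ii).

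\emph{{\L}ojasiewicz case.} If $\varrho(t)=ct^{1-\theta}$, then $\varrho_H$ is a sum of $t^{1-\theta}$ and $t^{1/2}$. Near $0$ with $t<1$, the derivative of the term $t^{1-\theta'}$, where $\theta'=\max\{\theta,\tfrac12\}$, dominates both derivatives up to constants, since $t^{-\theta}\le t^{-\theta'}$ and $t^{-1/2}\le t^{-\theta'}$. Hence $\varrho_H$ may be replaced by $\hat c\,t^{1-\theta'}$.

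\emph{Main obstacle.} The delicate point is ensuring $0<\psi(x)-\psi(\bar x)<\eta_\psi$ so that KL applies. Lower semicontinuity gives only a lower bound on $\psi(x)$, but the upper bound follows from $\psi(x)-\psi(\bar x)\le \Delta<\eta$. I would therefore take $\eta\le\eta_\psi$ and choose $V$ so that $\prox{V}\subset U$.
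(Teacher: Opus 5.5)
Your proof is correct and follows the paper's skeleton. Both arguments use the inclusion $\Fnor{z}\in\partial\psi(\prox{z})$, the identity $H(\tau_0,\bar z)=\psi(\bar x)$, the splitting $\Delta\le 2\max\{\psi(x)-\psi(\bar x),\frac{\tau_0\lambda}{2}\chi(z)^2\}$, and the observation that $\psi(x)-\psi(\bar x)\le\Delta<\eta$ keeps the KL-inequality in range.

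Where you differ is the construction of $\varrho_H$. The paper replaces $\varrho'$ by the strictly decreasing majorant $\zeta(t)=\varrho'(t)+\eta-t$ so that $1/\zeta$ is invertible. It then defines $\varrho_H$ implicitly via $(1/\varrho_H')^{-1}(t)=2\max\{(1/\zeta)^{-1}(t),\frac{\tau_0\lambda}{2}t^2\}$; unwinding the inverses gives $\varrho_H'(s)=\max\{\zeta(s/2),\sqrt{\tau_0\lambda/s}\}$. Your $\varrho_H(t)=2\varrho(t/2)+2\sqrt{\tau_0\lambda t}$ has $\varrho_H'(s)=\varrho'(s/2)+\sqrt{\tau_0\lambda/s}$. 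So it is essentially the same desingularizer, with the maximum replaced by a sum and without the perturbation $\eta-t$.

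Your explicit form buys several things:
\begin{itemize}
\item It uses only that $\varrho'$ is nonincreasing, which is automatic from concavity. The ``concave majorant'' you mention is therefore unnecessary.
\item No strict monotonicity or inverse functions are needed, so there is no bookkeeping about where $(1/\zeta)^{-1}$ is defined.
\item The {\L}ojasiewicz case follows in one line from $\varrho_H'(t)\le\hat C t^{-\theta'}$ for $t<1$. The paper proves only the general statement and defers the {\L}ojasiewicz case to the earlier trust region paper.
\end{itemize}

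Two minor points. First, the constant has to be your final $2\sqrt{\tau_0\lambda}$; the tentative $\sqrt{2\tau_0\lambda}$ only guarantees $\varrho_H'(\Delta)\chi(z)\ge 1/\sqrt{2}$ in case (ii). Second, exactly as in the paper, $\chi(\bar z)=0$ is an implicit assumption not implied by $\prox{\bar z}\in\crit(\psi)$ alone. If it failed, $\chi$ would be bounded away from zero near $\bar z$ and the statement would be trivial.
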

\begin{proof} We only need to prove the first statement here.
Let $\eta > 0$, $\varrho \in \mathfrak S_\eta$, and $U$ be given as in Definition~\ref{def:kl}.
Our proof is based on the following two key steps:
    \begin{itemize}
        \item[\rmn{(i)}] There exists a strictly decreasing function $\zeta : (0,\eta) \to \R$ such that $\zeta(t) \ge \varrho^\prime(t)$ for all $t \in (0,\eta)$.
        \item[\rmn{(ii)}] Let us set $\varrho_H(t)=  \int_{0}^{t} \varrho_H^\prime(s)\,\mathrm{d}s$ where $({1}/{\varrho_H^{\prime}})^{-1}(t) = 2 \max\{(1/\zeta)^{-1}(t),\frac{\tau_0 \lambda}{2} t^2 \}$. Then, we have $\varrho_H \in \mathfrak S_\eta$ and the KL-type inequality \eqref{eq:kl-mer} is satisfied.
    \end{itemize}
     Statement (i) is clear as we can set $\zeta(t)=\varrho'(t) + \eta -t$, which meets the requirement since $\eta -t$ is strictly decreasing and non-negative on $(0,\eta)$.
    Next, let us turn to part (ii). There exists $\epsilon > 0$ such that $B_\epsilon(\bar x) \subset U$. We now choose $\delta > 0$ sufficiently small such that $H(\tau_0,\bar z) = \psi(\bar x) < H(\tau_0,z) < H(\tau_0,\bar z) + \eta$ and $\|\prox{z}-\bar x\| \leq \epsilon$ for all $z \in B_\delta(\bar z)$. Hence, for such $z$, we can infer $\prox{z} \in U$ and $\psi(\prox{z}) < \psi(\bar x) + \eta$. If $\psi(\prox{z}) > \psi(\bar x)$, then \eqref{eq:kl-ineq} is applicable and it follows
    $$ \zeta(\psi(\prox{z}) - \psi(\bar x)) \chi(z) \geq \varrho^\prime(\psi(\prox{z}) - \psi(\bar x)) \chi(z) \ge 1,$$
    %
    where we used $\Fnor{z} \in \partial\psi(\prox{z})$. Since $\zeta$ is strictly decreasing, the inverse function $s \mapsto (1/\zeta)^{-1}(s)$ exists (on $(0,\eta)$) and  we have
    \begin{align}
            \nonumber H(\tau_0, z) - H(\tau_0, \bar z)  &= \psi(\prox{z}) - \psi(\bar x) + \frac{\tau_0\lambda}{2} \chi^2(z) \\ \nonumber & \le (1/\zeta)^{-1} (\chi(z)) + \frac{\tau_0 \lambda}{2} \chi^2(z) \\
            & \le 2 \max\Big\{(1/\zeta)^{-1} (\chi(z)),\frac{\tau_0 \lambda}{2} \chi^2(z)\Big\} = ({1}/{\varrho_H^{\prime}})^{-1} (\chi(z)).
            \label{eq:kl-general}
    \end{align}
    Moreover, \eqref{eq:kl-general} naturally holds when $\psi(\prox{z}) \leq \psi(\bar x)$. As the maximum of two strictly increasing functions, the mapping $s \mapsto (1/\varrho_H^{\prime})^{-1}(s)$ is also strictly increasing. Thus, we can take its inverse in \eqref{eq:kl-general} and construct $\varrho_H$ as shown in part (ii). The KL-type inequality \eqref{eq:kl-mer} then follows readily from \eqref{eq:kl-general}.   Since the inverse of a strictly increasing function is also strictly increasing, $1/\varrho_H^{\prime}$ is strictly increasing and hence, $\varrho_H$ is a concave function. 
    It is then easy to see that $ \varrho_H \in \mathfrak S_\eta$.
\end{proof}

We now extend \cite[Theorem 5.5]{ouyang2025trust} to Algorithm~\ref{algo:main} and show convergence of the whole sequence $\{z_k\}$. In stark contrast to \cite{ouyang2025trust}, our proof does not require the boundedness of the iterates $\{z_k\}$, cf.\ \cite[Assumption C.2]{ouyang2025trust}. This is made possible by adapting the induction technique recently used in \cite{jia2023convergence}. 

\begin{thm} \label{thm:kl}
Let assumptions \ref{A1}--\ref{A2} and \ref{C2} hold and let the sequence $\{z_k\}$ be generated by Algorithm~\ref{algo:main}. Suppose $\bar z$ is an accumulation point of $\{z_k\}$ at which condition \ref{C1} is satisfied. Then, we have $\sum_{k=0}^\infty \|x_{k+1}-x_k\| < \infty$ and the whole sequence $\{z_k\}$ converges to $\bar z$ with $\bar x = \prox{\bar z} \in \crit(\psi)$.
\end{thm}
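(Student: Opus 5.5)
We follow the standard KL finite-length argument, but on the merit function $H(\tau_0,\cdot)$ rather than on $\psi$, and we use an induction in the spirit of \cite{jia2023convergence} to keep the iterates near $\bar z$ without assuming boundedness.

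\textbf{Step 1: limits and stationarity.} By Theorem~\ref{thm:global}, $\chi(z_k)\to 0$, and $H(\tau_k,z_k)$ decreases monotonically to some $\zeta$. The sequence $\tau_k$ is non-increasing and bounded below by $\tau_{\min}>0$, and $\chi(z_k)\to0$. Hence the values $H(\tau_0,z_k)$ and $\psi(x_k)$ also converge to $\zeta$. Take a subsequence $z_{k_j}\to\bar z$. Continuity of $\oprox$ and of $\oFnor$ gives $\Fnor{\bar z}=0$, so $\bar x\in\crit(\psi)$. To identify $\zeta=\psi(\bar x)$, we use lower semicontinuity together with the prox inequality
\[\varphi(x_{k})+\tfrac1{2\lambda}\|x_{k}-z_{k}\|^2\le\varphi(\bar x)+\tfrac1{2\lambda}\|\bar x-z_{k}\|^2\]
along the subsequence. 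This yields $\varphi(x_{k_j})\to\varphi(\bar x)$, hence $\zeta=H(\tau_0,\bar z)$. If $H(\tau_k,z_k)=\zeta$ for some $k$, the Armijo decrease \eqref{eq:armijo} forces $\chi(z_k)=0$ and termination, so we may assume the values exceed $\zeta$ strictly.

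\textbf{Step 2: one-step decrease estimate.} Write $\Delta_k:=\varrho_H(H(\tau_k,z_k)-\zeta)-\varrho_H(H(\tau_{k+1},z_{k+1})-\zeta)$. Lemma~\ref{lemma:calc-kl-mer} is stated for $\tau_0$. Since $\tau_k\le\tau_0$, we have $H(\tau_k,z)\le H(\tau_0,z)$, and the difference is at most $\tfrac{\tau_0\lambda}{2}\chi(z)^2$. I would therefore re-derive the inequality for $H(\tau_k,\cdot)$ by the same construction, which gives the uniform bound $\varrho_H'(H(\tau_k,z)-\zeta)\chi(z)\ge1$ near $\bar z$. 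This works because the proof of Lemma~\ref{lemma:calc-kl-mer} only uses the upper bound $\tfrac{\tau\lambda}{2}\chi^2\le\tfrac{\tau_0\lambda}{2}\chi^2$.

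Concavity of $\varrho_H$ and \eqref{eq:armijo} then give
\[\Delta_k\ge \frac{1}{\chi(z_k)}\Big(\tfrac{\sigma\lambda\tau_{\min}\alpha_k}{2}\chi(z_k)^2+\tfrac{\nu_k}{\lambda\alpha_k}\|x_{k+1}-x_k\|^2\Big).\]

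\textbf{Step 3: bounding the step length.} We need to control $\|x_{k+1}-x_k\|$, which is the main obstacle. Nonexpansiveness of the prox gives $\|x_{k+1}-x_k\|\le\|s_k(\alpha_k)\|\le\lambda\alpha_k(1+1/\eta_k)\chi(z_k)$. This is useful only when $\eta_k$ is not small. We therefore split into three cases.

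(a) If $\nu_k=\nu$, the $\nu_k$-term together with AM--GM gives $\|x_{k+1}-x_k\|\le C\,\Delta_k+\tfrac12\chi(z_k)\alpha_k$-type terms. These are absorbable because $\alpha_k\chi(z_k)\lesssim\Delta_k$ by the first term.

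(b) If $\nu_k=a_k^2\|x_{k+1}-x_k\|^{2p}$, then $\nu_k\|x_{k+1}-x_k\|^2=a_k^2\|x_{k+1}-x_k\|^{2+2p}$. Young's inequality then yields
\[\|x_{k+1}-x_k\|\le C\Delta_k+C a_k^{-1/p}.\]
The term $a_k^{-1/p}=(k\ln^2(k+1))^{-1}$ is summable.

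(c) For the direction bound, use \eqref{eq:alphafinal}. Either $\eta_k=\eta$ or $\eta_k=b_k\chi^q$. In the latter case, $\chi(z_k)/\eta_k=b_k^{-1}\chi^{1-q}$, which we handle the same way via a summable remainder $b_k^{-1/q}$.

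\textbf{Step 4: induction and conclusion.} We prove by induction that $z_k$ stays in the KL neighborhood $V$ for all $k\ge k_0$. Here $k_0$ is chosen from the convergent subsequence with $z_{k_0}$ close to $\bar z$, $\varrho_H(H(\tau_{k_0},z_{k_0})-\zeta)$ small, and tail sums of $a_k^{-1/p}$ and $b_k^{-1/q}$ small. Control of $\|z_{k+1}-z_k\|$ follows from $\|s_k(\alpha_k)\|$, which is bounded by $\chi(z_k)$-terms plus summable remainders as in Step 3, because $z_{k+1}-z_k=s_k$. Telescoping the $\Delta_k$ then gives $\sum\|z_{k+1}-z_k\|<\infty$ and $\sum\|x_{k+1}-x_k\|<\infty$. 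Hence $\{z_k\}$ is Cauchy and converges to its accumulation point $\bar z$, with $\bar x\in\crit(\psi)$ by Step 1.
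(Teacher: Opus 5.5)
\textbf{Gap: keeping $z_k$ in the KL neighborhood.} Your Steps 1--3(a),(b) are sound. The AM--GM step makes $\alpha_k$ drop out and gives $\Delta_k\ge\sqrt{2\sigma\tau_{\min}\nu_k}\,\|x_{k+1}-x_k\|$. Your per-step Young inequality $\|x_{k+1}-x_k\|\le C\Delta_k+a_k^{-1/p}$ is a valid, arguably simpler, substitute for the paper's reversed H\"older argument. The problem lies in Step 3(c) and Step 4, where you keep $z_k$ inside $V$. It therefore also affects your final Cauchy argument for $\{z_k\}$.

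You propose to sum $\|z_{k+1}-z_k\|=\|s_k(\alpha_k)\|$. In the \texttt{SO} branch, $\|s_k(\alpha_k)\|\le\lambda\alpha_k\chi(z_k)+\lambda\alpha_k^2\|e_k\|$. The only information on $e_k$ is the test \eqref{eq:cur}, which for $\eta_k=b_k\chi(z_k)^q$ gives $\lambda\alpha_k^2\|e_k\|\le\lambda\alpha_k^2\chi(z_k)^{1-q}/b_k$.
\begin{itemize}
\item For $q<1$, Young's inequality (after $\alpha_k^2\le\alpha_k^{1-q}$) gives $\le\lambda[(1-q)\alpha_k\chi(z_k)+q\,b_k^{-1/q}]$, which is what you need.
\item For $q=1$ the bound $\lambda/b_k$ is already summable.
\item For $q>1$ the exponent $1-q$ is negative. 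Then $\chi(z_k)^{1-q}/b_k$ need not even tend to zero: it is $\ge 1$ whenever $\chi(z_k)\le b_k^{-1/(q-1)}$.
\end{itemize}
Nothing compensates in the last case. The linesearch may accept $\alpha_k=1$, and \eqref{eq:alphafinal} is a lower bound on $\alpha_k$, so it is useless here. Since $\{B_k\}$ is not assumed bounded (no \ref{D7}), you have no other estimate for $\|e_k\|$. The theorem allows any $q>0$, so your induction and the claim $\sum\|z_{k+1}-z_k\|<\infty$ are unsupported in general.

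\textbf{Missing idea.} You never need to control the $z$-steps. By definition of the normal map, $z_k=x_k-\lambda\nabla f(x_k)+\lambda\Fnor{z_k}$. Also $\Fnor{\bar z}=0$ gives $\bar z=\bar x-\lambda\nabla f(\bar x)$. Hence, by \ref{A1},
\[
\|z_k-\bar z\|\le(1+\lambda L)\|x_k-\bar x\|+\lambda\chi(z_k).
\]
Since $\chi(z_k)\to0$ already by Theorem~\ref{thm:global}, it suffices to keep $x_k$ near $\bar x$. Your $x$-length estimates from Step 3(a),(b) do exactly that. This is how the paper runs the induction, with radius $\delta=(1+\lambda L)\bar\gamma+\max_{k\ge k_{\ell_0}}\lambda\chi(z_k)$. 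The same identity turns $x_k\to\bar x$ into $z_k\to\bar z$ at the end. With it, case (c) and any bound on $\|s_k(\alpha_k)\|$ can be dropped entirely.

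A minor point: your re-derivation of the KL inequality for $H(\tau_k,\cdot)$ works. It follows more cheaply, as in the paper, from $H(\tau_k,z)-\zeta\le H(\tau_0,z)-\zeta$ and the monotonicity of $\varrho_H'$.
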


\begin{proof}
    It suffices to discuss the case when the algorithm does not stop in finitely many steps. Since $\{\tau_k\}$ is non-increasing and using $\tau_k \ge \tau_{\min}$, \eqref{eq:armijo}, one step of the AM-GM inequality, and \ref{C2}, we obtain
    \begin{align}
        \nonumber \frac{H(\tau_k, z_k)- H (\tau_{k+1}, z_{k+1})}{\chi(z_k) } &\ge \frac{H(\tau_k, z_k)- H (\tau_{k}, z_{k+1})}{\chi(z_k) }
        \ge \sqrt{2 \sigma \tau_{\min} \nu_k}  \|x_{k+1}- x_k\|\\
         & \ge C \min\{\sqrt{\nu}, a_k\|x_{k+1} -x_k\|^p\}  \|x_{k+1}- x_k\|, \label{eq:kl_descent}
    \end{align}
    where $C=\sqrt{2 \sigma \tau_{\min}}$. Moreover, since the algorithm does not stop after finitely many steps and we have $\chi(\bar z)=0$ by Theorem~\ref{thm:global}, we can infer $H(\tau_k, z_k)- H( \tau_k, \bar z)=H(\tau_k, z_k)- H(\bar \tau, \bar z) \to 0$ as $k \to \infty$ by \eqref{eq:sum}, where $\bar \tau = \lim_{k \to \infty} \tau_k$. Due to $\chi(\bar z)=0$, this also implies $H(\tau_0, z_k)- H(\tau_0, \bar z) \to 0$ as $k \to \infty$. Besides, since $\{H(\tau_k, z_k)\}$ is non-increasing and $\tau_0 \ge \tau_k$, we have $H(\tau_0, z_k) \ge H(\tau_k, z_k) >  H(\bar \tau, \bar z) = H(\tau_0, \bar z)$. (The condition $H(\tau_k, z_k) = H(\bar\tau,\bar z)$ would imply $\chi(z_k) = 0$ by \eqref{eq:armijo}). 

    Let $\eta > 0$, $V \subset \Rn$, and $\varrho_H \in \mathfrak S_\eta$ now be given as in Lemma~\ref{lemma:calc-kl-mer}. Then, there exists $ K_\eta \in \N$ such that for all $k \ge K_\eta$, we have $0 < H(\tau_0, z_k)-H(\tau_0, \bar z) \le \eta$. Let $\{z_{k_\ell}\}$ be a subsequence of $\{z_k\}$ converging to $\bar z$ and let $\{x_{k_\ell}\}$ be the corresponding subsequence of $\{x_k\}$ converging to $\bar x$, where $x_k=\prox{z_k}$ and $\bar x=\prox{\bar z}$. Setting $\varrho_{k} = \varrho_H( H(\tau_k, z_k)-H(\tau_k, \bar z))$, there is $k_{\ell_0} \ge \max\{K_\eta, 3\}$ such that $B_\delta (\bar z) \subset V$ where 
    \begin{align*}
    \delta &:= (1+\lambda L) \bar \gamma + \max_{k \ge k_{\ell_0}} \lambda \chi(z_k),\\
    \bar \gamma &:= \|x_{k_{\ell_0}}-\bar x\| + \frac{\varrho_{k_{\ell_0}}}{C \sqrt{\nu}} +  \Big(\frac{\varrho_{k_{\ell_0}}}{C(\ln (k_{\ell_0}-1))^p}\Big)^{1/(1+p)}.
    \end{align*}
    The existence of such $k_{\ell_0}$, $\delta$ and $\bar \gamma$ follow from $x_{k_\ell} \to \bar x$, $\varrho_{k_\ell}\to 0$, $\ln(k_\ell-1) \to \infty$ as $\ell \to \infty$ and $\chi(z_k) \to 0$. Next, we show by induction that the following three statements hold for all $k \ge k_{\ell_0}$:
    \begin{equation}\label{eq:state}
        x_k \in B_{\bar \gamma} (\bar x), \quad  z_k \in B_\delta(\bar z), \quad \|x_{k_{\ell_0}}-\bar x\| + {\sum}_{i=k_{\ell_0}}^{k} \|x_{i+1} -x_i\|\le \bar \gamma.
    \end{equation}
    When $k=k_{\ell_0}$, the first statement in \eqref{eq:state} holds by the definition of $\bar \gamma$. The second statement is due to 
    \begin{align} \nonumber \|z_k -\bar z\| & = \| x_k - \lambda \nabla f(x_k) + \lambda  \Fnor{z_k} -  \bar x + \lambda \nabla f(\bar x) \| \\ & \le (1+\lambda L) \|x_k -\bar x\| + \lambda \chi(z_k) \le \delta. \label{eq:zk-xk-lip}
    \end{align}
    To show the third statement, since $z_{k_{\ell_0}} \in B_\delta (\bar z) \subset V$ and $k_{\ell_0} \ge K_\eta$, \eqref{eq:kl-mer} is applicable for $k=k_{\ell_0}$. We then have
    \begin{align*}
        &C \min\{\sqrt{\nu}, a_{k_{\ell_0}}\|x_{k_{\ell_0}+1} -x_{k_{\ell_0}}\|^p\} \|x_{k_{\ell_0}+1} - x_{k_{\ell_0}}\| \\
        &\overset{\eqref{eq:kl_descent}}{\le} (H(\tau_{k_{\ell_0}}, z_{k_{\ell_0}})- H (\tau_{k_{\ell_0}+1}, z_{k_{\ell_0}+1}) )/\chi(z_{k_{\ell_0}})\\
        & \overset{\eqref{eq:kl-mer}}{\le} \varrho_H^\prime(H(\tau_0, z_{k_{\ell_0}})- H (\tau_0, \bar z))  (H(\tau_{k_{\ell_0}}, z_{k_{\ell_0}})- H (\tau_{k_{\ell_0}+1}, z_{k_{\ell_0}+1}))\\
        & \le \varrho_H^\prime(H(\tau_{k_{\ell_0}}, z_{k_{\ell_0}})- H (\tau_{k_{\ell_0}}, \bar z)) (H(\tau_{k_{\ell_0}}, z_{k_{\ell_0}})- H (\tau_{k_{\ell_0}+1}, z_{k_{\ell_0}+1}))\\ &\le \varrho_{k_{\ell_0}}- \varrho_{k_{\ell_0}+1} \le \varrho_{k_{\ell_0}}.
    \end{align*}
    Here, the third inequality uses the monotonicity of $\varrho_H^\prime$ and $\tau_0 \ge \tau_{k_{\ell_0}}$ and the last two inequalities use the concavity and nonnegativity of $\varrho_H$. Thus, it holds that
    \begin{align*}
    \varrho_{k_{\ell_0}} / C &\ge  \min\{\sqrt{\nu}, a_{k_{\ell_0}}\|x_{k_{\ell_0}+1} -x_{k_{\ell_0}}\|^p\} \|x_{k_{\ell_0}+1} -x_{k_{\ell_0}}\|\\
    &= \min\{\sqrt{\nu}\|x_{k_{\ell_0}+1} -x_{k_{\ell_0}}\|, a_{k_{\ell_0}}\|x_{k_{\ell_0}+1} -x_{k_{\ell_0}}\|^{1+p}\}.
    \end{align*}
    The third statement in \eqref{eq:state} is then satisfied since $1/a_{k_{\ell_0}} =1/(k_{\ell_0} \ln^2 (k_{\ell_0}+1))^p \le 1/(\ln(k_{\ell_0}-1))^p$.  Suppose the conditions in \eqref{eq:state} hold for all $ k_{\ell_0} \le i \le k$. By the triangle inequality, the definition of $\bar \gamma$ and $\delta$, and mimicking \eqref{eq:zk-xk-lip}, we have 
    \begin{align*}
    \|x_{k+1} -\bar x\| &\le \|x_{k_{\ell_0}} -\bar x\| + {\sum}_{i=k_{\ell_0}}^{k}\|x_{i+1}-x_i\| \le \bar \gamma,\\
    \|z_{k+1}-\bar z\| &\le (1+\lambda L) \|x_{k+1} -\bar x\| + \lambda \chi(z_{k+1}) \le \delta.
    \end{align*}
    This proves $x_{k+1} \in B_{\bar \gamma} (\bar x)$, $z_{k+1} \in B_\delta (\bar x)$ and hence, \eqref{eq:kl-mer} is applicable for all $k+1 \ge i \ge k_{\ell_0}$. To verify the third statement, we similarly note that
    \begin{equation}\label{eq:kl_sum}
        \begin{aligned}
            & {\sum}_{i=k_{\ell_0}}^{k+1} C \min\{\sqrt{\nu}, a_i\|x_{i+1} -x_i\|^p\} \|x_{i+1} -x_i\|\\
            &\le {\sum}_{i=k_{\ell_0}}^{k+1} (H(\tau_i, z_i)- H (\tau_{i+1}, z_{i+1}))/\chi(z_i)\\
            & \le {\sum}_{i=k_{\ell_0}}^{k+1} \varrho_H'(H(\tau_0, z_i)- H (\tau_0, \bar z)) (H(\tau_i, z_i)- H (\tau_{i+1}, z_{i+1})) \\
            & \le {\sum}_{i=k_{\ell_0}}^{k+1}  \varrho_H'(H(\tau_i, z_i)- H (\tau_i, \bar z)) (H(\tau_i, z_i)- H (\tau_{i+1}, z_{i+1})) \\
            &\le {\sum}_{i=k_{\ell_0}}^{k+1}  \varrho_{i}- \varrho_{i+1} = \varrho_{k_{\ell_0}} - \varrho_{k+2} \le \varrho_{k_{\ell_0}}.
        \end{aligned}
    \end{equation}
    Next, defining $\mathcal{I}_1 = \{k+1 \ge i \ge k_{\ell_0} \mid \nu_i=\nu\}$ and $\mathcal{I}_2 = \{k+1 \ge i \ge k_{\ell_0} \mid i \notin \mathcal{I}_1\}$ and using the reversed H\"older inequality, we obtain
\begin{align*}
       \frac{\varrho_{k_{\ell_0}} }{C}&\overset{\eqref{eq:kl_sum}}{\ge} {\sum}_{i=k_{\ell_0}}^{k+1} \min\{\sqrt{\nu}, a_i\|x_{i+1} -x_i\|^p\} \|x_{i+1} -x_i\| \\
        &= \sqrt{\nu}  {\sum}_{i \in \mathcal{I}_1}  \|x_{i+1} -x_i\| +   {\sum}_{i\in\mathcal{I}_2}  a_i\|x_{i+1} -x_i\|^{1+p} \\
        &\ge  \sqrt{\nu}  {\sum}_{i \in \mathcal{I}_1}  \|x_{i+1} -x_i\| +   \Big [{\sum}_{i\in\mathcal{I}_2}  a_i^{-1/p} \Big  ]^{-p} \Big [{\sum}_{i\in\mathcal{I}_2} \|x_{i+1} -x_i\| \Big 
 ]^{1+p} \\
        &=  \sqrt{\nu}  {\sum}_{i \in \mathcal{I}_1}  \|x_{i+1} -x_i\| + \Big   [{\sum}_{i\in\mathcal{I}_2}  \frac{1}{i \ln^2(i+1)} \Big  ]^{-p}
        \Big [{\sum}_{i\in\mathcal{I}_2} \|x_{i+1} -x_i\| \Big 
 ]^{1+p} \\
        &\ge  \sqrt{\nu}  {\sum}_{i \in \mathcal{I}_1}  \|x_{i+1} -x_i\| + \Big  [\ln(k_{\ell_0}-1)\Big  ]^{p}\Big  [{\sum}_{i\in\mathcal{I}_2} \|x_{i+1} -x_i\|\Big  ]^{1+p}.
    \end{align*} 
    Here, the second inequality is due to the reversed H\"older inequality and the last inequality uses the bound $\sum_{i \in \mathcal{I}_2} \frac{1}{i \ln^2(i+1)} \le \frac{1}{\ln(k_{\ell_0}-1)}$. This shows \eqref{eq:state} also holds for $k+1$ and completes the induction. Finally, since \eqref{eq:state} is satisfied for all $k \ge k_{\ell_0}$, taking $k \to \infty$, we can deduce that $\sum_{k=0}^{\infty} \|x_{k+1}-x_k\| < \infty$. Hence, $\{x_k\}$ is a Cauchy sequence and it follows $x_k \to \bar x$. Moreover, the convergence of $\{x_k\}$ also yields convergence of $\{z_k\}$ in the sense that
    $$z_k = x_k - \lambda \nabla f(x_k) + \lambda  \Fnor{z_k} \to  \bar x - \lambda  \nabla f(\bar x) =: \bar z, \quad k \to \infty,$$
    and thus, we have $\bar x = \prox{\bar z} \in \crit(\psi)$.    
    \end{proof}

    Similar to \cite[Theorem 5.5, part (ii)]{OuyMil21}, we now quantify the convergence rate of the sequences $\{z_k\}$ and $\chi(z_k)$ depending on the KL exponent $\theta$.
    
    \begin{thm} \label{thm:klspeed}
    Suppose \ref{A1}--\ref{A2} and \ref{C2} are satisfied and let $\{z_k\}$ be generated by Algorithm~\ref{algo:main}. Let $\bar z$ be an accumulation point of $\{z_k\}$ at which \ref{C3} holds with exponent $\theta \in [0,1)$. Then, $\{\chi(z_k)\}$ and $\{z_k\}$ converge with the following rates:
             \[ \chi(z_k) =  O(k^{-\frac{1}{\omega-1}}) \quad \text{and} \quad \|z_{k}-\bar z\| =  O(k^{-\frac{1-\theta^\prime}{(1+p)\theta^\prime(\omega-1)}}), \]
where $\theta^\prime = \max\{\theta,\frac12\}$ and $\omega = \frac{(1+2q)\theta^\prime}{1-\theta^\prime}$. Moreover, we have $\liminf_{k\to\infty}k^{1+r}\chi(z_k)=0$ for all $r \in (0,\frac{1}{\omega-1})$.
    
            %
        \end{thm}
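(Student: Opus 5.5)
Theorem~\ref{thm:kl} already gives $z_k\to\bar z$ and $x_k\to\bar x$, so Lemma~\ref{lemma:calc-kl-mer} applies for all large $k$ with $\varrho_H(t)=ct^{1-\theta'}$. The plan is to derive a recursion for the merit gap $\Delta_k := H(\tau_k,z_k)-H(\bar\tau,\bar z)$ (where $H(\bar\tau,\bar z)=\psi(\bar x)$), turn it into a rate for $\Delta_k$, and then read off rates for $\chi(z_k)$ and $\|z_k-\bar z\|$.

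\textbf{Step 1: recursion for $\Delta_k$.} Start from \eqref{eq:armijo} and \eqref{eq:alphafinal}, which give
\[
\Delta_k-\Delta_{k+1}\ge C_2\min\{1,\eta_k,\eta_k^2\}\chi(z_k)^2 .
\]
For large $k$ we have $\eta_k=\min\{b_k\chi(z_k)^q,\eta\}$. Once $b_k\chi(z_k)^q\le 1$, this yields $\Delta_k-\Delta_{k+1}\gtrsim b_k^2\chi(z_k)^{2+2q}$. In the other regime the bound is $\gtrsim\chi(z_k)^2$, which is better, and $\chi^{2+2q}$ is smaller for small $\chi$; so in all cases $\Delta_k-\Delta_{k+1}\gtrsim \min\{1,b_k^2\}\chi(z_k)^{2+2q}$. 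Next, the KL inequality \eqref{eq:kl-mer} with $\tau_0\ge\tau_k$ gives $\chi(z_k)\ge c'\Delta_k^{\theta'}$. Combining the two, and using $b_k\ge 1$ for large $k$,
\[
\Delta_k-\Delta_{k+1}\ge c''\Delta_k^{(2+2q)\theta'} .
\]
The exponent $\beta := (2+2q)\theta'\ge 1+q>1$. By the standard Attouch--Bolte argument, comparing with $\int\! t^{-\beta}\,dt$, we get $\Delta_k=O(k^{-1/(\beta-1)})$. The exponent has to be matched to the stated $\omega$. I expect the intended combination is $\chi^{2}\cdot\chi^{2q}$ with $\chi\gtrsim \Delta^{\theta'}$ together with $\Delta\lesssim\chi^{1/\theta'}$. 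More precisely, I would use $\chi\ge c\Delta^{\theta'}$ together with $\Delta \lesssim \chi^{1/\theta'}$ to get $\chi(z_{k+1})$ controlled by $\Delta_{k+1}$, and run the recursion directly on $\chi$. This produces $\Delta_k-\Delta_{k+1}\gtrsim \Delta_k^{\theta'(2+2q)}$, and I will then rewrite it as a recursion in $u_k:=\Delta_k^{1-\theta'}$ so that the exponent becomes $\omega=\frac{(1+2q)\theta'}{1-\theta'}$. The bookkeeping that produces exactly $\omega$ (rather than $\beta-1$) is the delicate point: one has to use $\varrho_H(\Delta_k)-\varrho_H(\Delta_{k+1})\ge \varrho_H'(\Delta_k)(\Delta_k-\Delta_{k+1})$, which gives $u_k-u_{k+1}\gtrsim \chi^{1+2q}\gtrsim u_k^{\omega}$, and then $u_k=O(k^{-1/(\omega-1)})$.

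\textbf{Step 2: rate for $\chi(z_k)$.} From the upper bound $\chi(z_{k})\lesssim$ (a power of $u_k$) obtained through the descent inequality, together with $\chi$ small, we get $\chi(z_k)=O(k^{-1/(\omega-1)})$. The cleanest route is to note that the recursion $u_k-u_{k+1}\gtrsim\chi(z_k)^{1+2q}$ and $u_k\lesssim \chi(z_k)^{(1-\theta')/\theta'}$ chain together.

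\textbf{Step 3: rate for the iterates.} For $\|z_k-\bar z\|$, use \eqref{eq:zk-xk-lip}: $\|z_k-\bar z\|\le(1+\lambda L)\sum_{i\ge k}\|x_{i+1}-x_i\|+\lambda\chi(z_k)$. The tail sum is bounded as in \eqref{eq:kl_sum}, starting at index $k$ instead of $k_{\ell_0}$: $\nu$-terms contribute $\lesssim\varrho_k$, while $\mathcal I_2$-terms contribute $\lesssim(\varrho_k/\ln^p k)^{1/(1+p)}\le \varrho_k^{1/(1+p)}$. Since $\varrho_k\asymp u_k$ and $u_k\lesssim\chi^{(1-\theta')/\theta'}$, this gives the exponent $\frac{1-\theta'}{(1+p)\theta'(\omega-1)}$.

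\textbf{Step 4: the $\liminf$ claim.} Finally, $\sum_k b_k^2\chi(z_k)^{2+2q}<\infty$ follows from \eqref{eq:sum}. If $k^{1+r}\chi(z_k)\ge c>0$ for all large $k$ with $r<\frac1{\omega-1}$, I plan to contradict either this summability or the rate of Step 1. I expect this part, and reconciling the precise exponents in Step 1, to be the main obstacle.
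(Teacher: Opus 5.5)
Your Step 1 is sound, and the exponent bookkeeping you worry about is automatic. With $\beta=(2+2q)\theta'$ one has $\omega-1=(\beta-1)/(1-\theta')$, so $\Delta_k=O(k^{-1/(\beta-1)})$ is the same statement as $\varrho_k\asymp u_k=O(k^{-1/(\omega-1)})$.

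The genuine gap is Step 2. The only upper bound on $\chi(z_k)$ your recursion supplies is the pointwise one, $\chi(z_k)^{1+2q}\lesssim u_k-u_{k+1}\le u_k$. This gives $\chi(z_k)=O(k^{-1/((1+2q)(\omega-1))})$, which misses the claimed exponent by the factor $1+2q$. The other inequality, $u_k\lesssim\chi(z_k)^{(1-\theta')/\theta'}$, comes from the KL inequality and bounds $\chi$ from \emph{below}, so it cannot be chained to improve this. For the same reason, $\Delta_{k+1}$ never controls $\chi(z_{k+1})$ from above.

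Step 4 breaks for the same reason. A lower bound $\chi(z_k)\ge ck^{-(1+r)}$ with $r>0$ only bounds the terms of \eqref{eq:sum} from below by a summable sequence (e.g.\ $b_k^2\chi(z_k)^{2+2q}\gtrsim\ln^{4q}(k+1)\,k^{-2-r(2+2q)}$), so no contradiction arises. The rate for $u_k$ again only constrains $\chi$ from below.

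Step 3 is fine for the $x$-part; using $\varrho_k=O(k^{-1/(\omega-1)})$ directly even gives a better exponent. However, the term $\lambda\chi(z_k)$ in \eqref{eq:zk-xk-lip} still needs the missing rate for $\chi$.

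The missing idea is to run the recursion on the tail sum $\Gamma_k:=\sum_{i\ge k}\chi(z_i)$. Crucially, you must keep the weights $b_i$ that you discard when passing to $\min\{1,b_k^2\}\ge 1$.

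Telescope $\bigl(H(\tau_i,z_i)-H(\tau_{i+1},z_{i+1})\bigr)/\chi(z_i)\le\varrho_i-\varrho_{i+1}$ over $i\ge k$, as in \eqref{eq:kl_sum}. Using the lower bound on $\sigma\lambda\tau_i\alpha_i/2$ from \eqref{eq:sum}, this gives $\varrho_k\gtrsim\sum_{i\ge k}\min\{1,\eta^2,b_i\chi(z_i)^q,b_i^2\chi(z_i)^{2q}\}\chi(z_i)$.

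Now split the indices according to which term attains the minimum. Apply the reversed H\"older inequality with $\sum_{i\ge k}b_i^{-1/q}=\sum_{i\ge k}\frac{1}{i\ln^2(i+1)}\le\frac{1}{\ln(k-1)}$. This yields $\Gamma_k\lesssim\varrho_k^{1/(1+2q)}$. With $b_i$ replaced by $1$, the H\"older weight sum diverges and this bound is unavailable.

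Combining it with $\varrho_k\lesssim\chi(z_k)^{(1-\theta')/\theta'}=(\Gamma_k-\Gamma_{k+1})^{(1-\theta')/\theta'}$ gives $\Gamma_k^{\omega}\lesssim\Gamma_k-\Gamma_{k+1}$. Hence $\chi(z_k)\le\Gamma_k=O(k^{-1/(\omega-1)})$.

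The $\liminf$ claim then follows: $\chi(z_i)\ge ci^{-(1+r)}$ for all large $i$ would force $\Gamma_k\gtrsim k^{-r}$, contradicting $\Gamma_k=O(k^{-1/(\omega-1)})$ for $r<\frac{1}{\omega-1}$. The iterate rate follows from your Step 3 together with $\varrho_k\lesssim\chi(z_k)^{(1-\theta')/\theta'}$.
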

        \begin{proof}
        By Lemma~\ref{lemma:calc-kl-mer}, the KL inequality \eqref{eq:kl-mer} holds at $\bar z$ with exponent $\theta^\prime = \max\{\theta,\frac12\}$. Hence, all steps in the proof of Theorem~\ref{thm:kl} are applicable (ensuring $z_k \to \bar z$) and there is $K_\eta\in \N$ such that for all $k \ge K_\eta$, we have
            \begin{equation}\label{eq:klg1}
                \begin{aligned}
                    \varrho_{k}& = \varrho_H(H(\tau_k, z_k) - H(\tau_k, \bar z))  = c \Big[ \frac{c(1-\theta^\prime)}{\varrho_H^\prime(H(\tau_k, z_k) - H(\tau_k, \bar z))} \Big]^{\frac{1-\theta^\prime}{\theta^\prime}} \\
                   &\le c \Big[ \frac{c(1-\theta^\prime)}{\varrho^\prime_H(H(\tau_0, z_k) - H(\tau_0, \bar z))} \Big]^{\frac{1-\theta^\prime}{\theta^\prime}} \overset{\eqref{eq:kl-mer}}{\leq} c(c(1-\theta^\prime))^{\frac{1-\theta^\prime}{\theta^\prime}} \chi(z_k)^{\frac{1-\theta^\prime}{\theta^\prime}}.
               \end{aligned}
            \end{equation}
            Here, we used $\varrho_H(t) = c t^{1-\theta^\prime}$ and $\varrho_H(t) = c [{c(1-\theta^\prime)}/{\varrho^\prime_H(t)}]^{(1-\theta^\prime)/\theta^\prime}$ in the second equation, and the monotonicity of $\varrho^\prime_H$ and $\tau_0 \ge \tau_k$ in the first inequality. Moreover, recalling \eqref{eq:sum} in Theorem~\ref{thm:global}, it follows
            \begin{equation}\label{eq:alpha_bound}
                \frac{\sigma \lambda \tau_k \alpha_k}{2} \ge C_2 \min\{1,\eta^2, b_k\chi(z_k)^{q}, b_k^2\chi(z_k)^{2q}\}
            \end{equation}
            for some constant $C_2 >0$. We further introduce
            \begin{equation*}
            \begin{aligned}
            \mathcal{I}_1 & = \{ i \ge k \mid \min\{1,\eta^2\} < \min\{ b_i\chi(z_i)^{q}, b_i^2\chi(z_i)^{2q}\} \}\\
            \mathcal{I}_2 & = \{ i \ge k \mid b_i\chi(z_i)^{q} < \min\{1,\eta^2,   b_i^2\chi(z_i)^{2q}\} \}, \quad \mathcal{I}_3 = \{ i \ge k \mid i \notin \mathcal I_1 \cup \mathcal I_2 \}\\
            \Gamma_k & = {\sum}_{i=k}^{\infty} \chi(z_i), \quad \Gamma_{k, \mathcal{I}_j} = {\sum}_{i \in \mathcal{I}_j} \chi(z_i), \ j=1,2,3.
            \end{aligned}
            \end{equation*}
            It then follows
            \begin{equation*}
                \begin{aligned}
                    \varrho_k/C_2 & \ge {\sum}_{i=k}^{\infty} \frac{H(\tau_i, z_i)- H (\tau_{i+1}, z_{i+1})}{C_2\chi(z_i)} \overset{\eqref{eq:armijo}}{\ge} {\sum}_{i=k}^{\infty} \frac{\sigma \lambda \tau_i \alpha_i}{2C_2} \chi(z_i)\\
                    & \overset{\eqref{eq:alpha_bound}}{\ge} \min\{1,\eta^2\} {\sum}_{i \in \mathcal{I}_1} \chi(z_i) + {\sum}_{i \in \mathcal{I}_2} b_i \chi(z_i)^{1+q} + {\sum}_{i \in \mathcal{I}_3} b_i^2 \chi(z_i)^{1+2q} \\
                    & \ge \min\{1,\eta^2\} \Gamma_{k, \mathcal{I}_1} +  \Big({\sum}_{i \in \mathcal{I}_2} b_i^{-1/q}\Big)^{-q} \Gamma_{k, \mathcal{I}_2}^{1+q} + \Big({\sum}_{i \in \mathcal{I}_3} b_i^{-2/(2q)}\Big)^{-2q}\Gamma_{k, \mathcal{I}_3}^{1+2q}\\
                    & \ge \min\{1,\eta^2\} \Gamma_{k, \mathcal{I}_1} +  \ln^q(k-1) \Gamma_{k, \mathcal{I}_2}^{1+q} + \ln^{2q}(k-1)\Gamma_{k, \mathcal{I}_3}^{1+2q}.
                \end{aligned}
            \end{equation*}
            Here, the first inequality essentially follows as in \eqref{eq:kl_sum}, the fourth inequality uses the reversed H\"older inequality and the last inequality applies the bound ${\sum}_{i \in \mathcal{I}_\ell} b_i^{-1/q} \le 1/\ln(k-1)$, $\ell = 2,3$. Moreover, since $\varrho_k \to 0$ as $k \to \infty$, there exist some $k' \ge \max\{K_\eta, 3\}$ and some constant $D^\prime>0$ such that for all $k \ge k'$, it holds that
                \begin{align} \label{eq:klg2}
                   \Gamma_k & = \Gamma_{k, \mathcal{I}_1} + \Gamma_{k, \mathcal{I}_2} + \Gamma_{k, \mathcal{I}_3}\\ \nonumber
                   & \le \frac{\varrho_k}{C_2\min\{1,\eta^2\}} + \Big(\frac{\varrho_k}{C_2 \ln^q(k-1)}\Big)^{\frac{1}{1+q}} +  \Big(\frac{\varrho_k}{C_2 \ln^{2q}(k-1)}\Big)^{\frac{1}{1+2q}} \le \Big(\frac{\varrho_k}{D^\prime}\Big)^{\frac{1}{1+2q}}.
                \end{align}
            Setting $\omega = \frac{(1+2q)\theta^\prime}{1-\theta^\prime}$, $C_\theta = [(D^\prime/c)^{\theta^\prime/(1-\theta^\prime)}/(c(1-\theta^\prime))]^{-\frac{1}{\omega}}$ and combining \eqref{eq:klg1}, \eqref{eq:klg2}, this yields
            $$ \Gamma_k^{-\omega}(\Gamma_k - \Gamma_{k+1}) \ge C_\theta^{-\omega}. $$
            This recursion allows us to establish the stated rate for $\{\chi(z_k)\}$.
            The rate for $\{z_k\}$ can then be obtained by following the last part of the proof of Theorem~\ref{thm:kl}. We omit detailed computations here and refer to\cite[Proof of Theorem 2]{AttBol09} and \cite[Theorem 5.5, part (ii)]{OuyMil21}. (\cite{OuyMil21} handles the special case $q=0$.)
        \end{proof}

\begin{remark}\label{remark:liminf}
The rates in Theorem~\ref{thm:klspeed} are generally slower compared to the known KL-based rates for nonsmooth Newton methods \cite{SteThePat17,TheStePat18,OuyMil21,liu2024inexact}. This is mainly due to the adaptive nature of our globalization strategy which introduces a dependence on the globalization parameters $p, q > 0$. In contrast to \cite[Theorem 5.5, part (ii)]{OuyMil21}, we do not require a priori boundedness of the Hessian approximations $\{B_k\}$ in Theorem~\ref{thm:klspeed}.
\end{remark}

\subsection{Local Superlinear Convergence}
\label{ssec:local}
We start this subsection by first listing several local assumptions.
\begin{assumption} \label{assum3-1} Let $\{z_k\}$ be generated by Algorithm \ref{algo:main} and suppose that $\bar z \in \mathcal A$ is an accumulation point of the sequence $\{z_k\}$. We then consider: \vspace{.5ex}
    \begin{enumerate}[label=\textup{\textrm{(C.\arabic*)}},topsep=0pt,itemsep=0ex,partopsep=0ex,leftmargin=8ex]
    \item \label{D2} The function $f$ is twice continuously differentiable on $\dom{\vp}$.
    \item \label{D3} The mapping $\proxs$ is semismooth at $\bar z$.
    \item \label{D4} There exist $\kappa_M > 0$ and $K \in \N$ such that for all $k \geq K$ the matrix $D_k M_{k}$ is positive semidefinite and $M_k$ is invertible with $\|M_k^{-1}\| \leq \kappa_M$. 
    \item \label{D5} The matrices $\{B_k\}$ satisfy the following Dennis-Mor\'{e}-type condition
    \[  \lim_{k \to \infty} \frac{\|[B_k - \nabla^2 f(x_k)](x_k - \bar x)\|}{\|z_k - \bar z\|} = 0 \quad \text{where} \quad \bar x = \prox{\bar z}. \]
    \item \label{D6}The error threshold $\epsilon_k$ used in the CG method satisfies $\epsilon_k \leq \mathcal E(\Fnor{z_k})$ where $\mathcal E : \Rn \to \R_+$ is continuous with $\mathcal E(0) = 0 $ and $\mathcal E(h) = o(\|h\|)$ as $h \to 0$.
    \item \label{D7} There is $\kappa_B$ such that $\|B_k\| \le \kappa_B$ for all $k \in \N$.
    \end{enumerate}
\end{assumption}

We proceed with several remarks. Assumptions \ref{D2} and \ref{D3} are standard requirements that ensure semismoothness of the normal map. Assumption \ref{D4}, which is our main curvature and boundedness condition, is partially related to the CD-regularity (see, e.g., \cite{qi1997semismooth}) of the normal map $\oFnor$ at $\bar z$. In particular, suppose that $\oFnor$ is CD-regular at $\bar z$. By \cite{clarke1990optimization}, it holds that $\partial \Fnor{\bar z}h = \mathcal M^\lambda(\bar z)h$ for all $h \in \Rn$ and hence, every matrix $M \in \mathcal M^\lambda(\bar z)$ must be invertible. Since the set-valued mapping $\mathcal M^\lambda : \Rn \rightrightarrows \R^{n \times n}$ is upper semicontinuous and locally bounded (this follows from the continuity of $\nabla^2 f$, $\proxs$ and from the properties of $\partial\proxs$, cf.\ \cite[Proposition 2.6.2]{clarke1990optimization}), we can then infer $\|M^{-1}\| \leq \bar\kappa$ for some $\bar\kappa > 0$ and all $M \in \mathcal M^\lambda(z)$ and $z$ in a neighborhood of $\bar z$, cf.\ \cite[Proposition 3.1]{QiSun93}. Thus, in this case and if $B_k = \nabla^2 f(x_k)$, $k \in \N$, the nonsingularity condition in \ref{D4} is satisfied. We further note that positive semidefiniteness of the matrices $D_kM_k$ can be ensured if $B_k$ is positive semidefinite. \ref{D4} can also be linked to second-order optimality conditions if $B_k = \nabla^2 f(x_k)$, see  \cite[Section 7]{OuyMil21} and \cite{ouyang2024soc}. A similar version of the Dennis-Moré condition in \ref{D5} was recently used in \cite{mannelhybrid}. Both \ref{D5} and \ref{D7} are naturally satisfied when the full Hessian $B_k = \nabla^2 f(x_k)$ is used and $z_k \to \bar z$. \ref{D6} implies that the linear systems \eqref{eq3-2} are solved with increasing accuracy as $k$ increases and the tolerance parameter $\epsilon_k$ is connected to $\Fnor{z_k}$.

We now first apply \ref{D2}--\ref{D4} to provide a bound for $\|z_k-\bar z\|$ and the merit function $H$ in terms of the criticality measure $\chi(z_k)$.

\begin{lemma}\label{lemma:new}
Let \ref{A1}, \ref{D2}--\ref{D4} hold and let $\bar z \in \mathcal A$ be given. Then there exists $\bar\delta > 0$ such that for all $z_k \in B_{\bar \delta}(\bar z)$, we have
\begin{equation} \label{eq:kl-from-d4} \|z_k - \bar z\| \leq 2\kappa_M \cdot \chi(z_k) \quad \text{and} \quad H(\tau_0,z_k) - H(\tau_0,\bar z) \leq C_{\varrho}\cdot\chi(z_k)^2, \end{equation}
where $C_{\varrho} := 2\kappa_M + 2L\kappa_M^2 + \frac{\tau_0\lambda}{2}$.
\end{lemma}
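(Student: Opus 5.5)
Since $\bar z\in\mathcal A$ is an accumulation point, Theorem~\ref{thm:global} gives $\chi(\bar z)=\|\Fnor{\bar z}\|=0$ by continuity of $\oFnor$. Hence $\bar x=\prox{\bar z}\in\crit(\psi)$ and $H(\tau_0,\bar z)=\psi(\bar x)$.

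For the first inequality I will use semismoothness. By \ref{D2}--\ref{D3} and the discussion in Section~\ref{ssec:pre}, $\oFnor$ is semismooth at $\bar z$ with respect to $\mathcal M^\lambda$. I would write $M_k^* := \nabla^2 f(x_k)D_k + (I-D_k)/\lambda\in\mathcal M^\lambda(z_k)$ and expand
\[ \Fnor{z_k} = \Fnor{z_k}-\Fnor{\bar z} = M_k(z_k-\bar z) + \big[\Fnor{z_k}-\Fnor{\bar z}-M_k^*(z_k-\bar z)\big] + (M_k^*-M_k)(z_k-\bar z). \]
Then $\|z_k-\bar z\|\le \kappa_M\chi(z_k) + \kappa_M\,o(\|z_k-\bar z\|) + \kappa_M\|(M_k^*-M_k)(z_k-\bar z)\|$. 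The second term is absorbed by choosing $\bar\delta$ small, so that $\kappa_M o(\|h\|)\le\frac14\|h\|$. The last term equals $\kappa_M\|(\nabla^2 f(x_k)-B_k)D_k(z_k-\bar z)\|$, which is the main obstacle, because the lemma only assumes \ref{D2}--\ref{D4} and not \ref{D5}. My first plan is to note that $D_k(z_k-\bar z) = x_k-\bar x + o(\|z_k-\bar z\|)$, using semismoothness of $\proxs$ together with boundedness of $\partial\proxs$. That is exactly the quantity controlled by \ref{D5}, so I would invoke \ref{D5} (and \ref{D7} for the remainder) if it is implicitly in force. If only \ref{D4} may be used, I would instead read the lemma with $M_k$ replaced by $M_k^*$, i.e., assume the invertibility bound in \ref{D4} for generalized derivatives near $\bar z$, which removes this term. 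Either way the sum of error terms is at most $\frac12\|z_k-\bar z\|$ for $z_k\in B_{\bar\delta}(\bar z)$, and rearranging gives $\|z_k-\bar z\|\le 2\kappa_M\chi(z_k)$.

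For the merit bound I would write $H(\tau_0,z_k)-H(\tau_0,\bar z) = \psi(x_k)-\psi(\bar x) + \frac{\tau_0\lambda}{2}\chi(z_k)^2$. By convexity of $\vp$ and $\lambda^{-1}(z_k-x_k)\in\partial\vp(x_k)$, we have $\vp(x_k)-\vp(\bar x)\le \lambda^{-1}\iprod{z_k-x_k}{x_k-\bar x}$. The descent lemma from \ref{A1} gives $f(x_k)-f(\bar x)\le \iprod{\nabla f(x_k)}{x_k-\bar x} + \frac L2\|x_k-\bar x\|^2$. Adding the two, $\psi(x_k)-\psi(\bar x)\le \iprod{\Fnor{z_k}}{x_k-\bar x} + \frac L2\|x_k-\bar x\|^2$. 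Since $\proxs$ is nonexpansive, $\|x_k-\bar x\|\le\|z_k-\bar z\|\le 2\kappa_M\chi(z_k)$. This yields $\psi(x_k)-\psi(\bar x)\le 2\kappa_M\chi(z_k)^2 + 2L\kappa_M^2\chi(z_k)^2$, and adding $\frac{\tau_0\lambda}{2}\chi(z_k)^2$ gives the constant $C_\varrho$ stated in the lemma.
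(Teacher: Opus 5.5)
Your argument follows the paper's proof. The first bound comes from semismoothness of $\oFnor$ at $\bar z$ with respect to $\mathcal M^\lambda$, followed by inversion of $M_k$ via \ref{D4}. Your merit bound (descent lemma, $\lambda^{-1}(z_k-x_k)\in\partial\vp(x_k)$, Cauchy--Schwarz, nonexpansiveness of $\proxs$) is the paper's line by line.

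The obstacle you flag is genuine, and the paper does not address it. It applies $\sup_{M\in\mathcal M^\lambda(z)}\|\Fnor{z}-M(z-\bar z)\|\le\frac12\kappa_M^{-1}\|z-\bar z\|$ directly with $M=M_k$. This tacitly assumes $M_k\in\mathcal M^\lambda(z_k)$, which requires $B_kD_k=\nabla^2 f(x_k)D_k$.

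Under \ref{D2}--\ref{D4} alone the first inequality can fail. Take $n=1$, $\vp\equiv 0$, $f(x)=\frac12x^2$ and $B_k\equiv 4$. Then \ref{D4} holds with $\kappa_M=\frac14$, and any non-terminating run has $z_k\to\bar z=0$ with $z_k\neq 0$ by Theorem~\ref{thm:global}. Yet $|z_k|>\frac12|z_k|=2\kappa_M\chi(z_k)$. Here $[B_k-\nabla^2 f(x_k)](x_k-\bar x)=3z_k$, so \ref{D5} is exactly the missing ingredient. With $\nabla f(x)=x^3$ near $0$ and $B_k\equiv 1$, one even has $\|z_k-\bar z\|/\chi(z_k)\to\infty$, so no larger constant would rescue the bound either. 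One of your two repairs is therefore needed, not optional.

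Your repair (a) uses the same decomposition via $\nabla^2 f(x_k)D_k+\lambda^{-1}(I-D_k)$ that the paper itself uses in the proof of Theorem~\ref{thm:main-local-conv}. It covers the places where the lemma is actually invoked, since Lemma~\ref{lemma:des_local} and Theorem~\ref{thm:main-local-conv} assume \ref{D2}--\ref{D7}. It does not, however, justify the remark after the lemma that Theorems~\ref{thm:kl} and~\ref{thm:klspeed} hold under \ref{D2}--\ref{D4} only. For that you need $B_k=\nabla^2 f(x_k)$ or your variant (b).

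Two bookkeeping points for (a):
\begin{itemize}
\item \ref{D5} is an asymptotic statement along $k$, and \ref{D4} only holds for $k\ge K$. You must therefore shrink $\bar\delta$ below $\min\{\|z_k-\bar z\|:k<K'\}$. This minimum is positive because $z_k\neq\bar z$ for every $k$ of a non-terminating run. The paper ignores the $k\ge K$ restriction as well.
\item Obtaining $\chi(\bar z)=0$ from Theorem~\ref{thm:global} uses \ref{A2}. This is not among the lemma's hypotheses, but the paper uses it tacitly too.
\end{itemize}
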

\begin{proof} As mentioned in Section~\ref{ssec:pre} and combining assumptions \ref{D2} and \ref{D3}, we can infer that $\Fnors$ is semismooth at $\bar z$ with respect to the set-valued mapping $\mathcal M^\lambda$, cf.\ \cite[Theorem 7.5.17]{facchinei2007finite}. Hence, there exists $\bar\delta > 0$ such that
\[ {\sup}_{M \in \mathcal M^\lambda(z)}~\|\Fnor{z}-M(z-\bar z)\| \leq \frac12\kappa_M^{-1} \|z - \bar z\| \]
for all $z \in B_{\bar\delta}(\bar z)$, where $\kappa_M$ is taken from \ref{D4}. If $z_k \in B_{\bar\delta}(\bar z)$, this yields
\begin{align*} \|z_k - \bar z\| = \|M_k^{-1}[M_k(z_k-\bar z)]\| & \leq \kappa_M[\|\Fnor{z_k}-M_k(z_k-\bar z)\| + \|\Fnor{z_k}\|] \\ & \leq \frac12 \|z_k - \bar z\| + \kappa_M \|\Fnor{z_k}\| 
\end{align*}
and we can conclude $\|z_k-\bar z\| \leq 2\kappa_M\chi(z_k)$. Recalling $x_k=\prox{z_k}$ and $\bar x=\prox{\bar z}$, we further have
\begin{align*}
	H(\tau_0, z_k) - H(\tau_0, \bar z)
    &=\psi(\prox{z_k})-\psi(\prox{\bar z}) + \frac{\tau_0 \lambda}{2}\chi(z_k)^2\\
	& \hspace{-8ex} \le \iprod{\nabla f(x_k)}{x_k-\bar x} + \frac{L}{2}\|x_k -\bar x\|^2 + \varphi(x_k)- \varphi(\bar x) + \frac{\tau_0 \lambda}{2}\chi(z_k)^2\\
	& \hspace{-8ex} \le \iprod{\nabla f(x_k)}{x_k-\bar x} + \frac{L}{2}\|x_k -\bar x\|^2 + \frac{1}{\lambda}\iprod{z_k-x_k}{x_k -\bar x} + \frac{\tau_0 \lambda}{2}\chi(z_k)^2\\
	& \hspace{-8ex} = \iprod{\Fnor{z_k}}{x_k -\bar x} + \frac{L}{2}\|x_k -\bar x\|^2 +  \frac{\tau_0 \lambda}{2}\chi(z_k)^2\\
	& \hspace{-8ex} \le \chi(z_k)\|x_k-\bar x\| + \frac{L}{2}\|x_k -\bar x\|^2 +  \frac{\tau_0 \lambda}{2}\chi(z_k)^2 \\ & \hspace{-8ex} \leq \Big[2\kappa_M + 2L\kappa_M^2 + \frac{\tau_0\lambda}{2}\Big] \chi(z_k)^2.
\end{align*}
Here, we use the Lipschitz continuity of $\nabla f$, i.e., condition \ref{A1}, in the first inequality, $(z_k-x_k)/\lambda \in \partial \varphi(x_k)$ in the second inequality and $\|x_k-\bar x\| \leq \|z_k-\bar z\| \le 2\kappa_M \chi(z_k)$ in the last inequality. This finishes the proof.
\end{proof}

Lemma~\ref{lemma:new} implies that an iterative variant of the {\L}ojasiewicz-type property \eqref{eq:kl-mer} with $\varrho_H(t) = 2\sqrt{C_{\varrho}} t^{1/2}$ and exponent $\theta=\frac12$ is satisfied---provided that $z_k$ is sufficiently close to $\bar z$. Moreover, following the previous proofs, we can conclude that the results of Theorems~\ref{thm:kl} and \ref{thm:klspeed} still hold in this situation without explicitly requiring the KL conditions \ref{C1} or \ref{C3}. In particular, under assumptions \ref{A1}--\ref{A2}, \ref{C2}, and \ref{D2}--\ref{D4}, we have $z_k \to \bar z$ and $\liminf_{k\to\infty} k^{1+r}\chi(z_k) = 0$ for all $r \in (0,\frac{1}{2q})$. Next, invoking \cite[Lemma 6.5]{ouyang2025trust}, we establish descent properties of the merit function $H(\tau,z)$ along a sequence of directions $\{d_k\}$ that converges superlinearly with respect to $\{z_k\}$.  

\begin{lemma}\label{lemma:des_local}
    Let \ref{A1}--\ref{A2}, \ref{C2}, and \ref{D2}--\ref{D7} be satisfied and let $\{\bar d_k\}$ be a superlinearly convergent sequence in the sense 
    \begin{equation*} 
    \|z_k + \bar d_k - \bar z\| = o(\|z_k - \bar z\|) \quad k \to \infty. \end{equation*}
    Then, there exists $\bar \sigma > 0$ such that for every $\eta \in (0,1)$ there exists a constant $K_\eta \geq K$ where for all $k \geq K_\eta$, we have
    %
    \be \label{eq:p-des} H(\tau_k,z_k+\bar d_k)-H(\tau_k, z_k) \leq - \frac{\bar \sigma}{2\lambda} \|\prox{z_k+\bar d_k} - x_k\|^2 -\frac{\eta \lambda \tau_k}{2}\chi(z_k)^2. \ee
\end{lemma}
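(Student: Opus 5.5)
The plan is to reduce \eqref{eq:p-des} to an asymptotic comparison around $\bar z$, in the spirit of \cite[Lemma 6.5]{ouyang2025trust}. The only new feature compared to that lemma is that $\tau$ is replaced by the adaptive parameters $\tau_k$. Since $\{\tau_k\}$ is non-increasing and $\tau_k\ge\tau_{\min}>0$ (see the proof of Theorem~\ref{thm:global}), every constant can be made uniform in $k$. As remarked after Lemma~\ref{lemma:new}, the assumptions guarantee $z_k\to\bar z$ and $x_k\to\bar x$. Set $w_k:=z_k+\bar d_k$, $x_k^+:=\prox{w_k}$ and $h_k:=z_k-\bar z$. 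The difference then splits as
\[ H(\tau_k,w_k)-H(\tau_k,z_k)=[\psi(x_k^+)-\psi(\bar x)]-[\psi(x_k)-\psi(\bar x)]+\tfrac{\tau_k\lambda}{2}\big(\chi(w_k)^2-\chi(z_k)^2\big), \]
and I will treat the three pieces separately.

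\emph{Step 1 (normal map term).} $\oFnor$ is Lipschitz with modulus $L+2/\lambda$ under \ref{A1}, and $\Fnor{\bar z}=0$. Hence $\chi(w_k)\le (L+2/\lambda)\|w_k-\bar z\|=o(\|h_k\|)$. Lemma~\ref{lemma:new} gives $\|h_k\|\le 2\kappa_M\chi(z_k)$, so $\chi(w_k)=o(\chi(z_k))$. Consequently
\[ \tfrac{\tau_k\lambda}{2}\big(\chi(w_k)^2-\chi(z_k)^2\big)\le-\tfrac{\tau_k\lambda}{2}(1-o(1))\chi(z_k)^2. \]

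\emph{Step 2 (first $\psi$ term).} I estimate $\psi(x_k^+)-\psi(\bar x)$ exactly as in the proof of Lemma~\ref{lemma:new}, with $w_k$ in place of $z_k$. This uses the subgradient $(w_k-x_k^+)/\lambda\in\partial\vp(x_k^+)$ and the descent lemma for $f$. The result is
\[ \psi(x_k^+)-\psi(\bar x)\le\chi(w_k)\|x_k^+-\bar x\|+\tfrac L2\|x_k^+-\bar x\|^2=o(\|h_k\|^2), \]
since $\|x_k^+-\bar x\|\le\|w_k-\bar z\|=o(\|h_k\|)$. Moreover, $x_k^+-x_k=\bar x-x_k+o(\|h_k\|)$ and, by semismoothness of $\proxs$ (\ref{D3}), $x_k-\bar x=D_kh_k+o(\|h_k\|)$. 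Therefore $\|x_k^+-x_k\|^2=\|D_kh_k\|^2+o(\|h_k\|^2)$.

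\emph{Step 3 (second $\psi$ term, the main obstacle).} Here I need the lower bound
\[ \psi(x_k)-\psi(\bar x)\ge c\,\|D_kh_k\|^2-o(\|h_k\|^2) \]
for some $c>0$, since otherwise the term $-\frac{\bar\sigma}{2\lambda}\|x_k^+-x_k\|^2$ cannot be produced. I will expand $f$ to second order at $x_k$ using \ref{D2}, so that $f(\bar x)=f(x_k)+\iprod{\nabla f(x_k)}{\bar x-x_k}+\tfrac12\iprod{\nabla^2f(x_k)(\bar x-x_k)}{\bar x-x_k}+o(\|h_k\|^2)$. For $\vp$, I use convexity in the form $\vp(\bar x)\ge\vp(x_k)+\iprod{(z_k-x_k)/\lambda}{\bar x-x_k}$, which holds because $(z_k-x_k)/\lambda\in\partial\vp(x_k)$. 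Combining these gives $\psi(x_k)-\psi(\bar x)\ge \iprod{\Fnor{z_k}}{x_k-\bar x}-\tfrac12\iprod{\nabla^2f(x_k)(x_k-\bar x)}{x_k-\bar x}-o(\|h_k\|^2)$. Next I substitute $\Fnor{z_k}=M_kh_k+o(\|h_k\|)$, valid after replacing $B_k$ by $\nabla^2 f(x_k)$ via the Dennis--Mor\'e condition \ref{D5} together with \ref{D7}, and $x_k-\bar x=D_kh_k+o(\|h_k\|)$. The right-hand side then becomes
\[ \iprod{D_kh_k}{B_kD_kh_k}+\tfrac1\lambda\iprod{D_kh_k}{(I-D_k)h_k}-\tfrac12\iprod{B_kD_kh_k}{D_kh_k}+o(\|h_k\|^2) = \tfrac12\iprod{h_k}{D_kM_kh_k}+\tfrac1{2\lambda}\iprod{D_kh_k}{(I-D_k)h_k}+o(\|h_k\|^2). \]
By \ref{D4}, $D_kM_k\succeq0$. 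To turn this into a positive multiple of $\|D_kh_k\|^2$, I plan to trade a fraction of $D_kM_k$ against the uniformly positive term $\frac1\lambda\iprod{D_kh_k}{(I-D_k)h_k}$; here I use that $D_k$, $I-D_k\succeq0$ commute and that $\|B_k\|\le\kappa_B$. If this fails to give a strictly positive constant, the fallback is to spend part of $\tau_{\min}\lambda\chi(z_k)^2/2$. This is possible because $\|D_kh_k\|\le\|h_k\|\le2\kappa_M\chi(z_k)$, which is why $\bar\sigma$ may be chosen independently of $\eta$ but small.

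\emph{Step 4 (conclusion).} Combining Steps 1--3, the remainders $o(\|h_k\|^2)=o(\chi(z_k)^2)$ and the $1-\eta$ slack in $\tfrac{\tau_k\lambda}{2}\chi(z_k)^2$ absorb all error terms for $k\ge K_\eta$. Here I use $\tau_k\ge\tau_{\min}$ so that the slack is bounded below uniformly. This yields \eqref{eq:p-des} with some $\bar\sigma>0$ depending only on $\lambda,\kappa_M,\kappa_B,L$ and $\tau_{\min}$.
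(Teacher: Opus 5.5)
The paper does not redo this computation. It imports the fixed-$\tau$ estimate from \cite[Lemma 6.5]{ouyang2025trust} and only performs the absorption in your Step 4. Your Steps 1, 2 and 4 are sound, but Step 3 has a sign error that breaks the argument. The inequality $\vp(\bar x)\ge\vp(x_k)+\iprod{(z_k-x_k)/\lambda}{\bar x-x_k}$ rearranges to $\vp(x_k)-\vp(\bar x)\le\iprod{(z_k-x_k)/\lambda}{x_k-\bar x}$. This is an \emph{upper} bound (the one used in Lemma~\ref{lemma:new}), so your lower bound for $\psi(x_k)-\psi(\bar x)$ does not follow.

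That bound is in fact false: it exceeds the true value by $\frac{1}{2\lambda}\iprod{D_kh_k}{(I-D_k)h_k}+o(\|h_k\|^2)$. For $f\equiv0$, $\vp=\frac a2\|\cdot\|^2$ and $\bar z=0$, one has $D_k=(1+\lambda a)^{-1}I$ and $\psi(x_k)-\psi(\bar x)=\frac{a}{2(1+\lambda a)^2}\|h_k\|^2$, while your bound gives twice this. Using a subgradient at $\bar x$ instead gives a valid but too weak bound, because it discards the curvature of $\vp$. What you need is the genuine second-order expansion
\[ \psi(x_k)-\psi(\bar x)=\frac{1}{2}\iprod{h_k}{D_kM_kh_k}+o(\|h_k\|^2). \]
One way to get it is to write $\vp(\prox{z})=\mathrm{env}_{\lambda\vp}(z)-\frac{1}{2\lambda}\|z-\prox{z}\|^2$ and use:
\begin{itemize}
\item semismoothness of $\nabla\mathrm{env}_{\lambda\vp}=(I-\proxs)/\lambda$ at $\bar z$ (the second-order expansion of SC$^1$ functions);
\item Taylor's theorem for $f$;
\item \ref{D5} and \ref{D7} to pass from $\nabla^2 f(x_k)$ to $B_k$.
\end{itemize}

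Your route to a positive constant then also collapses. The term $\frac{1}{\lambda}\iprod{D_kh_k}{(I-D_k)h_k}$ you planned to trade against is exactly the spurious one. It is also not uniformly positive relative to $\|D_kh_k\|^2$: for $\vp=\mu\|\cdot\|_1$ one has $D_k(I-D_k)=0$. The missing idea is that \ref{D4} and \ref{D7} already give strict positivity on $\mathrm{range}(D_k)$. Set $S_k:=D_kM_k=M_k^\top D_k\succeq0$. Then $\|D_kh\|=\|M_k^{-\top}S_kh\|\le\kappa_M\|S_k\|^{1/2}\iprod{h}{S_kh}^{1/2}$ and $\|S_k\|\le\kappa_B+1/\lambda$. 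Hence $\iprod{h}{D_kM_kh}\ge\|D_kh\|^2/(\kappa_M^2(\kappa_B+1/\lambda))$. Combined with $\|\prox{z_k+\bar d_k}-x_k\|^2=\|D_kh_k\|^2+o(\|h_k\|^2)$, this yields the $\eta$-independent constant $\bar\sigma=\lambda/(\kappa_M^2(\kappa_B+1/\lambda))$.

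Your fallback cannot replace this. After Step 1, the only slack in the $\chi(z_k)^2$ term is $(1-\eta)\frac{\tau_k\lambda}{2}\chi(z_k)^2$, which vanishes as $\eta\uparrow1$. Spending it gives only $\bar\sigma\sim(1-\eta)\tau_{\min}\lambda^2/\kappa_M^2$, which violates the order $\exists\,\bar\sigma\ \forall\,\eta$ in the statement. Such an $\eta$-dependent version would suffice for Theorem~\ref{thm:main-local-conv}, where $\eta=\sigma$ is fixed, but it is weaker than the lemma.
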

\begin{proof}
As mentioned and thanks to Lemma~\ref{lemma:new}, the results of Theorem~\ref{thm:kl} are still valid under \ref{A1}--\ref{A2}, \ref{C2}, and \ref{D2}--\ref{D4} and we have $z_k \to \bar z$. As a consequence, \cite[Lemma 6.5]{ouyang2025trust} is applicable, which establishes \eqref{eq:p-des} for fixed $\tau_k = \tau$. However, by the proof of \cite[Lemma 6.5]{ouyang2025trust}, we can obtain the following immediate extension: there are $\bar \sigma >0$ and $K_{\bar \sigma} \in \N$ such that 
      \begin{align} 
         H(\tau_k, z_k + \bar d_k)  & \leq H(\tau_k,z_k)- \frac{\bar\sigma}{2\lambda} \|\hat x_{k+1}-x_k\|^2 -\frac{\tau_k \lambda}{2}\chi(z_k)^2 + o( \|z_k - \bar z\|^2),  \label{eq:barsigma} 
      \end{align}
      for all $k \ge K_{\bar \sigma}$, where $\hat x_{k+1} = \prox{x_k + \bar d_k}$. Recalling $\tau_{\min}$ from the proof of Theorem~\ref{thm:global}, there is $\N \ni K_\eta \ge K_{\bar \sigma}$ such that we have $o( \|z_k - \bar z\|^2) \le c \|z_k - \bar z\|^2$ for all $k \ge K_\eta$ with $c=(1-\eta)\tau_{\min} \lambda/(8\kappa_M^2)$. Hence, combining \eqref{eq:barsigma}, \eqref{eq:kl-from-d4} (from Lemma~\ref{lemma:new}) and $\tau_k \ge \tau_{\min}$, \eqref{eq:p-des} is true for all $k \ge K_\eta$.
\end{proof}

We now consider the case $\texttt{flag} = \texttt{SO}$ and define $\bar s_k := s_k(1) = \lambda (d_k +e_k)$. The next lemma establishes a bound of $\|\bar s_k\|$ in terms of $\chi(z_k)$. 

\begin{lemma}
    \label{lemma:bound} Let \ref{D4} and \ref{D7} be satisfied. Setting $\kappa_s :=\max\{\lambda, \kappa_M (2+ \lambda\kappa_B)\}$, it holds that
    \[ \|\bar s_k\| \leq \kappa_s \chi(z_k) \quad \forall~k \geq K. \]
\end{lemma}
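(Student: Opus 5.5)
The plan is to write $\bar s_k$ as the vector $s$ from Lemma~\ref{lemma3-8}~(iii), compute the residual $M_k\bar s_k + \Fnor{z_k}$ exactly, and then invert $M_k$ using \ref{D4}. The constant $\kappa_s$ suggests two cases. When the step is first order, $s_k(1)=\lambda d_k$ and trivially $\|s_k(1)\|=\lambda\chi(z_k)$; this accounts for the entry $\lambda$ in $\kappa_s$. The substantive case is $\bar s_k=\lambda(d_k+e_k)$.

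\textbf{Step 1 (residual identity).} Set $r_k := M_kq_k + \Fnor{z_k}$. From $e_k = q_k/\lambda - M_kq_k$ and $d_k=-\Fnor{z_k}$ we get $\bar s_k = q_k - \lambda r_k$. Then
\[ M_k\bar s_k + \Fnor{z_k} = r_k - \lambda M_k r_k = (I-\lambda M_k)r_k = (I-\lambda B_k)D_k r_k, \]
using $\lambda M_k = \lambda B_kD_k + I - D_k$. This is the computation behind Lemma~\ref{lemma3-8}~(iii).

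\textbf{Step 2 (invert $M_k$).} For $k\ge K$, assumption \ref{D4} gives invertibility of $M_k$ with $\|M_k^{-1}\|\le\kappa_M$. Writing $\bar s_k = M_k^{-1}\big[-\Fnor{z_k} + (I-\lambda B_k)D_kr_k\big]$ and using \ref{D7}, we obtain
\[ \|\bar s_k\| \le \kappa_M\big[\chi(z_k) + (1+\lambda\kappa_B)\|D_kr_k\|\big]. \]
So it remains to show $\|D_kr_k\|\le\chi(z_k)$, which gives the factor $\kappa_M(2+\lambda\kappa_B)$.

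\textbf{Step 3 (control of the CG residual; main obstacle).} This is the one place where the specific behavior of CG matters, since CG residuals are not monotone in general. I would argue from the structure of Algorithm~\ref{algo:CG}.

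\emph{Starting point.} CG starts at $q=0$, whose residual is $D_k\Fnor{z_k}$. Since $\proxs$ is nonexpansive, every $D_k\in\partial\prox{z_k}$ satisfies $\|D_k\|\le 1$, so $\|D_k\Fnor{z_k}\|\le\chi(z_k)$.

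\emph{Immediate stop.} If $\epsilon_k\ge\|D_k\Fnor{z_k}\|$, CG stops at once with $q_k=0$, hence $D_kr_k=D_k\Fnor{z_k}$ and the bound holds.

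\emph{Otherwise.} By \ref{D4} and Lemma~\ref{lemma3-8}~(ii), negative curvature is never detected for $k\ge K$. CG therefore terminates with $\|D_kr_k\|\le\epsilon_k<\|D_k\Fnor{z_k}\|\le\chi(z_k)$.

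If the CG variant in the appendix can return at a nonpositive-curvature test before the tolerance test, I would rule this out using positive semidefiniteness of $D_kM_k$ together with the conjugacy of the $p_i$, restricted to $\mathrm{range}(D_k)$.

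Combining the two cases gives $\|s_k(1)\|\le\max\{\lambda,\kappa_M(2+\lambda\kappa_B)\}\chi(z_k)$ for all $k\ge K$.
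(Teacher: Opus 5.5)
Your proof is correct and uses the same ingredients as the paper's: the residual identity behind Lemma~\ref{lemma3-8}~(iii), Lemma~\ref{lemma3-8}~(ii) under \ref{D4}, $\|D_k\|\le 1$, and $\|M_k^{-1}\|\le\kappa_M$. The only difference is that the paper splits on $\epsilon_k>\chi(z_k)$ versus $\epsilon_k\le\chi(z_k)$, whereas you bound $\|D_kr_k\|\le\chi(z_k)$ uniformly by splitting on the actual CG stopping test. One small correction: $\bar s_k$ is by definition $\lambda(d_k+e_k)$ regardless of the flag, and the entry $\lambda$ in $\kappa_s$ comes from the immediate CG stop ($q_k=e_k=0$, so $\bar s_k=\lambda d_k$), not from a first-order step. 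Your separate FO case is therefore unnecessary, though harmless, since Steps 1--3 already bound $\lambda(d_k+e_k)$ unconditionally.
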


\begin{proof} The proof is similar to the derivation of \cite[Lemma 6.6]{ouyang2025trust} and is included for completeness. If $\epsilon_k > \chi(z_k)$, it follows $\|D_k\Fnor{z_k}\| \leq \|D_k\| \chi(z_k) < \epsilon_k$ (due to $\|D_k\| \leq 1$). Thus, in this case, the CG method terminates in the first step with $e_k = q_k = q_0 = 0$ and $\|\bar s_k\| = \lambda\|d_k\| = \lambda \chi(z_k)$. On the other hand, if $\epsilon_k \leq \chi(z_k)$, then by Lemma~\ref{lemma3-8} (ii), we obtain $\|D_k(M_kq_k+\Fnor{z_k})\| \leq \epsilon_k$. Invoking Lemma~\ref{lemma3-8} (iii) and $\bar s_k = \lambda (d_k + e_k) = q_k - \lambda(M_kq_k + \Fnor{z_k})$, this yields $\|M_k\bar s_k + \Fnor{z_k}\| \leq \|I-\lambda B_k\|\epsilon_k \leq (1+\lambda\kappa_B)\chi(z_k)$ and $\|\bar s_k\| \leq \kappa_M \|M_k\bar s_k\| \leq \kappa_M(2+\lambda\kappa_B)\chi(z_k)$. This finishes the proof.
\end{proof}
We now present a linesearch-based counterpart to \cite[Theorem 6.2]{ouyang2025trust} and prove the fast local convergence of Algorithm~\ref{algo:main}.

\begin{thm} \label{thm:main-local-conv}
    Suppose that the conditions \ref{A1}--\ref{A2}, \ref{C2}, and \ref{D2}--\ref{D7} are satisfied and let $\{z_k\}$ be generated by Algorithm~\ref{algo:main}. Furthermore, let us assume that the method does not terminate after finitely many steps. 
    Then, we have:
    \begin{itemize}
    \item[\rmn{(i)}] There exists $\bar k \in \N$ such that for all $k \geq \bar k$, the algorithm always uses the second-order direction $\bar s_k$ with step size $\alpha_k=1$. Moreover, the sequence $\{z_k\}$ converges q-superlinearly to the limit point $\bar z$. 
    \item[\rmn{(ii)}] In addition, if $\proxs$ is $\beta$-order semismooth at $\bar z$ for $\beta \in (0,1]$, the function $\mathcal E$ in \ref{D6} satisfies $\mathcal E(h) = O(\|h\|^{1+\beta})$ as $h \to 0$, and if we choose $B_k = \nabla^2 f(x_k)$ and $\nabla^2 f$ is Lipschitz continuous near $\bar x$, then the rate of convergence is of order $1+\beta$. 
    \end{itemize}
\end{thm}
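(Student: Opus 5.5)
We plan to show three things, in order: that $\bar s_k$ is a superlinearly convergent step towards $\bar z$, that the gradient-related test \eqref{eq:cur} is eventually passed, and that the Armijo condition \eqref{eq:armijo} is eventually accepted with $\alpha=1$.

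\textbf{Step 1: superlinear accuracy of $\bar s_k$.} By Lemma~\ref{lemma:new} and the remark following it, Theorem~\ref{thm:kl} applies under the stated hypotheses, so $z_k\to\bar z$. For $k\ge K$ we use \ref{D4}, Lemma~\ref{lemma3-8}(ii)--(iii), \ref{D6} and \ref{D7} to write
\[ \|z_k+\bar s_k-\bar z\|\le \kappa_M\bigl[\|\Fnor{z_k}-M_k(z_k-\bar z)\| + (1+\lambda\kappa_B)\epsilon_k\bigr]. \]
(The degenerate case $\epsilon_k>\chi(z_k)$ cannot occur for large $k$, since $\epsilon_k=o(\chi(z_k))$.) Let $\bar M_k:=\nabla^2 f(x_k)D_k+(I-D_k)/\lambda\in\mathcal M^\lambda(z_k)$. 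Semismoothness of $\oFnor$ at $\bar z$, from \ref{D2}--\ref{D3}, gives $\|\Fnor{z_k}-\bar M_k(z_k-\bar z)\|=o(\|z_k-\bar z\|)$. It remains to bound
\[ (M_k-\bar M_k)(z_k-\bar z)=(B_k-\nabla^2 f(x_k))D_k(z_k-\bar z). \]
This is the delicate point: \ref{D5} controls $(B_k-\nabla^2 f(x_k))(x_k-\bar x)$, not $D_k(z_k-\bar z)$. Semismoothness of $\proxs$ gives $x_k-\bar x=D_k(z_k-\bar z)+o(\|z_k-\bar z\|)$, and combining this with \ref{D7} and the boundedness of $\nabla^2 f$ near $\bar x$ yields the needed $o(\|z_k-\bar z\|)$. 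Since $\epsilon_k=o(\chi(z_k))=o(\|z_k-\bar z\|)$ (as $\chi$ is Lipschitz near $\bar z$ and $\chi(\bar z)=0$), we conclude $\|z_k+\bar s_k-\bar z\|=o(\|z_k-\bar z\|)$.

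\textbf{Step 2: the test \eqref{eq:cur} eventually passes.} From Lemma~\ref{lemma:bound}, $\|\lambda e_k\|\le\|\bar s_k\|+\lambda\chi(z_k)\le(\kappa_s+\lambda)\chi(z_k)$. So \eqref{eq:cur} holds as soon as $\eta_k\le \lambda/(\kappa_s+\lambda)$. We have $\eta_k\le b_k\chi(z_k)^q$, but $b_k\to\infty$, and this is the main obstacle: we must show $b_k\chi(z_k)^q\to0$ eventually, and not merely along a subsequence.

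We first try a bootstrap. By the remark after Lemma~\ref{lemma:new}, $\liminf k^{1+r}\chi(z_k)=0$, which gives one index $k_0$ with $b_{k_0}\chi(z_{k_0})^q$ small, hence \texttt{SO} at $k_0$. If in addition $\alpha_{k_0}=1$ (Step 3), then Step 1 together with $\|z_k-\bar z\|\le2\kappa_M\chi(z_k)$ and the Lipschitz bound $\chi(z)\le C\|z-\bar z\|$ gives $\chi(z_{k_0+1})\le \epsilon\,\chi(z_{k_0})$ for an arbitrarily small $\epsilon$. Since $b_{k+1}/b_k\to1$, the quantity $b_k\chi(z_k)^q$ then decreases, and induction keeps \texttt{SO} with $\alpha_k=1$ for all $k\ge k_0$.

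\textbf{Step 3: acceptance of the full step.} Apply Lemma~\ref{lemma:des_local} with $\bar d_k=\bar s_k$ and a fixed $\eta\in(\sigma,1)$. This yields the descent $-\frac{\eta\lambda\tau_k}{2}\chi^2-\frac{\bar\sigma}{2\lambda}\|p_1-x_k\|^2$ at $\tau=\tau_k$. To match \eqref{eq:armijo} we need $\nu_k\le\bar\sigma/2$. This follows from \ref{C2}, because $a_k^2\|p_1-x_k\|^{2p}\le a_k^2\kappa_s^{2p}\chi(z_k)^{2p}$, which is small in the same bootstrap regime (choosing $\chi$ decay alongside $b_k$). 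One further point needs care: $\tau_k^{(0)}$ may be smaller than $\tau_{k-1}$, so the lemma should be applied with the trial value $\tau_k^{(0)}\ge\tau_{\min}$. Its proof only uses $\tau\in[\tau_{\min},\tau_0]$, so this is allowed.

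\textbf{Part (ii).} Under the stronger assumptions we replace each $o(\cdot)$ above by $O(\|z_k-\bar z\|^{1+\beta})$: $\beta$-order semismoothness, Lipschitz continuity of $\nabla^2 f$ with $B_k=\nabla^2 f(x_k)$ (so the Dennis--Moré term vanishes), and $\epsilon_k=O(\chi^{1+\beta})$. This gives the rate of order $1+\beta$.
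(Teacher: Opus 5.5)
Your proposal is correct and follows essentially the same route as the paper. The common ingredients are the splitting of $(B_k-\nabla^2 f(x_k))D_k(z_k-\bar z)$ via semismoothness of $\proxs$, \ref{D5} and \ref{D7}; thresholds on $b_k\chi(z_k)^q$ for the test and on $a_k\chi(z_k)^p$ (giving $\nu_k\le\bar\sigma/2$ in Lemma~\ref{lemma:des_local}); and an induction started from $\liminf k^{1+r}\chi(z_k)=0$ using $\chi(z_{k+1})=o(\chi(z_k))$ and $a_{k+1}/a_k\to1$. Your phrase ``same bootstrap regime'' is vague, but it is settled as in the paper: $a_k\chi(z_k)^p$ and $b_k\chi(z_k)^q$ are powers of the same quantity $k\ln^2(k+1)\chi(z_k)$, so w.l.o.g.\ $p\le q$. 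Your extra care about the missing factor $\lambda$ in the test and about applying Lemma~\ref{lemma:des_local} at the trial value $\tau_k^{(0)}\in[\tau_{\min},\tau_0]$ is a valid refinement of the paper's argument.
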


\begin{proof}   
    By Theorem~\ref{thm:global} and invoking Theorems~\ref{thm:kl} and \ref{thm:klspeed} under \ref{D2}--\ref{D4}, it follows $z_k \to \bar z$, $\chi(z_k) \to \chi(\bar z)=0$, and $\liminf_{k\to\infty} k^{1+r}\chi(z_k) = 0$, $r \in (0,\frac{1}{2q})$.
    We first mimic the discussion in the proof of \cite[Theorem 6.2]{ouyang2025trust}. By \ref{D4} and Lemma~\ref{lemma3-8} (ii), the CG method will return an $\epsilon_k$-accurate solution of \eqref{eq3-1} if $k \geq K$. Similar to the proof of Lemma~\ref{lemma:bound} and using Lemma~\ref{lemma3-8} (iii), \ref{D4}, and \ref{D6}, this implies
\begin{align*}
    \|z_k + \bar s_k - \bar z\| & \leq \kappa_M[\|\Fnor{z_k}-\Fnor{\bar z}- M_k(z_k-\bar z)\| + \|\Fnor{z_k}+M_k\bar s_k\|] \\ & \leq \kappa_M[\|\Fnor{z_k}-\Fnor{\bar z}- H_k(z_k-\bar z)\| + (1+\lambda \kappa_B) \mathcal E(\Fnor{z_k}) \\ & \hspace{-4ex} + \|\nabla^2 f(x_k)-B_k\|\|x_k-\bar x - D_k(z_k-\bar z)\| + \|[\nabla^2f(x_k)-B_k](x_k-\bar x)\|]
\end{align*}
for all $k \geq K$, where $H_k := \nabla^2 f(x_k)D_k + {\lambda^{-1}}(I-D_k)$. As argued in the proof of Lemma~\ref{lemma:new}, $\Fnors$ is semismooth at $\bar z$. Moreover, the Lipschitz continuity of $\nabla f$ and $\oprox$ implies that $\oFnor$ is Lipschitz continuous (with some constant $L_F$). Hence, by \ref{D6}, we have $\mathcal E(\Fnor{z_k}) = o(\|z_k-\bar z\|)$ and using \ref{D2},\ref{D3},\ref{D5}, and \ref{D7}, we may infer
    \begin{equation}\label{eq:superlinear}
        \|z_k+ \bar s_k - \bar z \| = o(\|z_k - \bar z\|),\quad k \to \infty.
    \end{equation}
    To prove part (i), it then suffices to verify that the gradient-related test \eqref{eq:cur} and linesearch condition \eqref{eq:armijo} hold for $\bar s_k$ when $k$ is sufficiently large. If $b_k\chi(z_k)^q \le  \frac{1}{\kappa_s +\lambda}:=\epsilon_q$, where $\kappa_s$ is defined in Lemma~\ref{lemma:bound}, we have
    $$ \|e_k\| \le \|\bar s_k\| + \|\lambda d_k\| \le (\kappa_s +\lambda) \chi(z_k) \le \chi(z_k)/(b_k \chi(z_k)^{q})  \le  \chi(z_k)/\eta_k,$$
    i.e.,  \eqref{eq:cur} holds.
    Moreover, if we have $k \ge K_\eta$ and $a_k\chi(z_k)^p \le \frac{ \sqrt{\bar \sigma/2}}{\kappa_s^p}:=\epsilon_p$, then \eqref{eq:armijo} is satisfied by using \eqref{eq:p-des} with $\eta=\sigma$ and 
    \begin{align*}
     \nu_k &\le a_k^2\|\prox{z_k + \bar s_k}-\prox{z_k}\|^{2p} \le a_k^2 \|\bar s_k\|^{2p} \le \kappa_s^{2p} a_k^2 \chi(z_k)^{2p} \le \bar \sigma/2.
    \end{align*}
    Here, $K_\eta$ and $\bar \sigma$ are defined in Lemma~\ref{lemma:des_local}. 
  By the choice of $a_k$, we also have
    \begin{equation}\label{eq:limsup}
        \limsup_{k \to \infty} \frac{a_k}{k^{p(1+r)}}  = \limsup_{k \to \infty} \frac{ (k \ln^2 (k+1))^{p}}{k^{p(1+r)}} = \limsup_{k \to \infty} \frac{\ln^{2p} (k+1)}{k^{p r}} =0.
    \end{equation}
    Combining $\liminf_{k\to\infty} k^{1+r}\chi(z_k) = 0$, $r \in (0,\frac{1}{2q})$, and \eqref{eq:limsup}, this yields
    \begin{equation}\label{eq:liminf2} 
        \liminf_{k\to \infty} a_k\chi(z_k)^p  \le  \limsup_{k\to \infty} \frac{a_k}{k^{p(1+r)}} \liminf_{k\to \infty}~(k^{1+r}\chi(z_k))^p =0.
    \end{equation} 
    Without loss of generality, we now assume $p \le q$ and denote the set $S:=\{k \ge K_\eta : a_k\chi(z_k)^p \le \min\{\epsilon_p,\epsilon_q, 1\}\}$. Thanks to \eqref{eq:liminf2}, $S$ is nonempty and has infinitely many elements. Moreover, for all $k \in S$, due to $a_k\chi(z_k)^p \le 1$ and $p \le q$, it follows $b_k\chi(z_k)^q \le a_k\chi(z_k)^p \le \epsilon_q$. Thus, we can infer $s_k(\alpha_k)=\bar s_k$ for all $k \in S$. 
    %
    %
    Using the Lipschitz continuity of $\Fnors$, \eqref{eq:kl-from-d4}, and \eqref{eq:superlinear}, we have
    \begin{equation}\label{eq:gksuperlinear}
        \chi(z_k+\bar s_k) \leq L_F \|z_k + \bar s_k - \bar z\|  = o(\|z_k-\bar z\|) =  o(\chi(z_k))
    \end{equation}
    for sufficiently large $k\in S$. Since $\lim_{k \to \infty} a_k/a_{k+1} =1$, \eqref{eq:gksuperlinear} implies that there is $\N \ni K^\prime \ge K_\eta$ such that $\chi(z_{k+1})^p \le (a_k/a_{k+1}) \chi(z_k)^p$ for all $k \ge K^\prime$ and $k \in S$. Let $\bar k \in \N$ be the first iteration satisfying $\bar k \ge K^\prime$ and $\bar k \in S$; we obtain:
        $$a_{\bar{k}+1}\chi(z_{\bar k+1})^{p} \le a_{\bar{k}+1} (a_{\bar{k}}/a_{\bar{k}+1})\chi(z_{\bar k})^{p} = a_{{\bar{k}}} \chi(z_{\bar k})^{p} \le \min\{\epsilon_p,\epsilon_q, 1\}.$$
     This shows $\bar{k}+1 \in S$ and by induction, it holds that $k \in S$ and $s_k(\alpha_k) =\bar s_k$ for all $ k \ge \bar k$. Thus, the stepsize $\alpha_k=1$ is accepted for all sufficiently large $k$ and the proof of part (i) is finished. The additional conditions in part (ii) of Theorem~\ref{thm:main-local-conv} imply that $\oFnor$ is $\beta$-order semismooth at $\bar z$. The estimate in \eqref{eq:superlinear} can then be improved to $\|z_k+\bar s_k - \bar z\| \leq O(\|z_k-\bar z\|^{1+\beta})$ which proves convergence of order $1+\beta$. 
\end{proof}


\section{Numerical Experiment}\label{sec:numerical}
In this section, we first list several implementational details. We demonstrate the efficiency of the proposed algorithm on a sparse logistic regression, a nonconvex image compression, and a nonlinear least square problem with group sparsity. All tests are performed on Matlab and a MacBook Pro with 2 GHz Quad-Core Intel Core i5 and 16 GB memory.

\subsection{Implementational Details}
\label{ssec:parameter}
We reuse some of the general settings from the original trust region-based normal map semismooth Newton method, cf. \cite[Section 7.1]{ouyang2025trust}. Specifically, we generate a comparable pair of initial points $x_0$ and $z_0$ to compare our normal map-based algorithm with other approaches. For given $x_0 \in \Rn$, we determine a corresponding initial point via $z_0 = \argmin_{\prox{z}=x_0}\|\Fnor{z}\|$. In our tests, such $z_0$ can be easily computed. Moreover, since we mainly consider large-scale problems, we use L-BFGS updates to approximate $\nabla^2 f$. Based on \cite{byrd1994representations}, we implement the following compact form of the (L-)BFGS scheme
\begin{equation*}
\begin{aligned}
S_k&=[\hat s_{k-m},\dots,\hat s_{k-1}], \quad Y_k=[y_{k-m},\dots,y_{k-1}],\\
B_{k}& =\gamma_k I-\begin{bmatrix} S_k & Y_k  \end{bmatrix} {\begin{bmatrix} \frac{1}{\gamma_k}S_k^\top S_k  & \frac{1}{\gamma_k}\mathcal{L}_k \\  \frac{1}{\gamma_k}\mathcal{L}_k^\top  & -\mathcal D_k  \end{bmatrix}}^{-1} \begin{bmatrix} S_k^\top  \\ Y_k^\top   \end{bmatrix},
\end{aligned}
\end{equation*}
where $\hat s_k =x_{k+1}-x_k $, $y_k=\nabla f(x_{k+1}) -\nabla f(x_k)$, $\mathcal{L}_k$ is the strictly lower part of $S_k^\top Y_k$, $\mathcal D_k$ is the diagonal part of $S_k^\top Y_k$, and $m \in \N$ is a memory parameter. We choose  $\gamma_k= \iprod{y_k}{y_k}/\iprod{\hat s_k}{y_k}$. In our tests, if an algorithm utilizes a quasi-Newton technique, then we apply L-BFGS approximations with memory $m=10$.  In the following, we will refer to Algorithm~\ref{algo:main} with L-BFGS approximations and full Hessians as LSSSN and LSSSN-H, respectively. Similarly, its trust region counterparts from \cite{ouyang2025trust} (with L-BFGS approximations and full Hessians) are referred to as TRSSN and TRSSN-H.

Evaluating the merit function $H(\tau, z)$ requires an additional computation of the gradient $\nabla f$, which can increase the overall costs. To reduce such calculations, we first conduct a linesearch test using the condition:
\be \label{eq:num-pmer} \psi(\prox{z_k+s_k(\alpha)})< H(\tau_{k-1},z_k). \ee
If \eqref{eq:num-pmer} fails, then it follows $H(\tau_k, z_k+s_k(\alpha))\geq \psi(\prox{z_k+s_k(\alpha)}) \geq H(\tau_{k-1},z_k) \ge H(\tau_{k},z_k)$, i.e., the linesearch condition \eqref{eq:armijo} can not hold. Moreover, if \eqref{eq:num-pmer} holds for some $\bar\alpha$, we then continue the original linesearch strategy \eqref{eq:armijo} using $\bar\alpha$ as the initial step size. This approach effectively reduces the frequency of computing $\Fnor{z_k+s_k}$. In \eqref{eq:cur} and \ref{C2}, we set $\eta=10^{-8}$, $\nu=10^{-3}$, $a_k := c k^{p}\ln^{2p}(k)$ and $b_k := c k^{q}\ln^{2q}(k)$ with $q=p=0.2$ and $c=10^{-3}$. We set $\bar L =1$, $\tau_{-1}=10^{-3}$ and $\gamma=0.9$ in \eqref{eq:Lk} (for computing $L_k$ and $\tau_k$) and $\sigma=10^{-4}$ in the linesearch condition \eqref{eq:armijo}. 

We refer to the CG method used within our linesearch framework \cite{dembo1983truncated} and within the trust region algorithm \cite[Algorithm 7.2]{NocWri06} as CG-LS and CG-TR, respectively. CG-LS differs from CG-TR primarily by the absence of trust region constraints. CG-LS also returns the iterate from the previous inner iteration when nonnegative curvature is detected. In early iterations, algorithms employing CG-LS often experience higher computational costs due to ``over-solving'' than those utilizing CG-TR, as the latter can
mitigate over-solving through the trust region radius. To enhance performance, we set the tolerance $\epsilon_k=\min\{\chi(z_k)^{1.4},0.1\}$ in LSSSN-H, which is more permissive than the tolerance $\min\{ \chi(z_k)^{2.5},0.01\}$ used in LSSSN, TRSSN, and TRSSN-H. The maximum number of CG iterations is capped at $10$ in LSSSN and TRSSN and $100$ in TRSSN-H. To further reduce over-solving, LSSSN-H also limits the maximum number of CG iterations to $10$ when $\chi(z_k)>10^{-4}$ and it is set to $100$ otherwise.  We stop the algorithms when $\|x_k -\text{prox}_{\varphi}(x_k - \nabla f(x_k))\| < 10^{-8}$.

\subsection{Sparse Logistic Regression}
\label{ssec:logistic}
First, we consider a sparse logistic regression problem of the form:
\begin{equation}
\label{eq:logreg-prob} 
\min_{x}~\psi(x)=f(x)+\varphi(x),\quad f(x):=\frac{1}{N}{\sum}_{i=1}^{N}f_i(x),\quad \varphi(x):=\mu\|x\|_1,
\end{equation}
where $f_i(x) :=\log(1+\exp(-b_i\cdot\langle a_i,x \rangle))$ denotes the logistic loss function and the data pairs $(a_i,b_i)\in\mathbb{R}^n\times\{-1,1\}$ are given. The Lipschitz constant of $\nabla f$ is given by $L={\|A\|_2^2}/(4N)$, where $A=(a_1,\dots,a_N)^\top\in\mathbb{R}^{N\times n}$. The proximity operator $\mathrm{prox}_{\mu\lambda\|\cdot\|_1}$ and its generalized derivatives are given by
\begin{equation*}
\begin{aligned}
 &\mathrm{prox}_{\mu\lambda\|\cdot\|_1}(z) = \mathrm{sgn}(z) \odot \max\left\{0,\vert z\vert-{\mu}{\lambda}\right\},\\
 &D(z) = \mathrm{diag}(d(z)) \quad \text{and} \quad d_{i}(z) = 
\begin{cases}
0  &\text{if~~}\vert z_i\vert\leq {\mu}{\lambda}, \\
1   & \text{otherwise}. 
\end{cases}
\end{aligned}
\end{equation*}
We compare our method (LSSSN, LSSSN-H) with its trust region counterparts (TRSSN, TRSSN-H), PNOPT \cite{lee2014proximal} and FISTA \cite{beck2009fast}.  PNOPT is a proximal Newton method, which uses a quasi-Newton approximation of $\nabla^2 f$ and does not need to compute the Lipschitz constant of $\nabla f$. FISTA is a first-order method with Nesterov-type acceleration. We use the known the Lipschitz constant as step size. For PNOPT, we use the source code released by the authors\footnote{\url{https://web.stanford.edu/group/SOL/software/pnopt/}} and all parameters are set to the default values, cf.\ \cite[Section 7.2]{ouyang2025trust}.

\begin{table}[t]
  \centering
   \begin{tabular}{lccclccc}  
 \cmidrule[1pt](){1-8}  
  Dataset & $N$ & $n$ & timeL(s) & Dataset & $N$ & $n$  & timeL(s) \\[0.5ex]  
  \cmidrule[.5pt](){1-8}\\[-1.5ex]
   \texttt{BIO} & 145\,751 & 75  & 0.13&\texttt{news20} & 19\,996  & 1\,355\,191 &0.22\\[0.5ex] 
   \texttt{CINA} & 16\,033  & 132 & 0.03& \texttt{rcv1} & 20\,242 & 47\,236  & 0.03 \\[0.5ex] 
   \texttt{covtype} & 581\,012 & 54 & 0.14& \texttt{real-sim} & 72\,309 & 20\,958 & 0.07    \\[0.5ex]    
\texttt{epsilon} & 400\,000 & 2\,000 &  106.7 & \texttt{gisette} & 6\,000 & 5\,000 & 0.07 \\[0.5ex]
\texttt{cifar10} & 50\,000 & 3\,072  &13.8    \\[0.5ex]
   \cmidrule[1pt](){1-8}\\[-1.5ex]
  \end{tabular}
  \caption{Information of the different datasets, ``timeL'' represents the time for computing $L$.}
  \label{table2}
  \end{table}

We test nine different datasets (\texttt{BIO}\footnote{\url{https://osmot.cs.cornell.edu/kddcup/datasets.html}}, \texttt{CINA}\footnote{\label{data}\url{http://www.causality.inf.ethz.ch/data}}, \texttt{covtype}\footnote{\label{libsvmnote}\url{https://www.csie.ntu.edu.tw/~cjlin/libsvmtools/datasets/}}, \texttt{gisette}\footref{libsvmnote}, \texttt{rcv1}\footref{libsvmnote}, \texttt{real-sim}\footref{libsvmnote}, \texttt{epsilon}\footref{libsvmnote}, \texttt{news20}\footref{libsvmnote}, \texttt{cifar10}\footref{libsvmnote}). More information about these datasets can be found in Table~\ref{table2}. We set $x^0 = 0$, $\lambda=10$  in all experiments and choose $\mu=0.01$ in \texttt{cifar10} and $\mu=0.002$ for all other datasets. 
The results of our comparison are shown in Figures~\ref{fig2a} and~\ref{fig2b}. We plot the relative error $\texttt{rel$\_$err} = (\psi(x)-\psi^*) /\max\{1,\psi^*\}$ with respect to the cpu-time in Figure~\ref{fig2a}. Here, $\psi^*$ is the lowest objective function value encountered by all algorithms during the experiment. The figure shows that LSSSN performs better than TRSSN in all datasets except \texttt{gisette}. LSSSN-H performs better than TRSSN-H in all datasets except \texttt{gisette} and \texttt{BIO}. Furthermore, PNOPT can quickly recover solutions with moderate accuracy, but is surpassed by the normal map-based algorithms in later iterations. FISTA, as a first-order method, generally performs worse than the other higher-order methods in this problem. For the large-scale datasets \texttt{epsilon} and \texttt{cifar10}, it is notable that the cpu-time gap between LSSSN (LSSSN-H) and TRSSN (LSSSN-H) is dominated by the time spent to compute $L$ in Table~\ref{table2}. 
Figure~\ref{fig2b} further shows that LSSSN (LSSSN-H) performs better than TRSSN (LSSSN-H) in terms of the number of iterations (except on \texttt{gisette}) and local superlinear convergence of LSSSN and TRSSN can be observed. These tests illustrate that the proposed linesearch-type normal map-based semismooth Newton method and the trust region counterpart (from \cite{ouyang2025trust}) have comparable performance. Moreover, adaptive estimation rather than explicit computation of $L$ improves the efficiency in large-scale problems.

\begin{figure}[t]
    \centering
    \subfigure[\texttt{CINA}.]{
    \includegraphics[width=3.6cm]{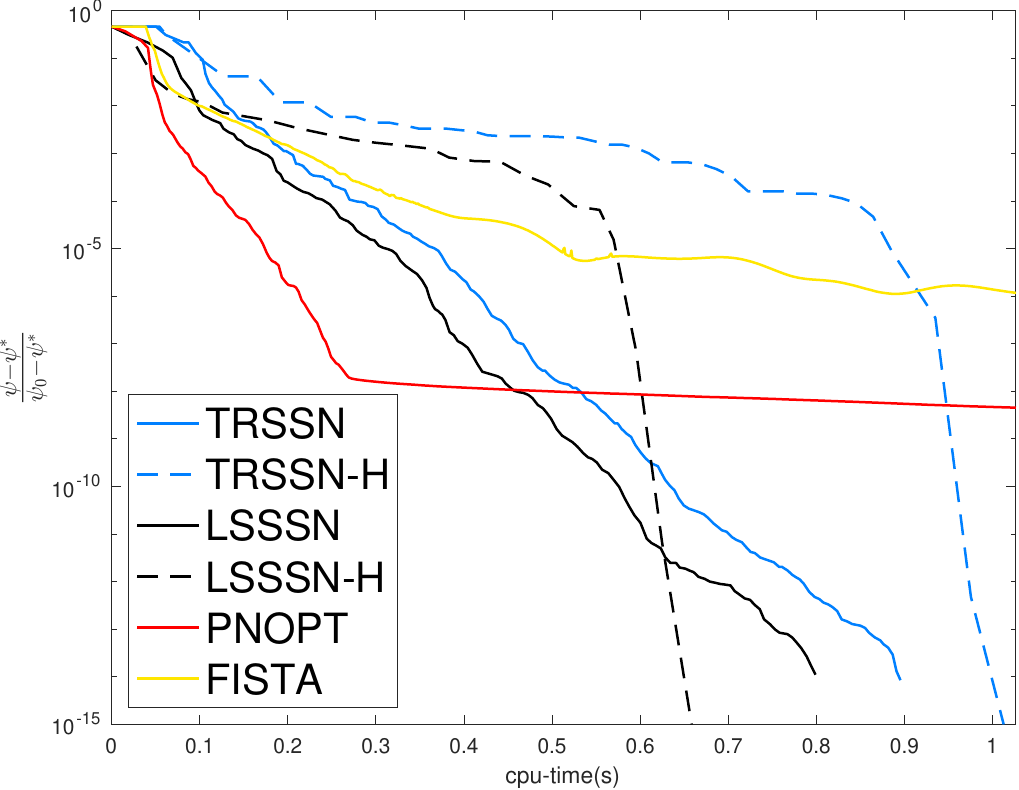} 
    }
    \subfigure[\texttt{BIO}.]{
    \includegraphics[width=3.6cm]{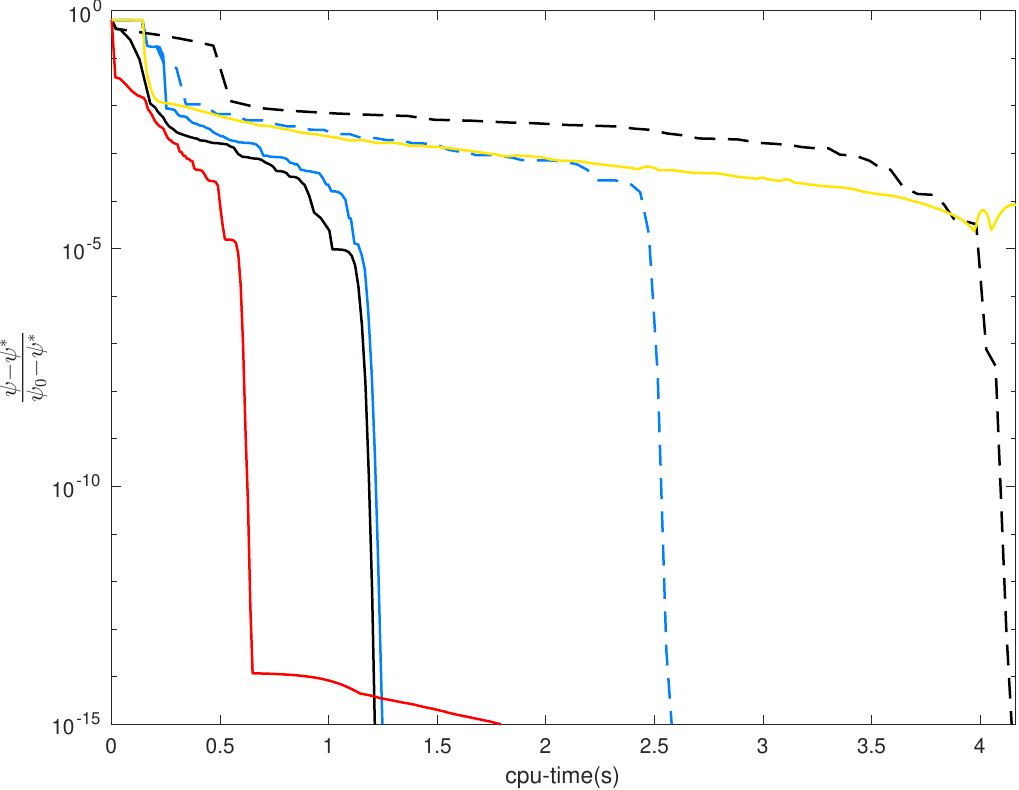}
    }
    \subfigure[\texttt{covtype}.]{
    \includegraphics[width=3.6cm]{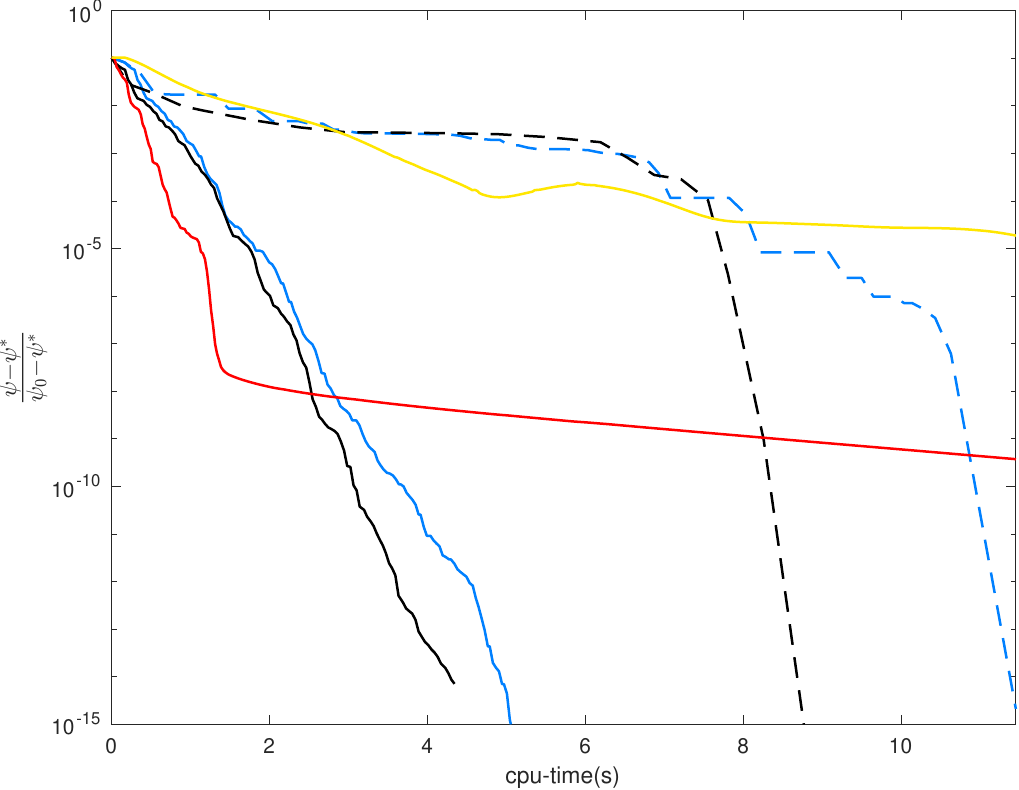}
    }
    \\[-1ex]
    \subfigure[\texttt{epsilon}.]{
    \includegraphics[width=3.6cm]{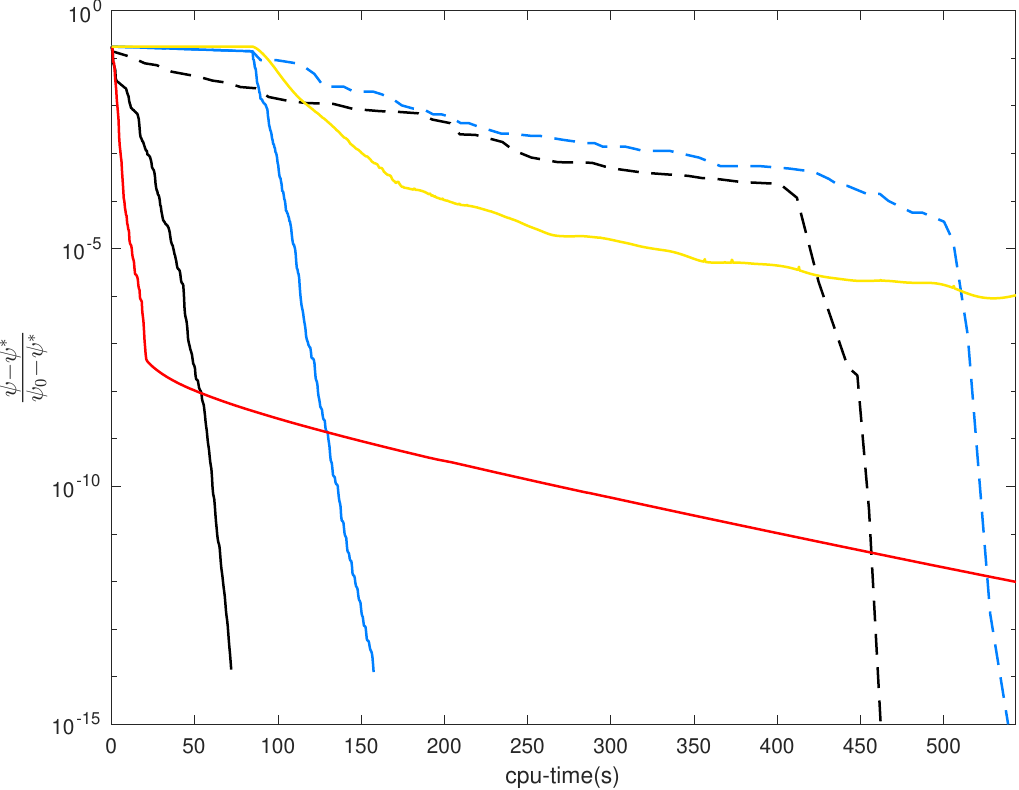}
    }
    \subfigure[\texttt{gisette}.]{
    \includegraphics[width=3.6cm]{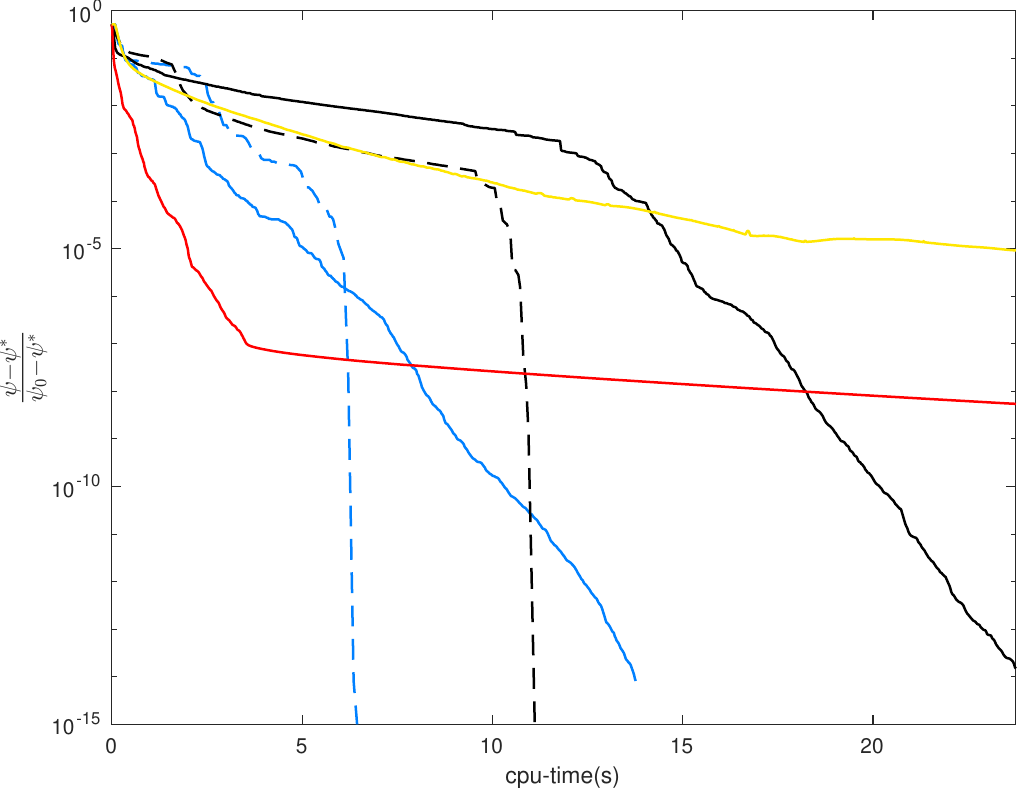}
    }
    \subfigure[\texttt{news20}.]{
    \includegraphics[width=3.6cm]{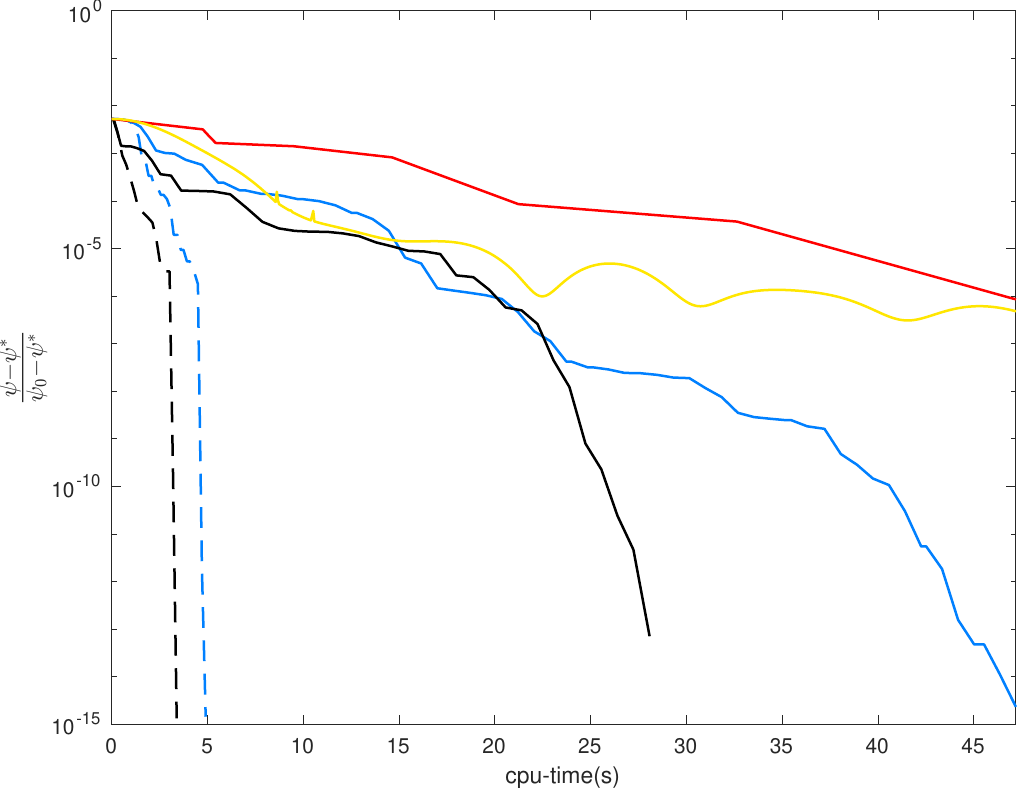} 
    }
    \\[-1ex]
    \subfigure[\texttt{rcv1}.]{
    \includegraphics[width=3.6cm]{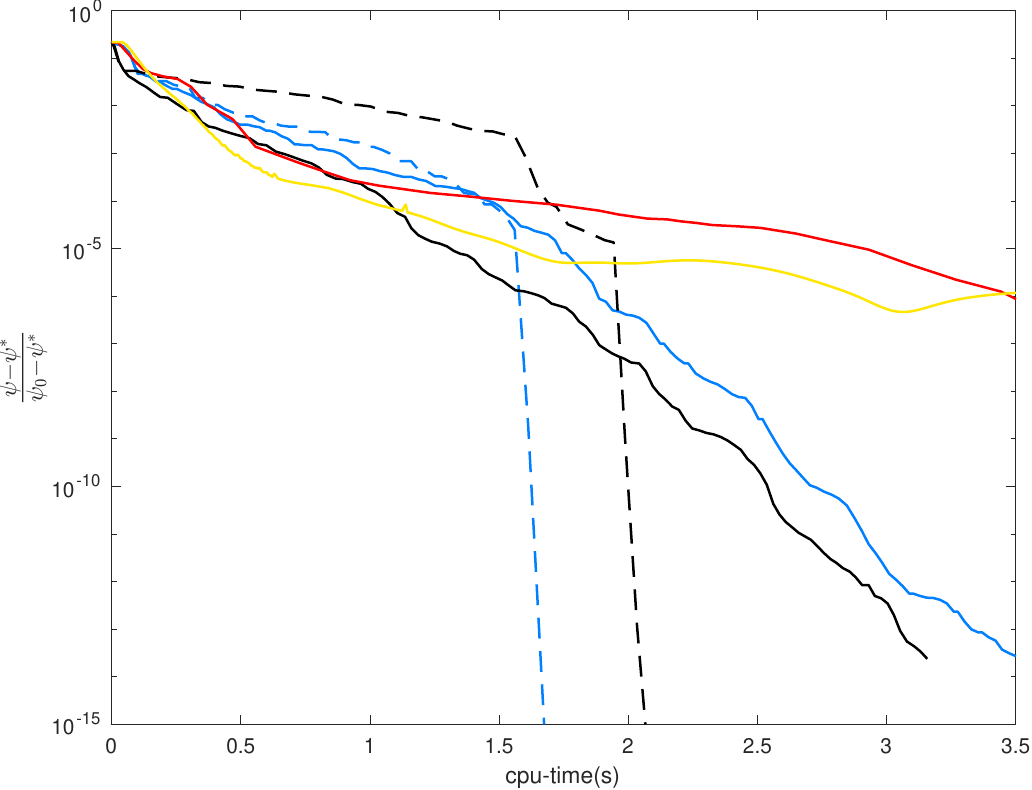}
    }
    \subfigure[\texttt{real-sim}.]{
    \includegraphics[width=3.6cm]{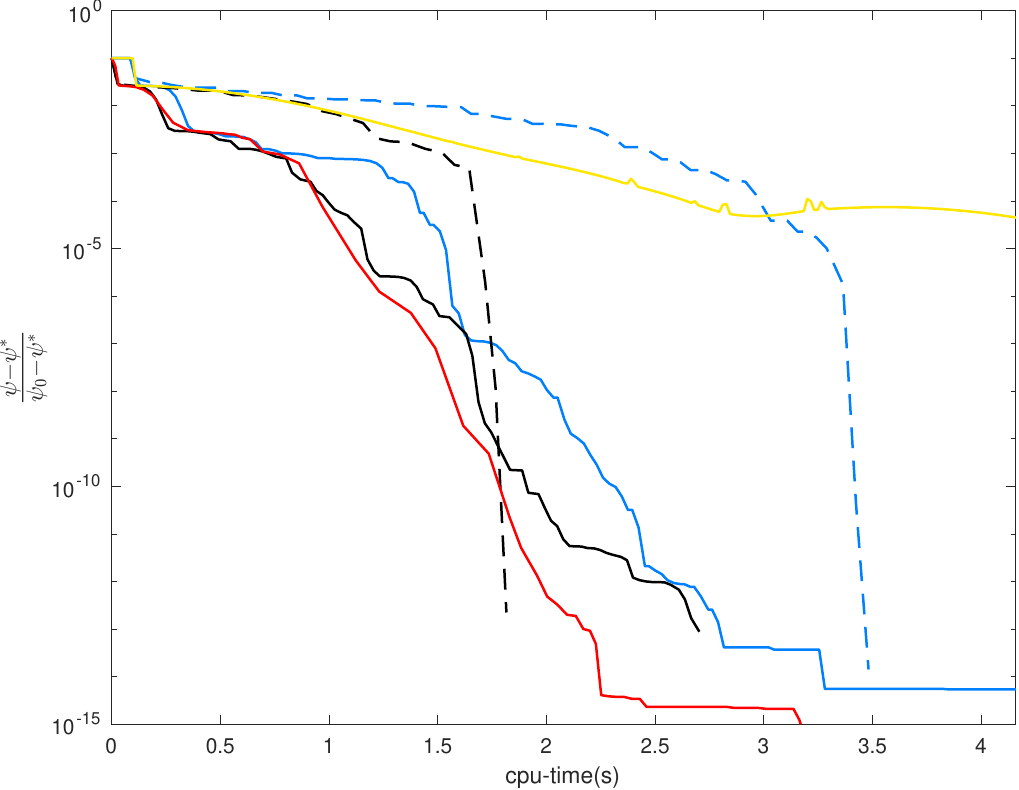}
    }
    \subfigure[\texttt{cifar10}.]{
    \includegraphics[width=3.6cm]{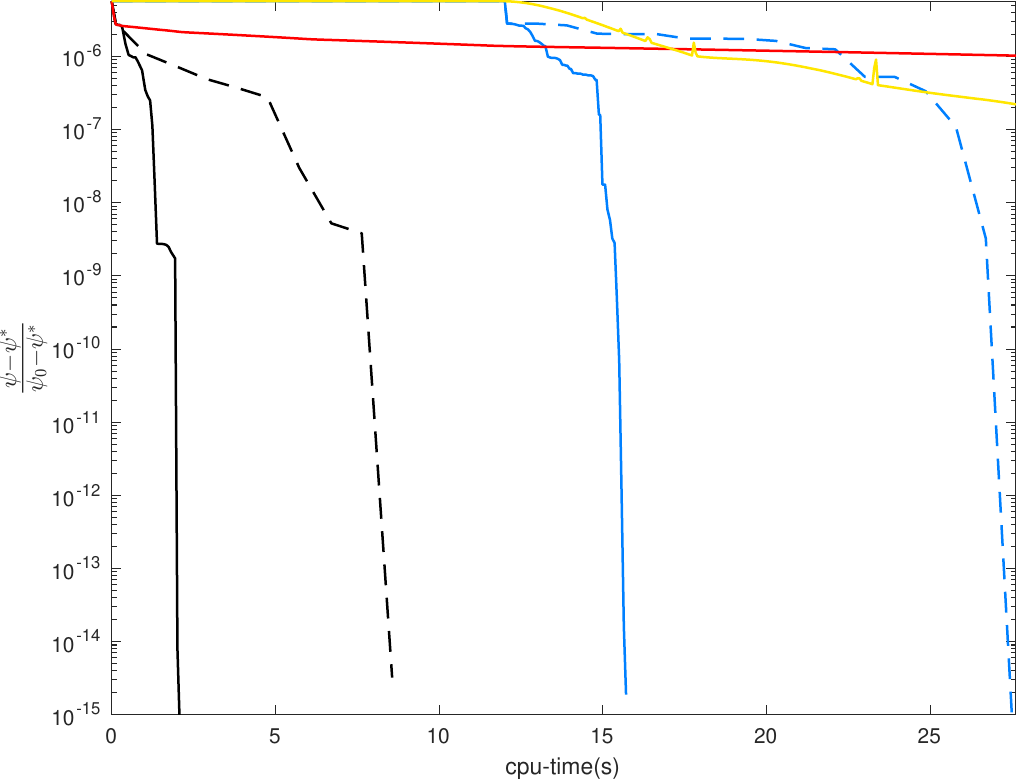}
    }
    \caption{Change of the relative error $\texttt{rel$\_$err}$ with respect to the cpu-time for solving the $\ell_1$-logistic
    regression problem \eqref{eq:logreg-prob}.}
    \label{fig2a}
    \end{figure} 

\begin{figure}[t]
    \centering
    \subfigure[\texttt{CINA}.]{
    \includegraphics[width=3.6cm]{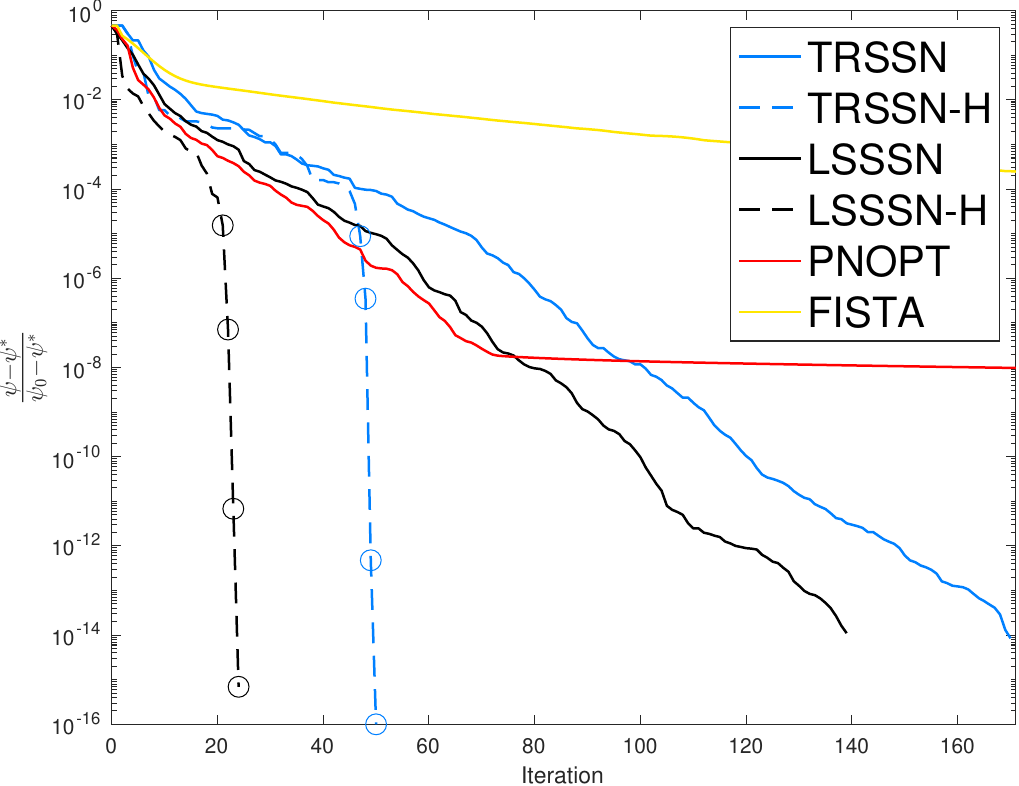} 
    }
    \subfigure[\texttt{BIO}.]{
    \includegraphics[width=3.6cm]{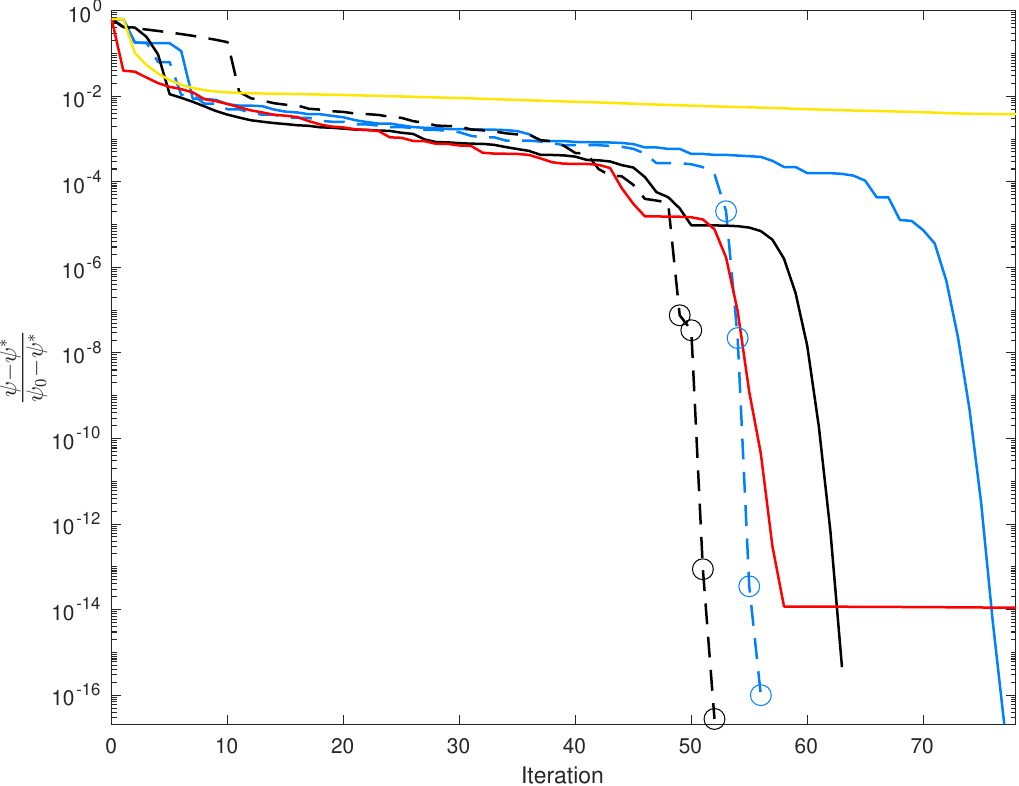}
    }
    \subfigure[\texttt{covtype}.]{
    \includegraphics[width=3.6cm]{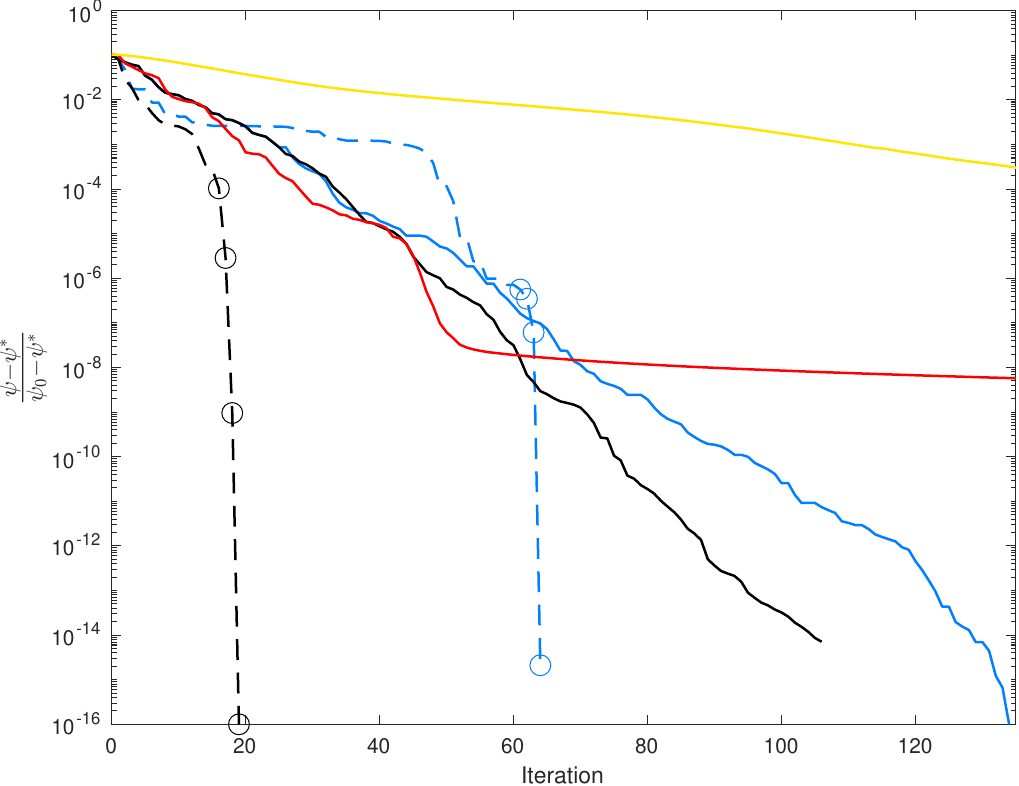}
    }
    \\[-1ex]
    \subfigure[\texttt{epsilon}.]{
    \includegraphics[width=3.6cm]{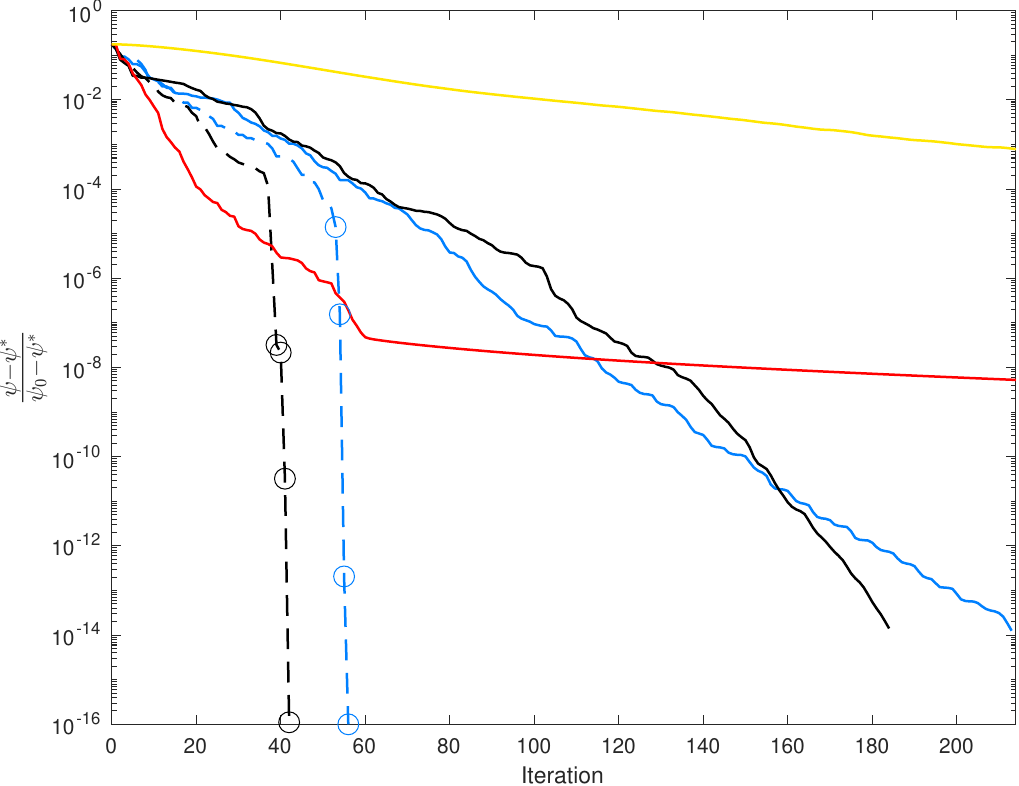}
    }
    \subfigure[\texttt{gisette}.]{
    \includegraphics[width=3.6cm]{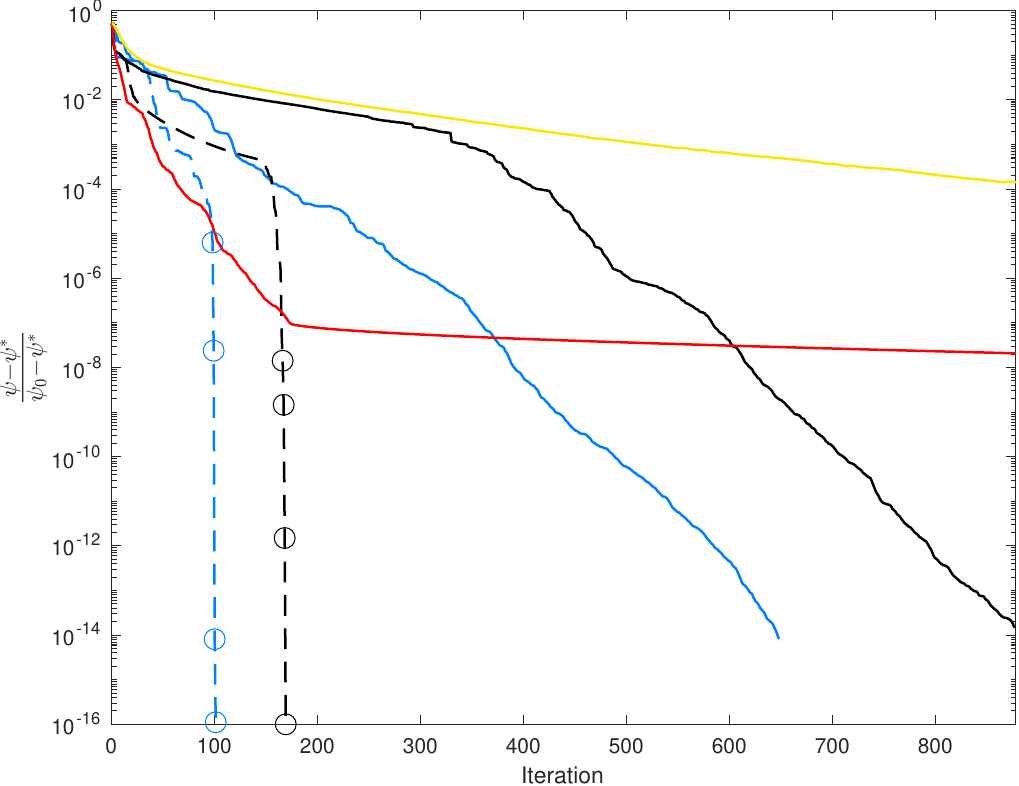}
    }
    \subfigure[\texttt{news20}.]{
    \includegraphics[width=3.6cm]{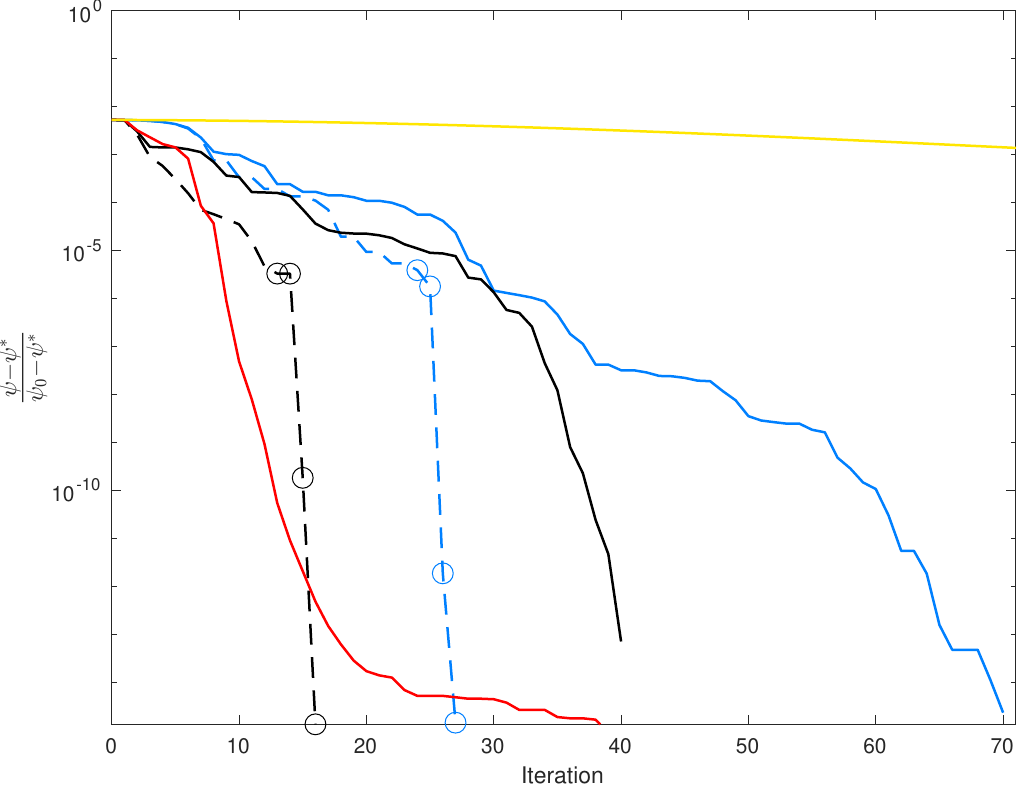} 
    }
    \\[-1ex]
    \subfigure[\texttt{rcv1}.]{
    \includegraphics[width=3.6cm]{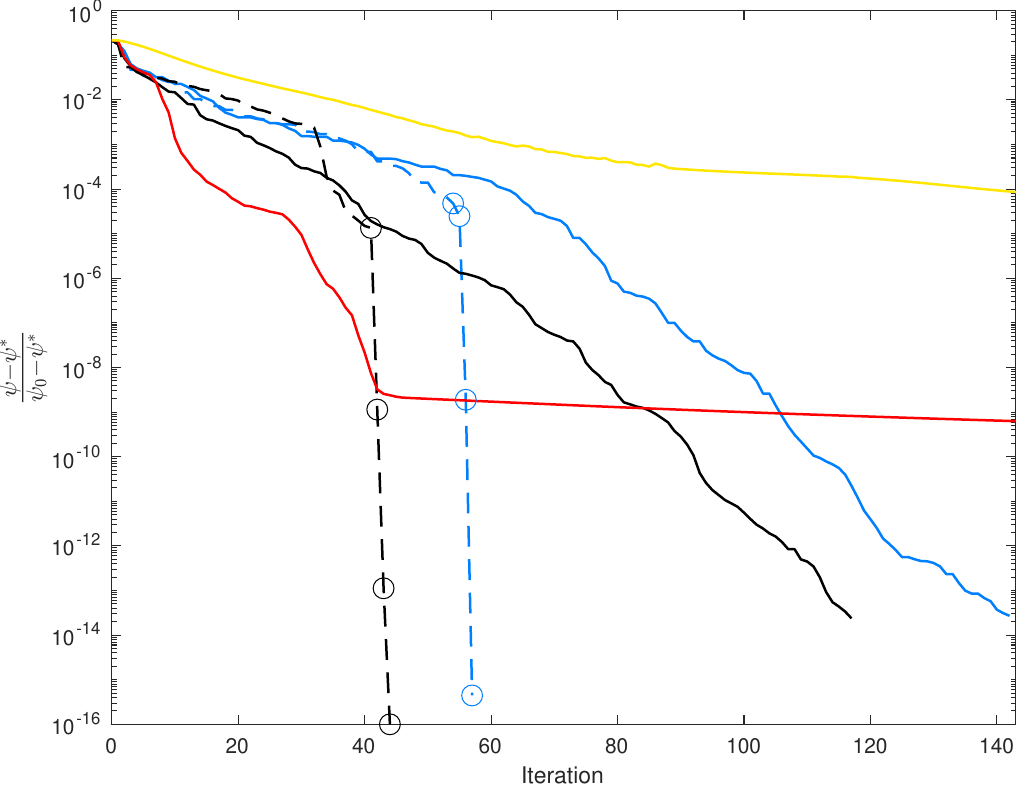}
    }
    \subfigure[\texttt{real-sim}.]{
    \includegraphics[width=3.6cm]{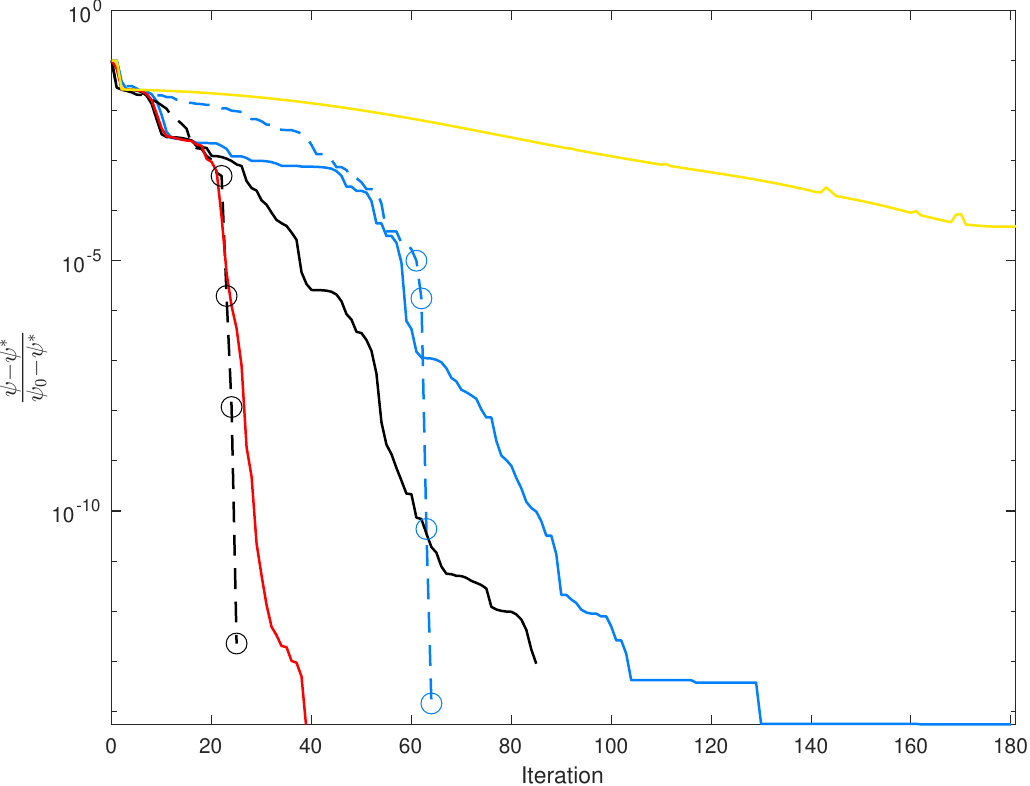}
    }
    \subfigure[\texttt{cifar10}.]{
    \includegraphics[width=3.6cm]{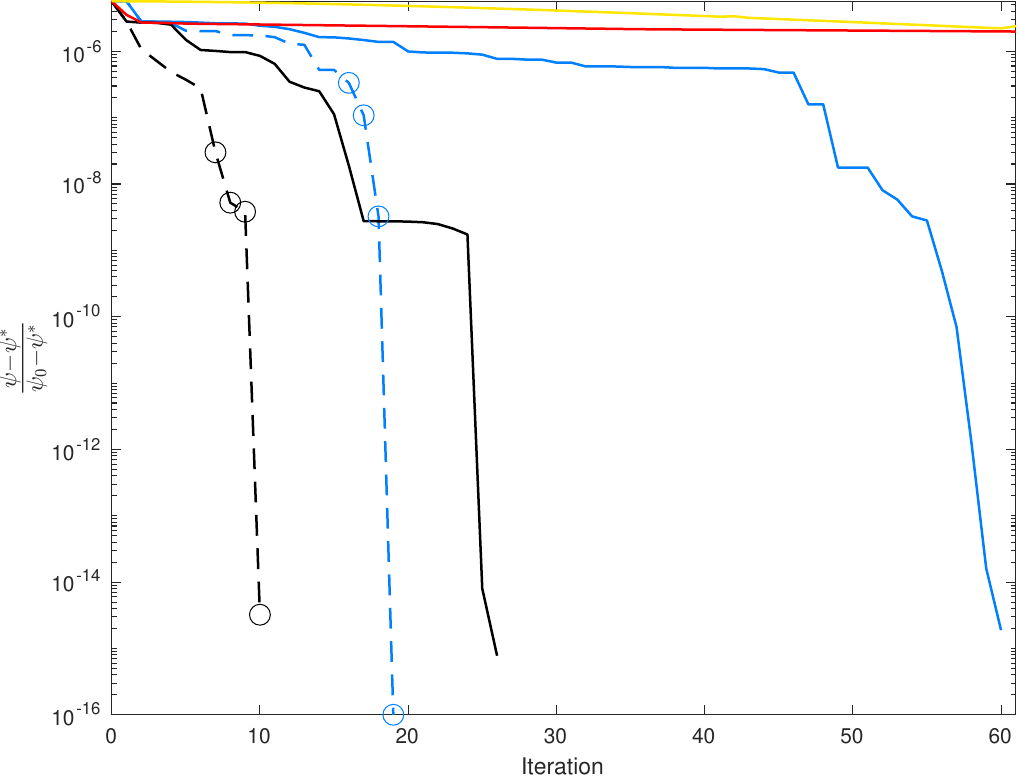}
    }
    \caption{Change of the relative error $\texttt{rel$\_$err}$ with respect to the number of iterations for solving the $\ell_1$-logistic
    regression problem \eqref{eq:logreg-prob}. We mark the last four iterations of TRSSN-H and LSSSN-H with $\circ$ to illustrate local superlinear convergence.} 
    \label{fig2b}
    \end{figure} 
    
 In Figure~\ref{fig1}, we discuss the performance of LSSSN and LSSSN-H on the datasets \texttt{CINA} and \texttt{rcv1} for different choices of $\epsilon_k$. Figure~\ref{fig1} illustrates that LSSSN is less sensitive than LSSSN-H in terms of the choice of inexactness parameters, especially regarding CPU time.  This shows the advantage of using the quasi-Newton approximation.  
 \begin{figure}[htbp]
    \centering
    \subfigure[\texttt{CINA} -- iterations.]{
    \includegraphics[width=5.5cm]{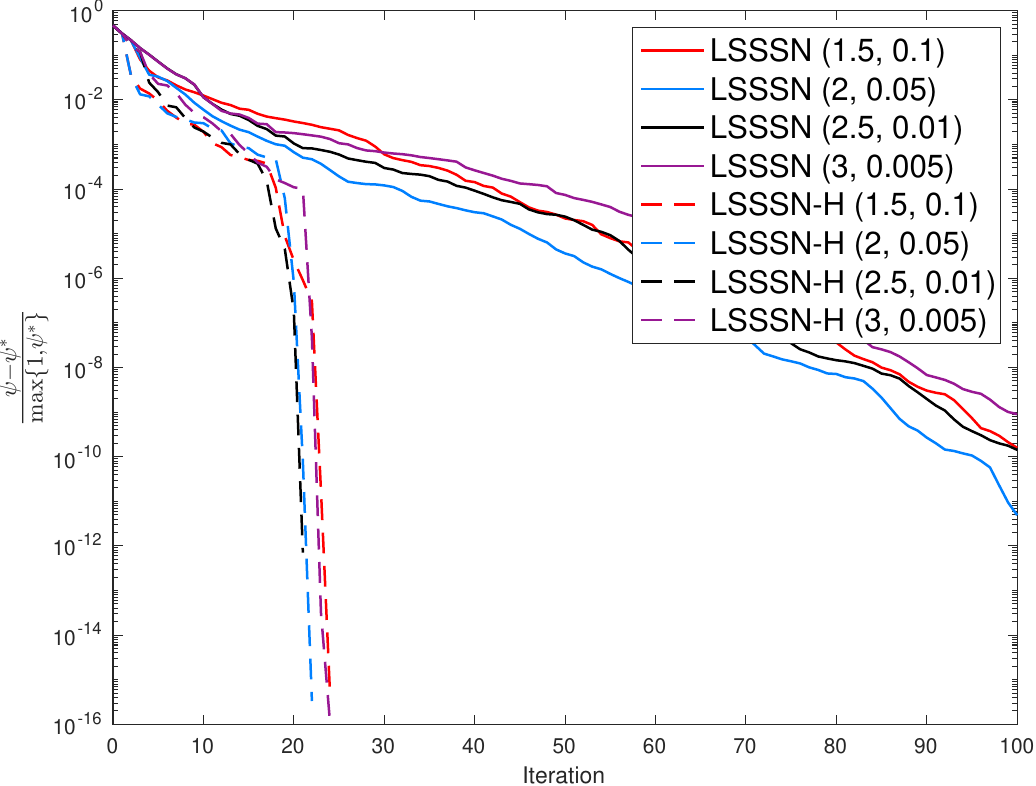}
    }
    \;
    \subfigure[\texttt{CINA} -- cpu-time.]{
    \includegraphics[width=5.5cm]{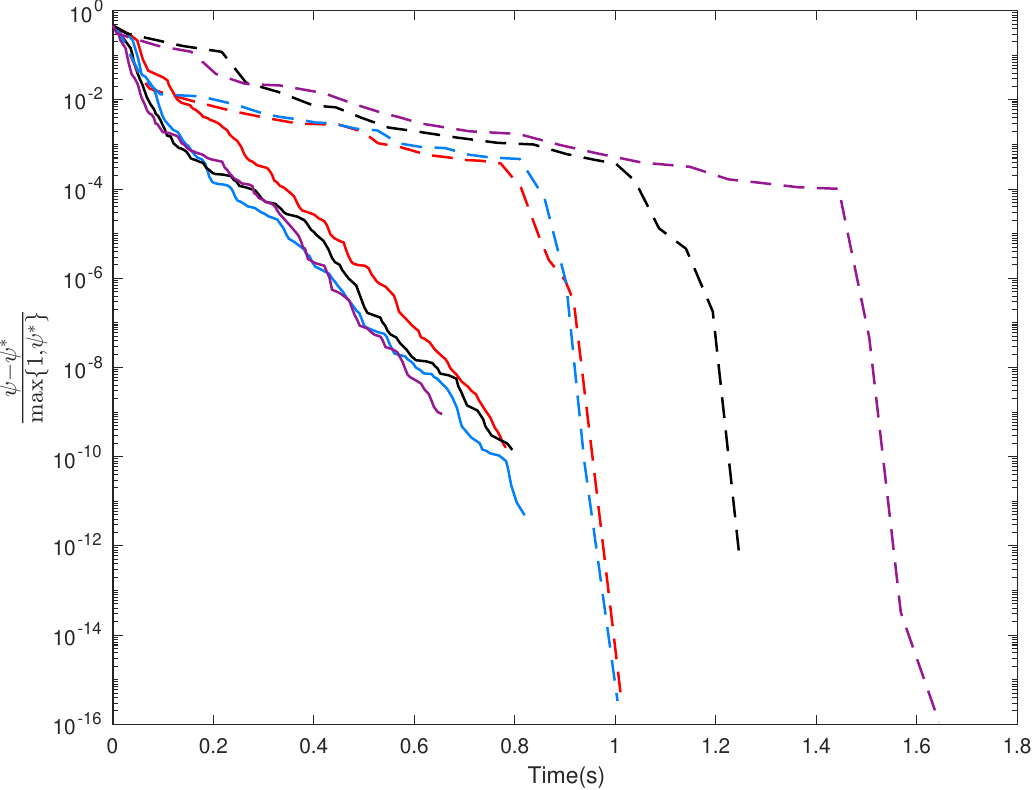}
    }
    \\[-1ex]
    \subfigure[\texttt{rcv1} -- iterations.]{
    \includegraphics[width=5.5cm]{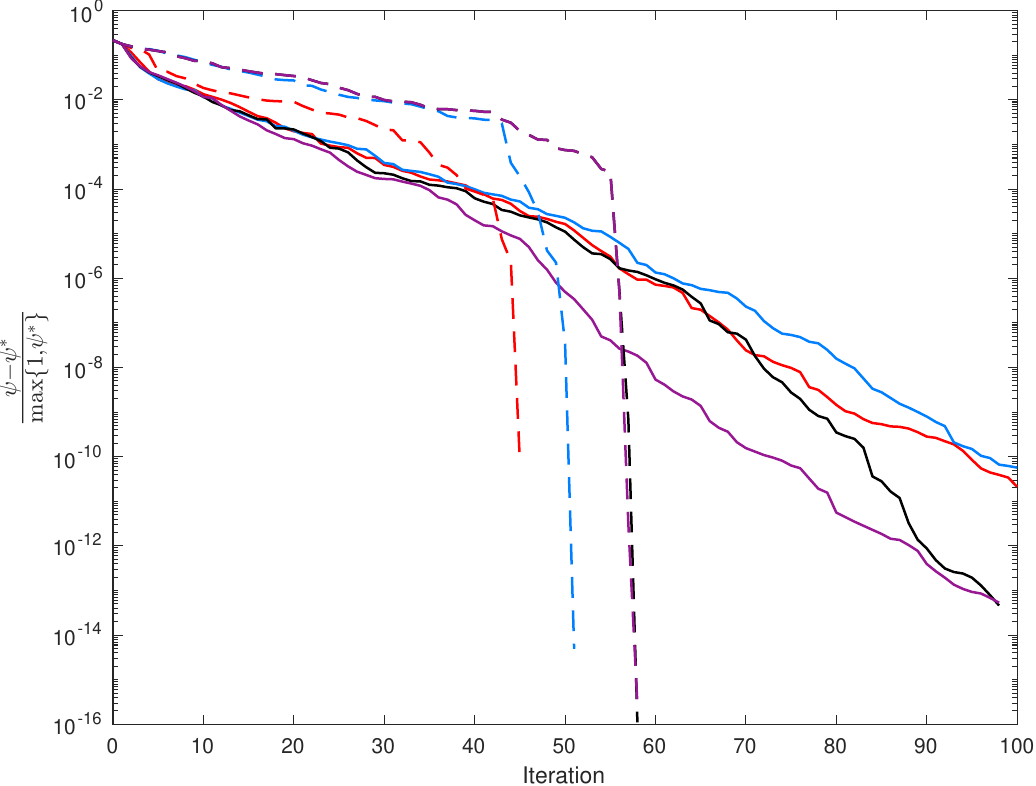}
    }
    \;
    \subfigure[\texttt{rcv1} -- cpu-time.]{
    \includegraphics[width=5.5cm]{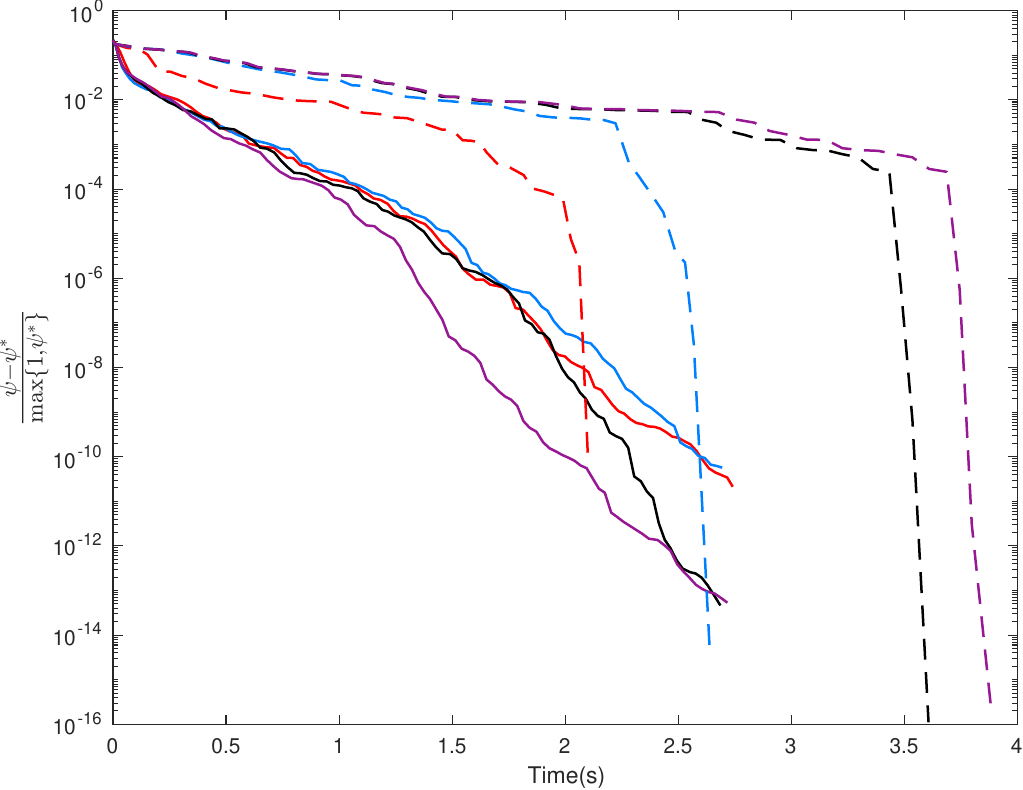}
    }
     
    \caption{Comparison of LSSSN and LSSSN-H on \texttt{CINA} and \texttt{rcv1} for different choices of $\epsilon_k = \min\{\chi(z_k)^{a},b\}$, where $(a,b) \in\{(1.5,0.1),(2,0.05),(2.5,0.01),(3,0.001)\}$.}
    \label{fig1}
    \end{figure}

\subsection{Linear Diffusion Based Image Compression}
\label{ssec:lineardiffusion}
Next, we test LSSSN on a linear diffusion based image compression problem \cite{galic2008image,schmaltz2009beating},  $\min_{c}~\psi(c):=f(c)+\varphi(c)$, where
\begin{equation}\label{eq:diff}
f(c):=\frac{1}{2}\|A(c)^{-1}\diag(c)u-u\|^2,\quad  \varphi(c):=\mu \|c\|_1+\iota_{[0,1]^n}(c). 
\end{equation}
Here, $u\in\mathbb{R}^n$ denotes the (stacked) ground truth image and $c\in\mathbb{R}^n$ is the inpainting or compression mask. This problem aims to find the optimal mask $c$ with the smallest density so that the image $u$ can still be reconstructed as $x=A(c)^{-1}\diag(c)u$ with reasonable quality using this mask. According to~\cite[Theorem 1]{yu2013decomposing}, it holds that
\begin{equation*}
\begin{aligned}
&\prox{z}=\mathcal P_{[0,1]^n}\circ\mathrm{prox}_{\mu\lambda \|\cdot\|_1}(z)=\max\{0,\min\{1,\mathrm{prox}_{\mu\lambda \|\cdot\|_1}(z)\}\},\\
&D(z) = \mathrm{diag}(d(z)) \quad \text{and} \quad d_{i}(z)= 
\begin{cases}
0 &  \text{if } z_i\leq {\mu}{\lambda} \; \text{ or } \; z_i\geq {\mu}{\lambda}+1 , \\
1 & \text{otherwise}.
\end{cases}
\end{aligned}
\end{equation*}
We refer to \cite[Section 7.3]{ouyang2025trust} for further details. We compare LSSSN with TRSSN, \text{iPiano}~\cite{OchCheBroPoc14}, and \text{SpaRSA}~\cite{WriNowFig09}. As the Lipschitz constant of $\nabla f$ can not be computed exactly, TRSSN also employs an adaptive strategy to estimate $L$ via $L_k$ and resets $\lambda=1/L_k$ each iteration. (Similar adaptive rules are also used in \text{iPiano} and \text{SpaRSA}, see \cite[Section 7.3]{ouyang2025trust}). Similar to TRSSN, we also reset $\lambda=1/L_k$ in LSSSN but only when $\|\Fnor{z_k}\| > 10^{-4}$. For all algorithms, we initialize $c_0$ as the vector with all components equal to $1$.
\begin{figure}[thbp]
  \centering
  \subfigure[\texttt{books} -- cpu-time.]{
  \includegraphics[width=5.5cm]{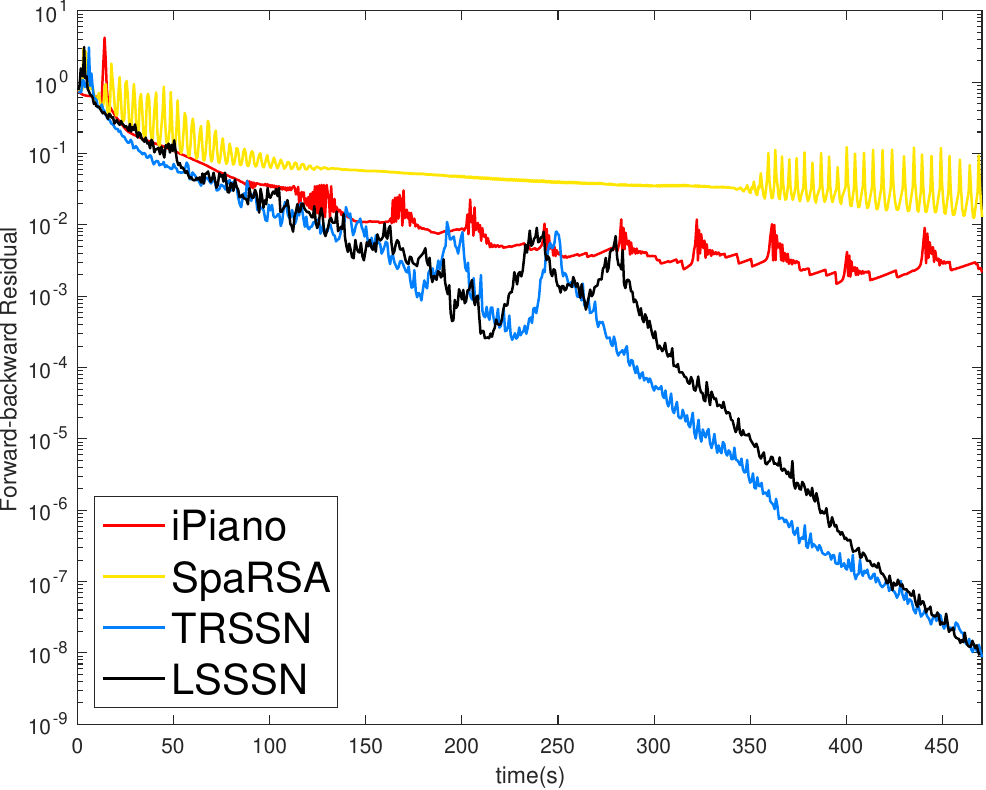}
  }
  \subfigure[\texttt{coffee} -- cpu-time.]{
  \includegraphics[width=5.5cm]{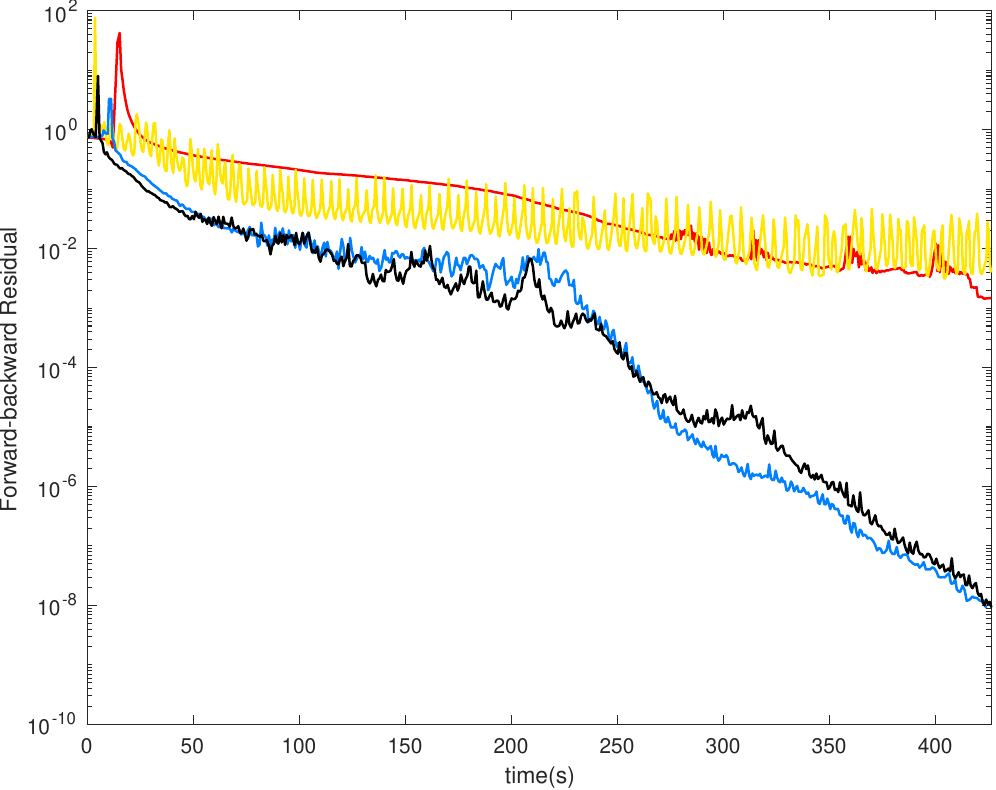}
  }
   \\[-1ex]
  \subfigure[\texttt{mountain} -- cpu-time.]{
  \includegraphics[width=5.5cm]{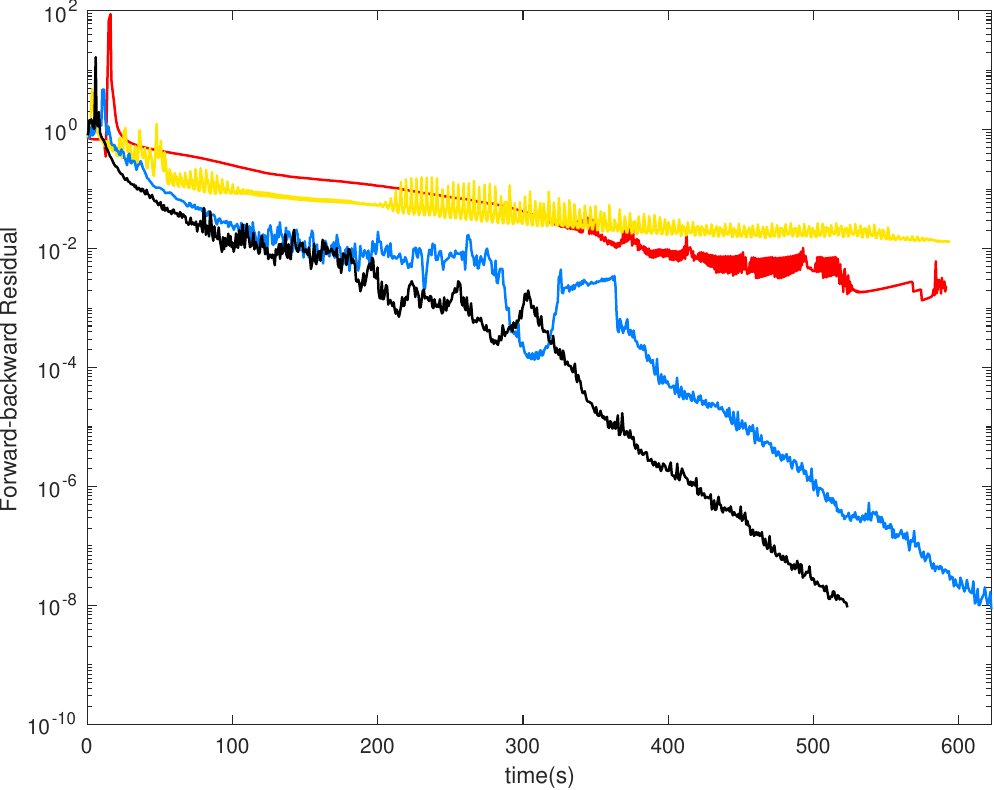}
  }
   \subfigure[\texttt{stones} -- cpu-time.]{
  \includegraphics[width=5.5cm]{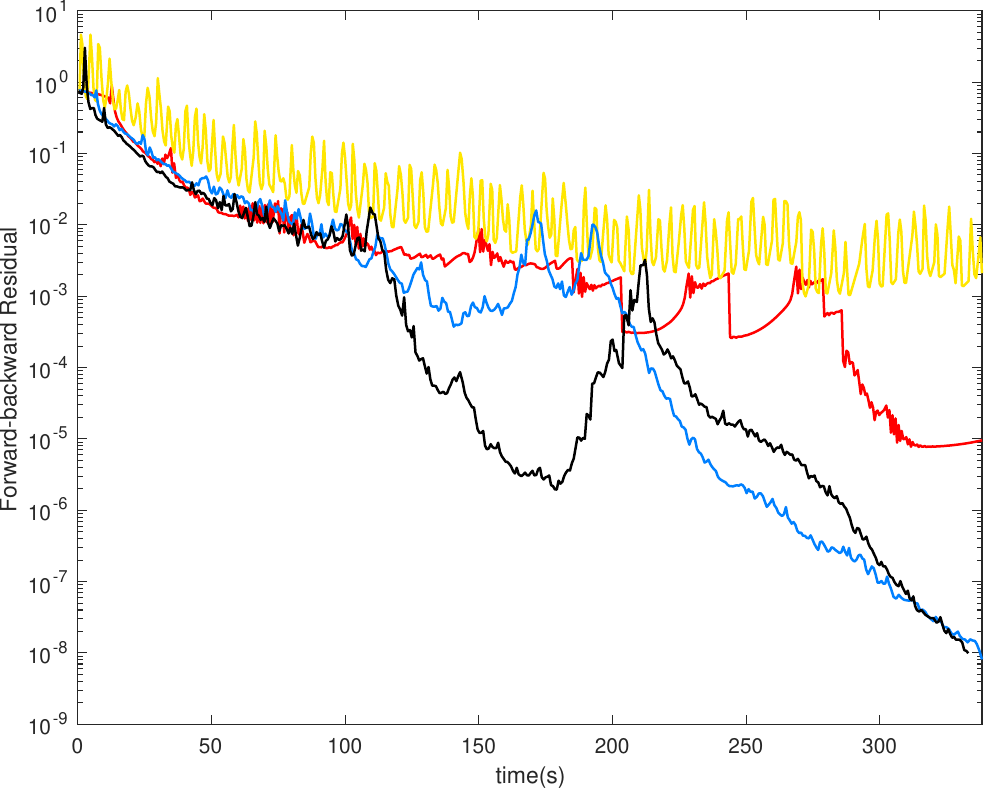}
  }
  \caption{Numerical comparison of iPiano, SpaRSA, TRSSN, and LSSSN on problem \eqref{eq:diff}. Plot of the norm of the natural residual with respect to the cpu-time for different images.}
  \label{fig3-1}
  \end{figure}

\begin{figure}[thbp]
  \centering
  \subfigure[$\mu=0.01$ -- cpu-time.]{
  \includegraphics[width=3.6cm]{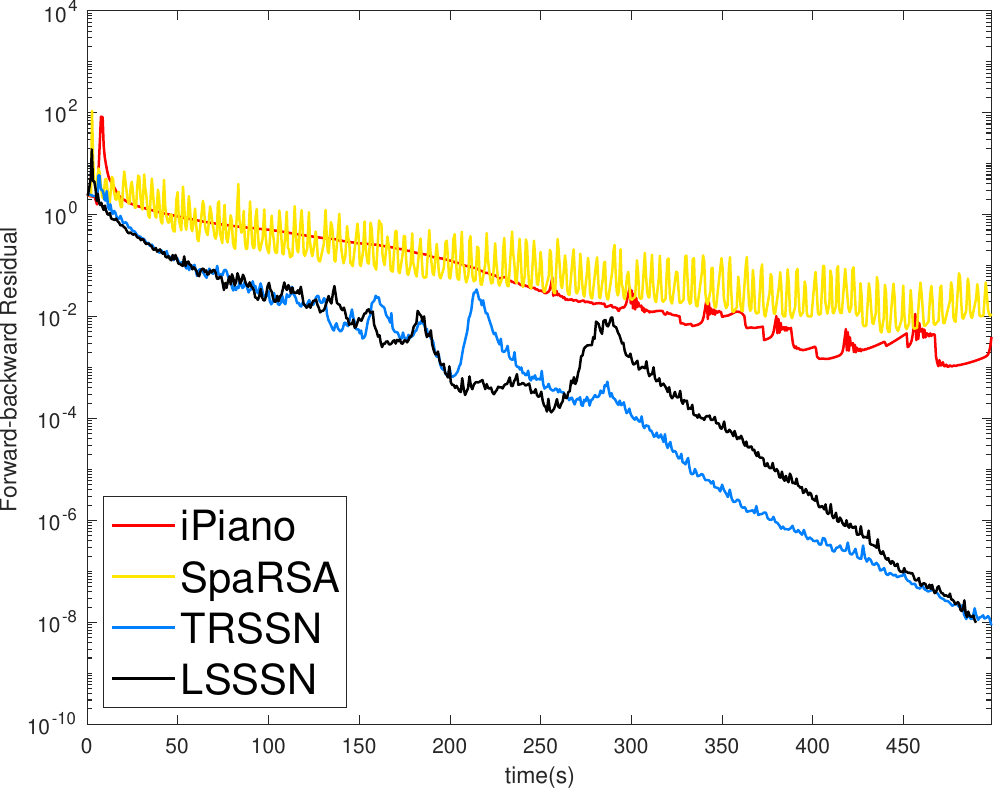}
  }
  \subfigure[$\mu=0.006$ -- cpu-time.]{
  \includegraphics[width=3.6cm]{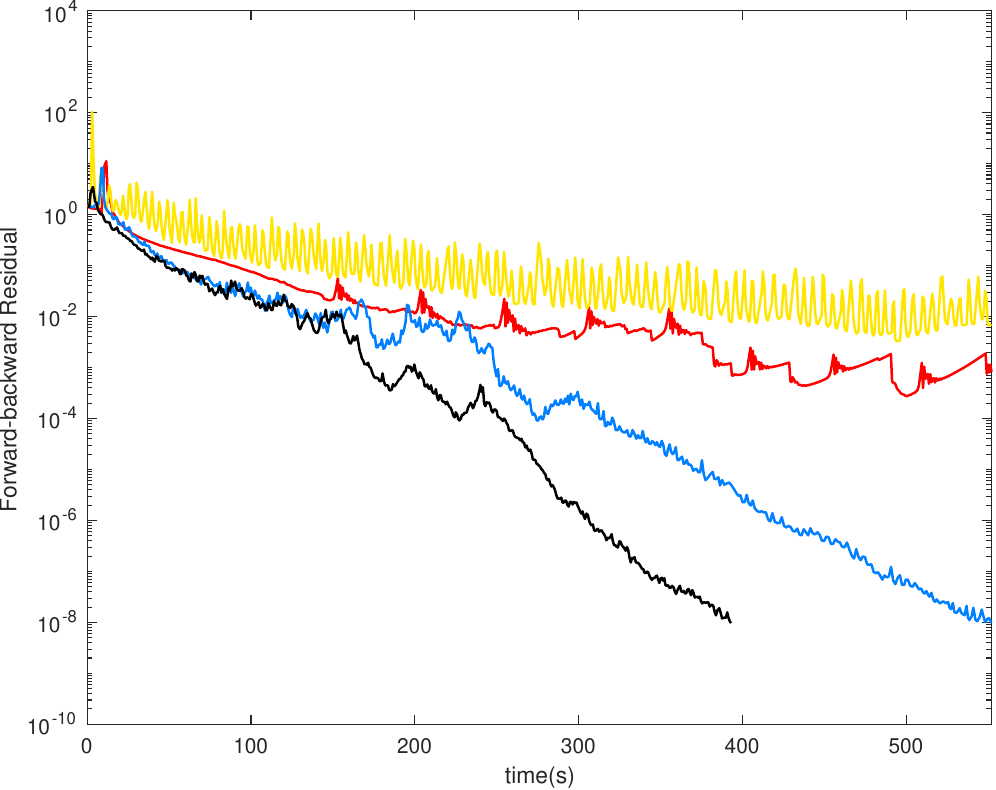}
  }
  \subfigure[$\mu=0.003$ -- cpu-time.]{
  \includegraphics[width=3.6cm]{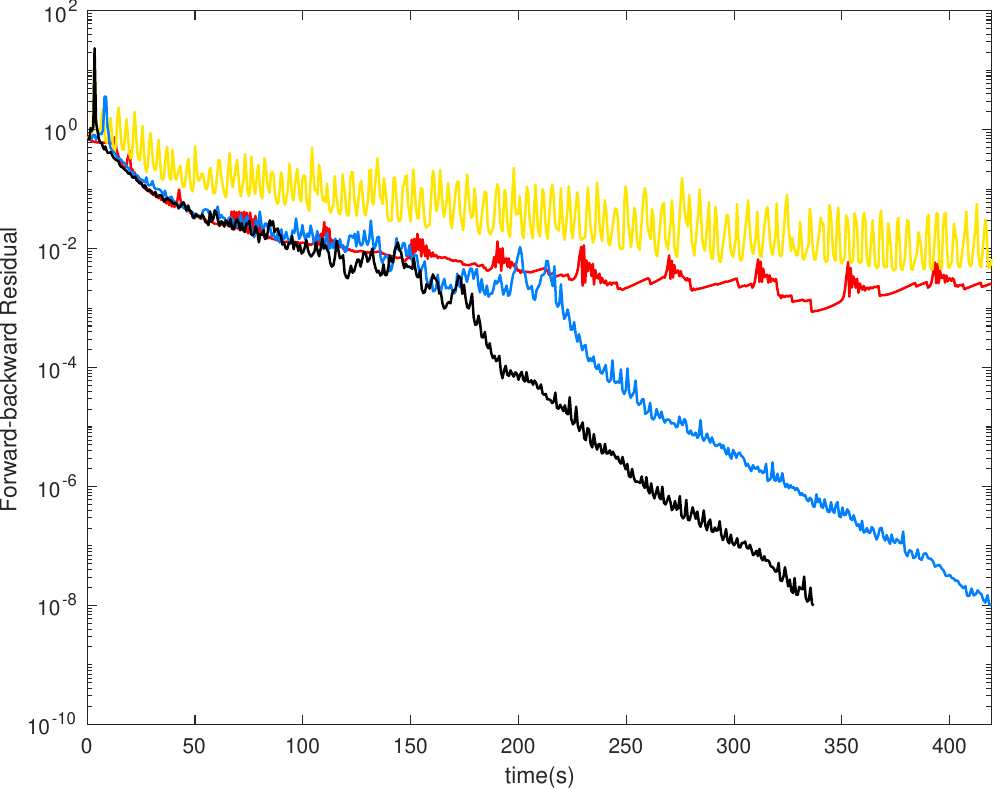}
  }
  \\[-1ex]
   \subfigure[$\mu=0.01$ -- cpu-time.]{
  \includegraphics[width=3.6cm]{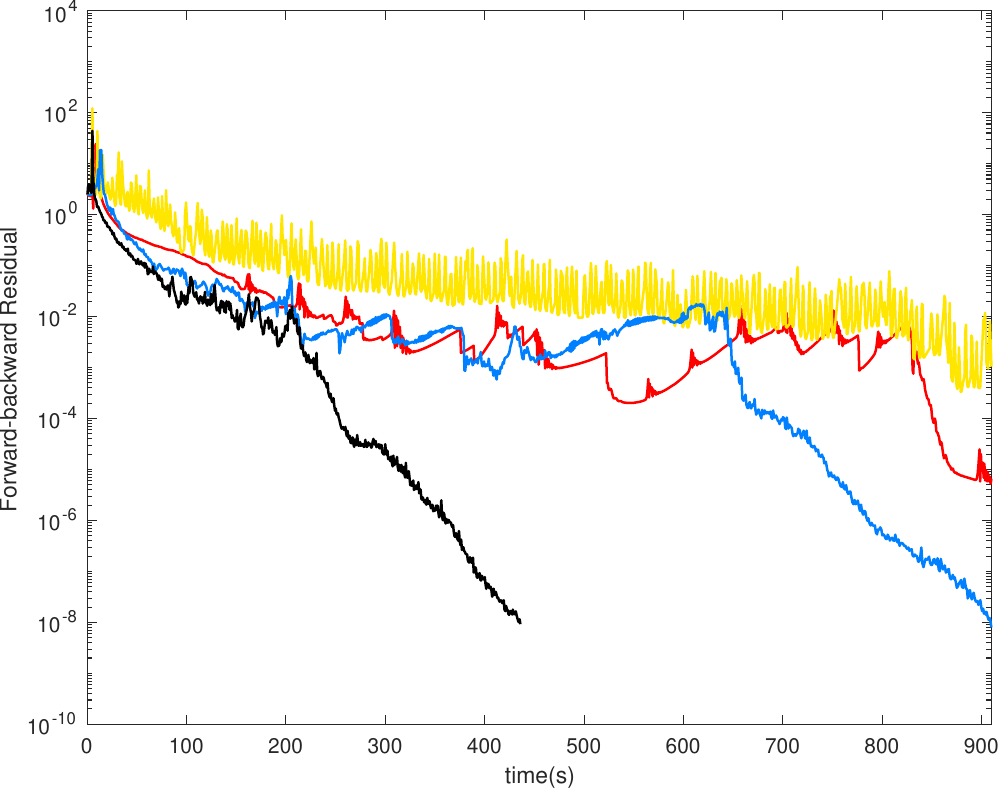}
  }
  \subfigure[$\mu=0.006$ -- cpu-time.]{
  \includegraphics[width=3.6cm]{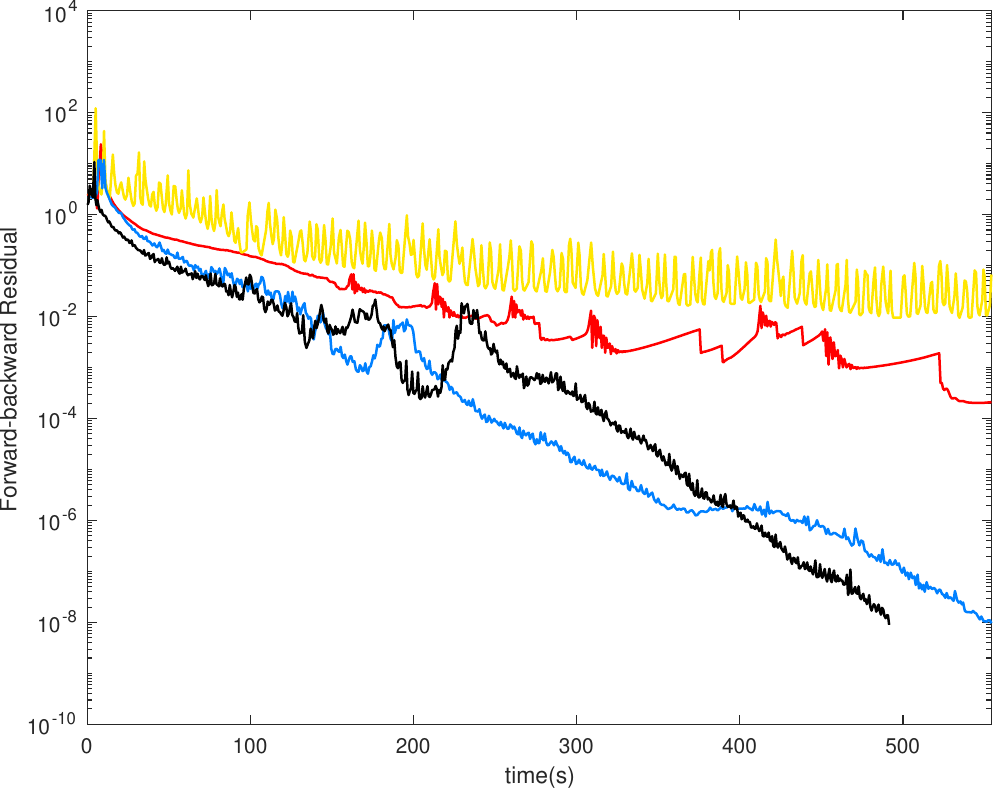}
  }
  \subfigure[$\mu=0.003$-- cpu-time.]{
  \includegraphics[width=3.6cm]{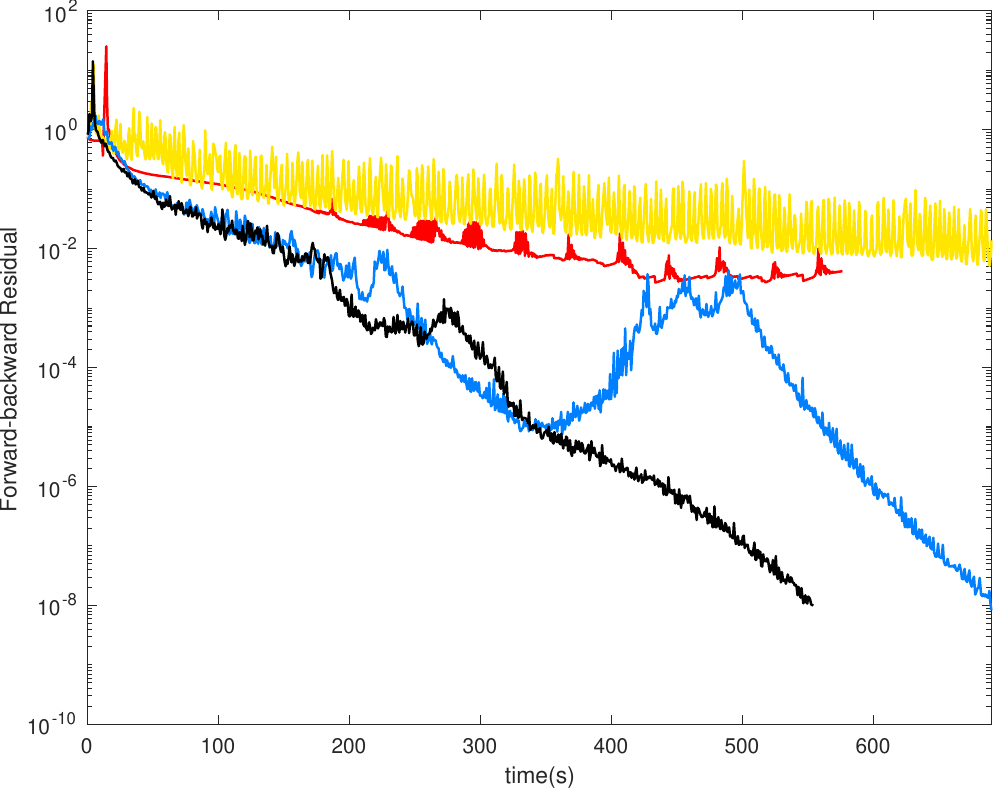}
  }
  \caption{Numerical comparison of iPiano, SpaRSA, TRSSN, and LSSSN on  problem \eqref{eq:diff}. Plot of the norm of the natural residual with respect to the required cpu-time for different $\mu$. The test images in the  subfigures (a)-(c) and (d)-(f) are \texttt{football} and \texttt{city}, respectively.}
  \label{fig3-2}
  \end{figure} 
 
  \begin{figure}[t!]
    \centering
    \subfigure[\texttt{mountain}, original figure]{
    \includegraphics[width=3.6cm]{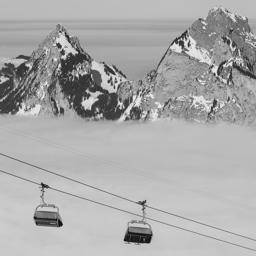}
    }
    \subfigure[mask (${ds}\!=\!11.35\%$)]{
    \includegraphics[width=3.6cm]{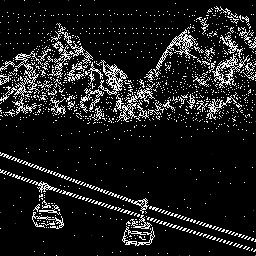}
    }
    \subfigure[reconstruction ($\mu=0.003$)]{
    \includegraphics[width=3.6cm]{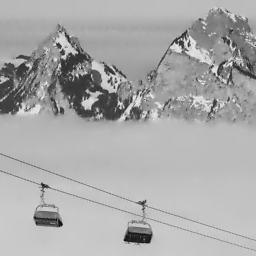}
    }
    
    \subfigure[\texttt{football}, original figure]{
      \includegraphics[width=3.6cm]{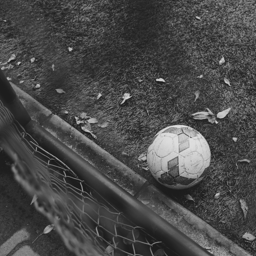}
      }
      \subfigure[ mask (${ds}\!=\!8.09\%$)]{
      \includegraphics[width=3.6cm]{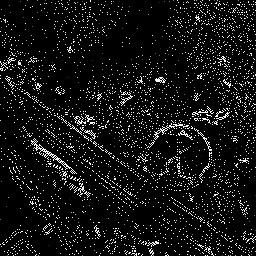}
      }
      \subfigure[reconstruction ($\mu=0.006$)]{
      \includegraphics[width=3.6cm]{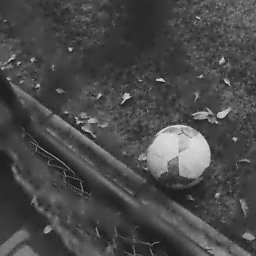}
    }
    \subfigure[\texttt{city}, original figure]{
      \includegraphics[width=3.6cm]{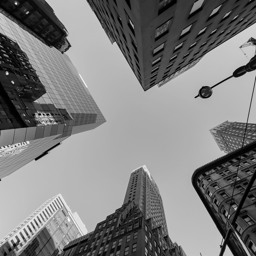}
      }
      \subfigure[mask (${ds}\!=\!7.13\%$)]{
      \includegraphics[width=3.6cm]{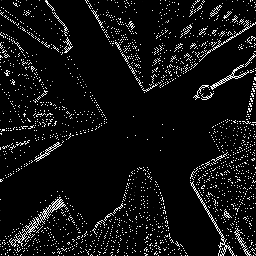}
      }
      \subfigure[reconstruction ($\mu=0.01$)]{
      \includegraphics[width=3.6cm]{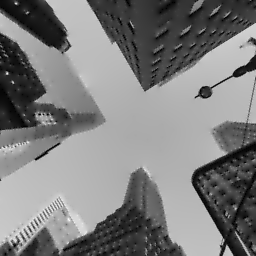}
      }
    \caption{Illustration of the different ground truth images, the inpainting or compression masks, and the corresponding reconstructions for \texttt{mountain}, \texttt{football}, and \texttt{city}. The density ($ds$) of the masks $c$ is calculated via ${ds} := 100\% \cdot [ \vert\{i: c_i > 0\}\vert / (256 \times 256)]$.}
    \label{fig4}
    \end{figure} 

    Our results are shown in Figures~\ref{fig3-1}, \ref{fig3-2} and \ref{fig4}.  We compare the performance of the algorithms on the six images  \texttt{books}, \texttt{coffee}, \texttt{mountain}, \texttt{stones}, \texttt{football} and \texttt{city}\footnote{Image credentials: \texttt{books} by Suzy Hazelwood; \texttt{coffee} by Atichart Wongubon; \texttt{mountain} by Denis Linine; \texttt{stones} by Travel Photographer; \texttt{football} by chancema, \texttt{city} by FOCA stock, all images can be found on StockSnap.}. All images are rescaled to size $256\times 256$, and we use $\mu = 0.003$ in Figure~\ref{fig3-1} and $\mu\in\{0.01, 0.006, 0.003\}$ in Figure~\ref{fig3-2}. LSSSN and TRSSN perform comparably on the images \texttt{books}, \texttt{coffee}, and \texttt{stones} in terms of cpu-time. Furthermore, for the remaining tested images, LSSSN outperforms all other algorithms, demonstrating its efficiency across various image types. As shown in Figure~\ref{fig3-2}, LSSSN requires less computational time if $\mu$ is small for tests on the image \texttt{football}. In addition, the performance of LSSSN is less sensitive to the choice of $\mu$ on the image \texttt{city}, while other algorithms appear to be less robust. As illustrated in Figure~\ref{fig4}, the parameter $\mu$ directly influences the sparsity of $c$---with higher values leading to sparser masks.

\subsection{Nonlinear Least-Squares With Group Lasso Penalty}\label{ssec:nonlinear}
Finally, we consider the nonlinear least square problem \cite{xu2020second} with $\min_{x}~\psi(x)=f(x)+\varphi(x)$ and
\begin{equation} \label{eq:prob-group}
f(x):=\frac{1}{2N}\sum_{i=1}^N(\sigma(\iprod{a_i}{x})-b_i)^2,\quad \varphi(x):=\mu\sum_{j=1}^p\|x_{g_j}\|_2,
\end{equation}
where $\sigma : \R \to \R$ is the sigmoid function $\sigma(x) := (1+\exp(-x))^{-1}$, $A=(a_1,\dots,a_N)^\top\in\mathbb{R}^{N\times n}$ is the data matrix and $b=(b_1,\dots,b_N)^\top$ again denotes the corresponding labels.  Moreover, the vector $x_{g_j}\in \R^{n_j}$ consists of a group of components of $x$. We randomly generate these groups without overlap---each group containing 16 elements. The Lipschitz constant $L$ of $\nabla f$ is given by $\|A\|^2/(12N)$. According to \cite{parikh2014proximal,milzarek2016numerical}, we have
\begin{equation*}
\begin{aligned}
&[\proxs(z)]_{g_j}= \max\Big\{0,1-\frac{\mu\lambda}{\|z_{g_j}\|}\Big\}z_{g_j},\quad 
D(z)_{[g_ig_j]}=0 \text{ if $i\neq j$},\\
&D(z)_{[g_ig_i]}=
\begin{cases}
\left(1-\frac{\mu\lambda}{\|z_{g_i}\|}\right)I+\frac{\mu\lambda}{\|z_{g_i}\|^3}z_{g_i}z_{g_i}^\top  & \|z_{g_i}\|> \mu\lambda,\\
0 & \text{otherwise}.
\end{cases}
\end{aligned}
\end{equation*}
We set $\lambda=10/L$, $\mu = 2/N$ and $x_0 = 0$. (Here, for a better numerical performance, we choose $\lambda$ in LSSSN to depend on $L$). We test six different datasets (\texttt{gisette}, \texttt{covtype}, \texttt{rcv1}, \texttt{CINA}, \texttt{news20}, \texttt{real-sim}) using the same algorithms as in Section~\ref{ssec:lineardiffusion} and ForBES~\cite{SteThePat17}. The latter one applies the semismooth Newton method to the natural residual $F^{\lambda}_{\mathrm{nat}}$ using a forward-backward envelope as the merit function. We use the code provided by the authors\footnote{\url{https://github.com/kul-forbes/ForBES}} and choose $\lambda=1/L$ as the initial value. All other parameters are set to be default values. The results are shown in Figure~\ref{nls-time} and Figure~\ref{nls-lambda}. 

\begin{figure}[t]
  \centering
  
  \subfigure[\texttt{gisette} -- cpu-time.]{
  \includegraphics[width=3.6cm]{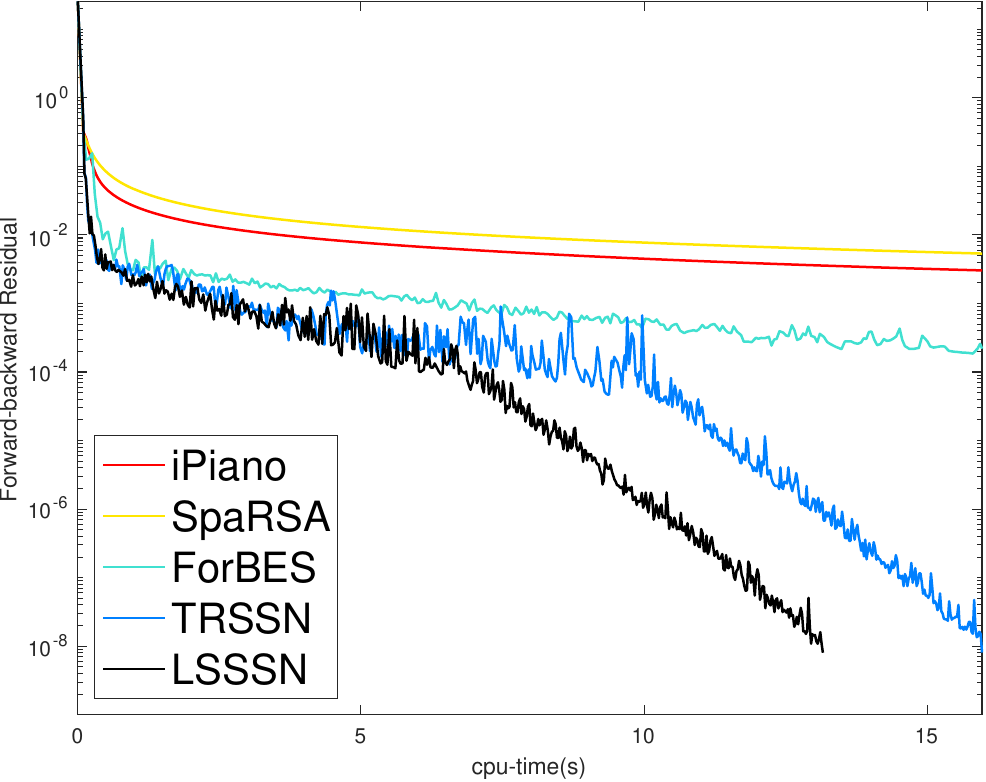}
  }
  \subfigure[\texttt{covtype} -- cpu-time.]{
  \includegraphics[width=3.6cm]{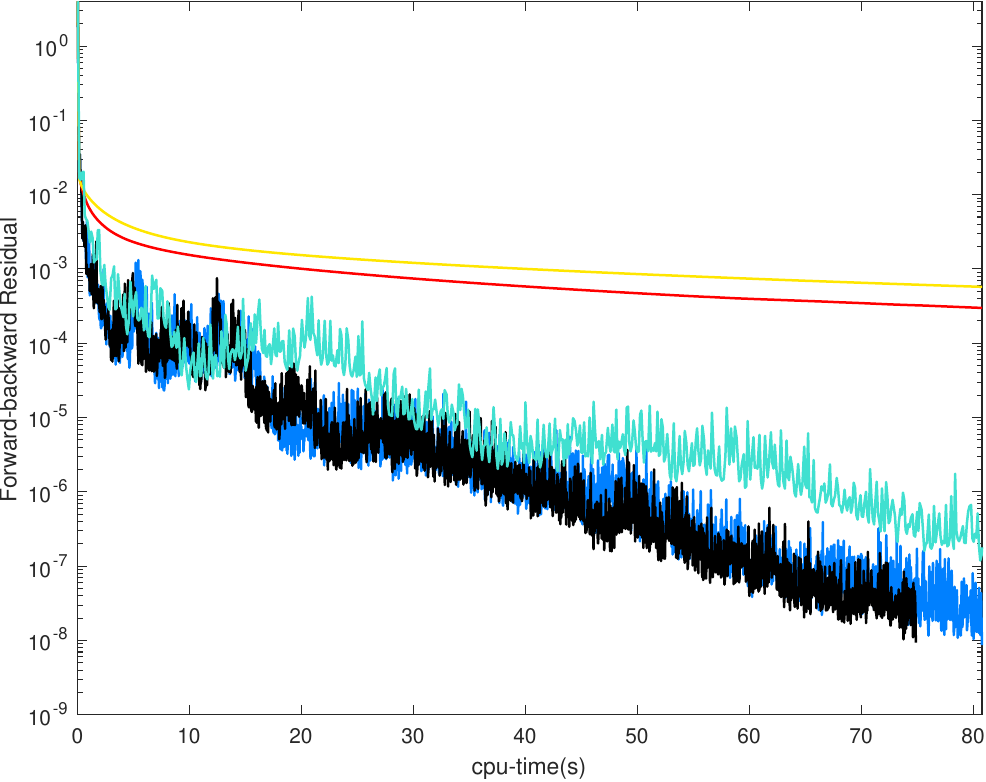}
  }
  \subfigure[\texttt{rcv1} -- cpu-time.]{
  \includegraphics[width=3.6cm]{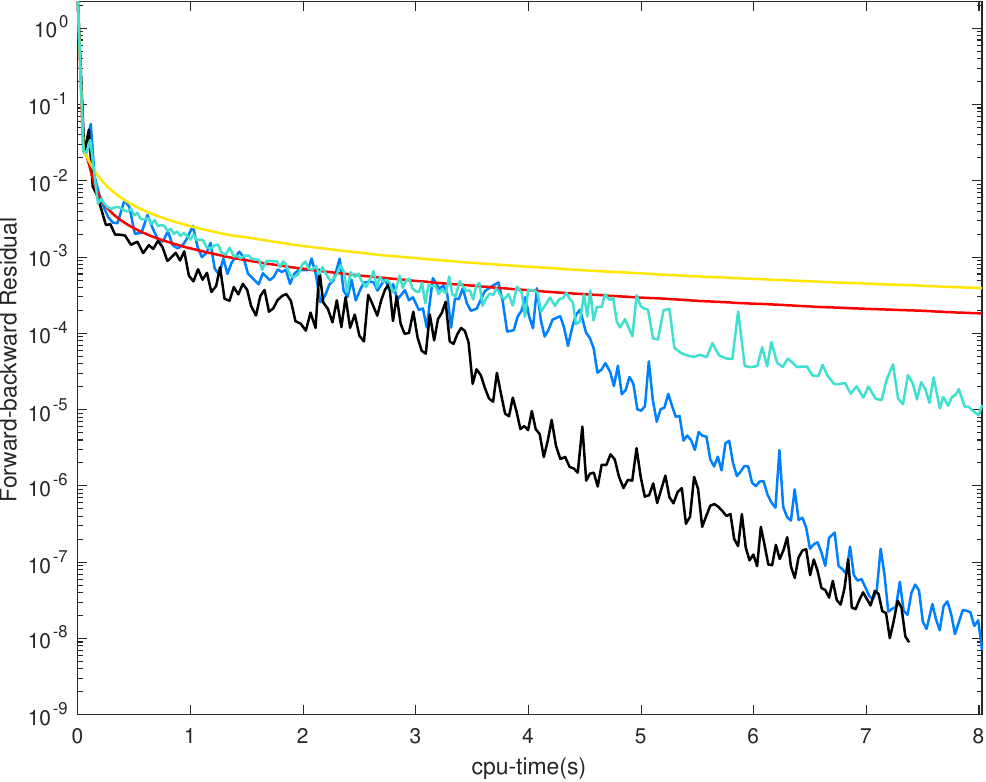}
  }
 \\
  \subfigure[\texttt{CINA} -- cpu-time.]{
  \includegraphics[width=3.6cm]{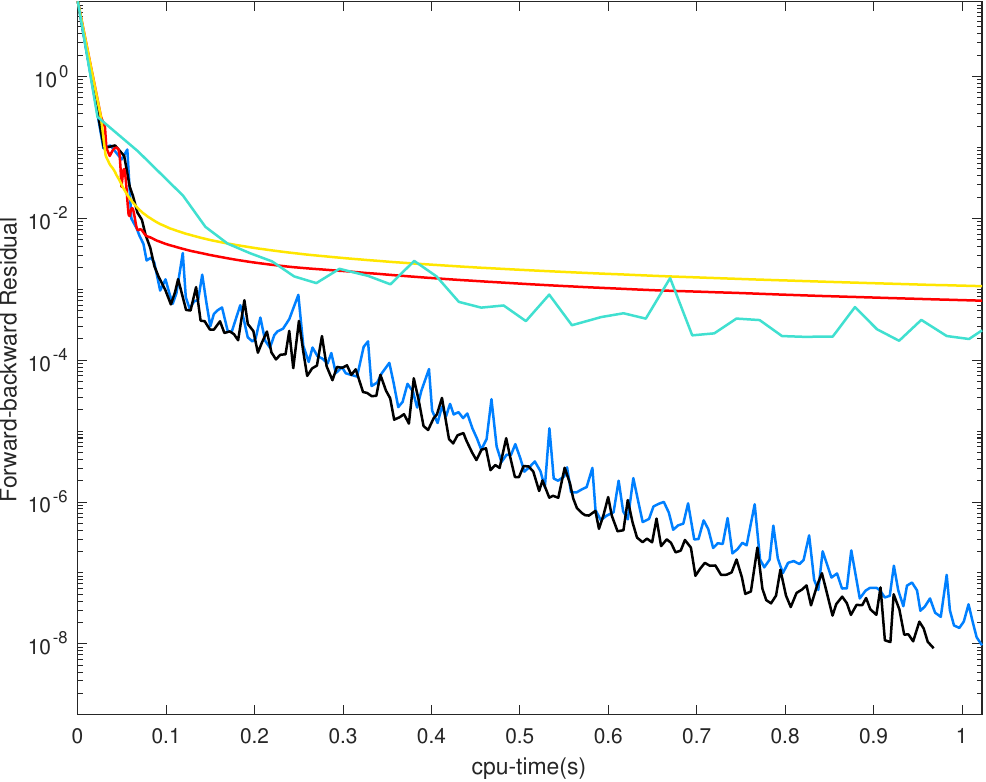}
  }
  \subfigure[\texttt{news20} -- cpu-time.]{
  \includegraphics[width=3.6cm]{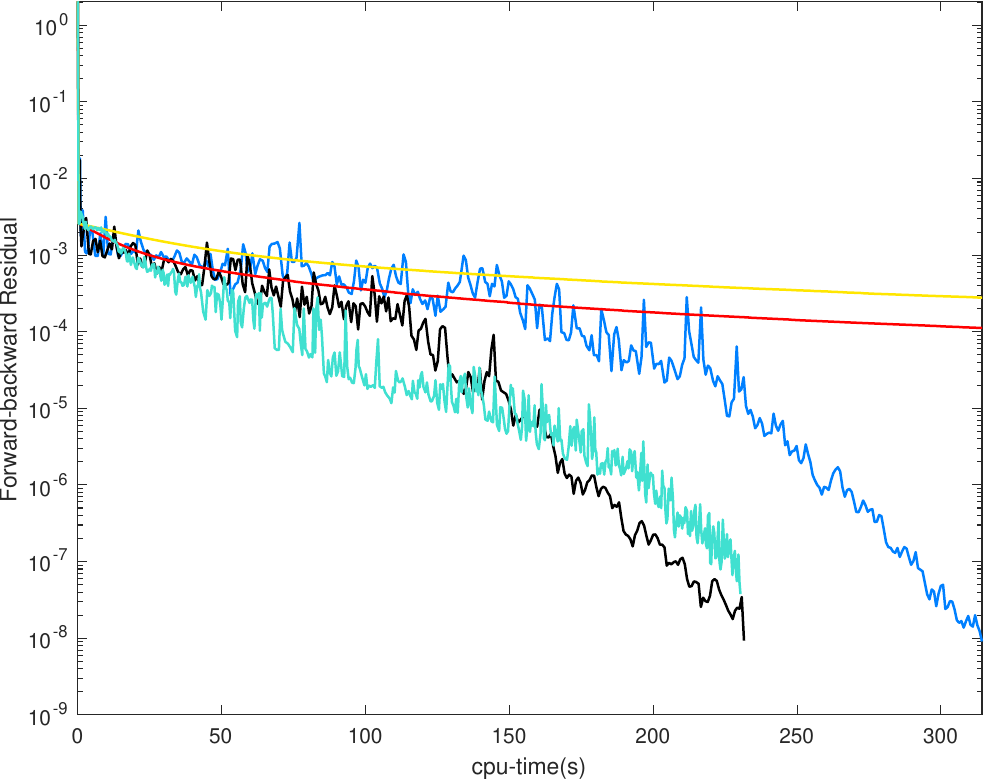}
  }
  \subfigure[\texttt{real-sim} -- cpu-time.]{
  \includegraphics[width=3.6cm]{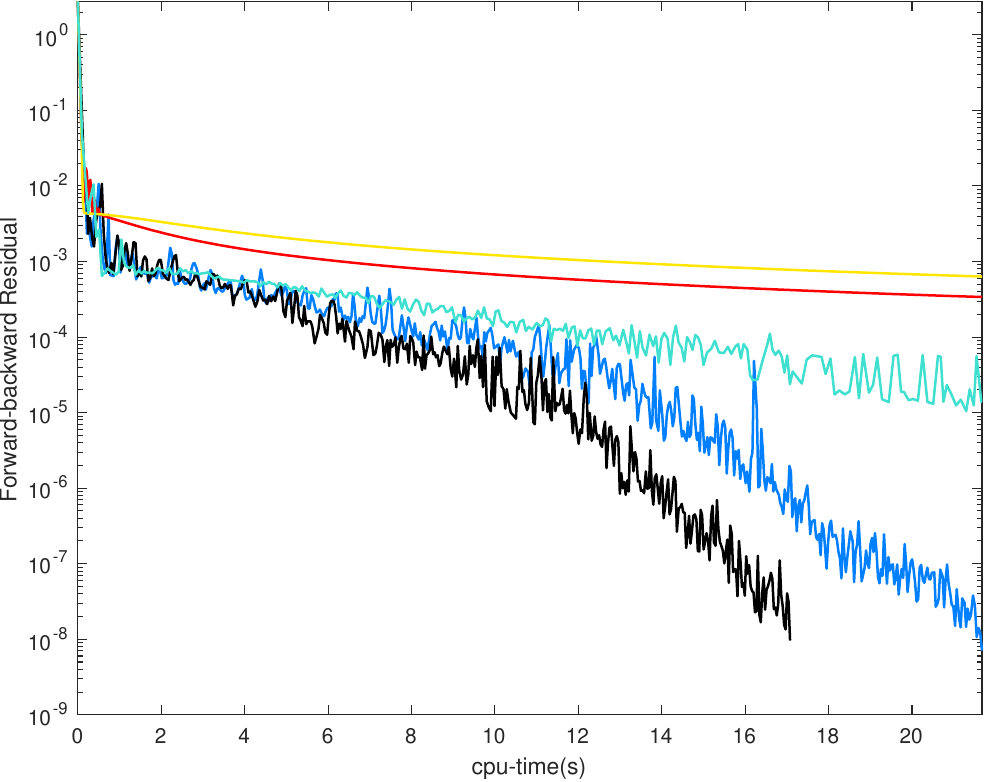}}
  \caption{Comparison of iPiano, ForBES, SpaRSA, TRSSN, and LSSSN on the group sparse least-squares problem \eqref{eq:prob-group}. Plot of the norm of the natural residual w.r.t.\ the cpu-time.}
  \label{nls-time}
  \end{figure}

\begin{figure}[t]
  \centering
  \subfigure[\texttt{gisette} -- cpu-time.]{
  \includegraphics[width=3.6cm]{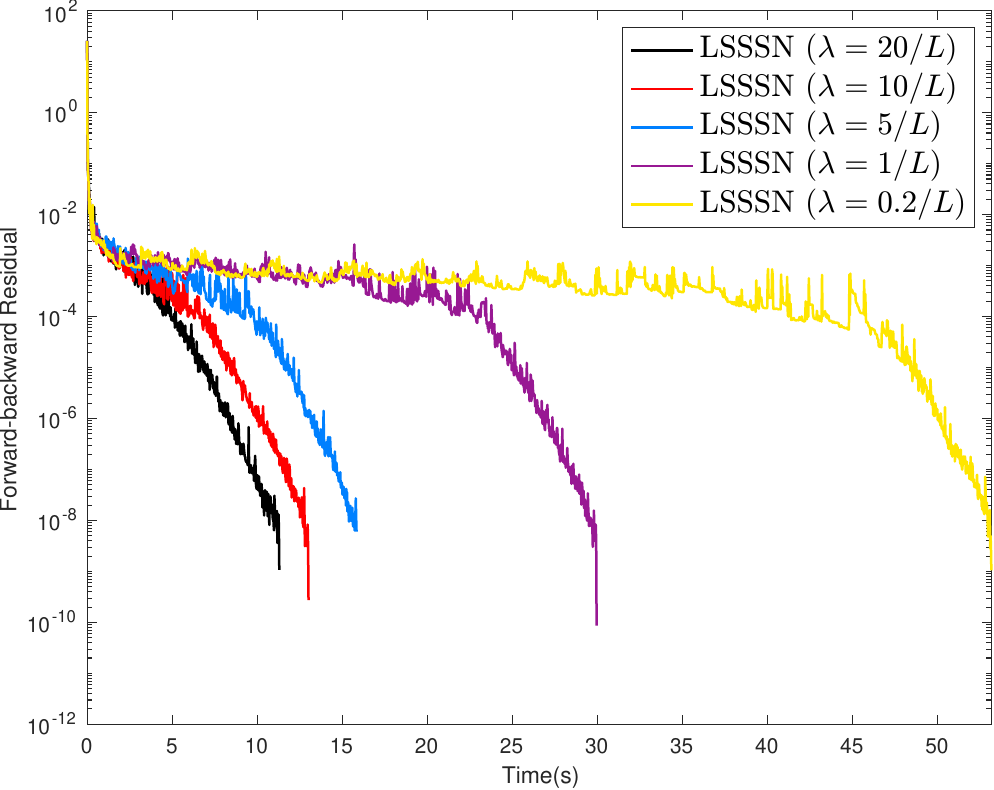}
  }
  \subfigure[\texttt{rcv1} -- cpu-time.]{
  \includegraphics[width=3.6cm]{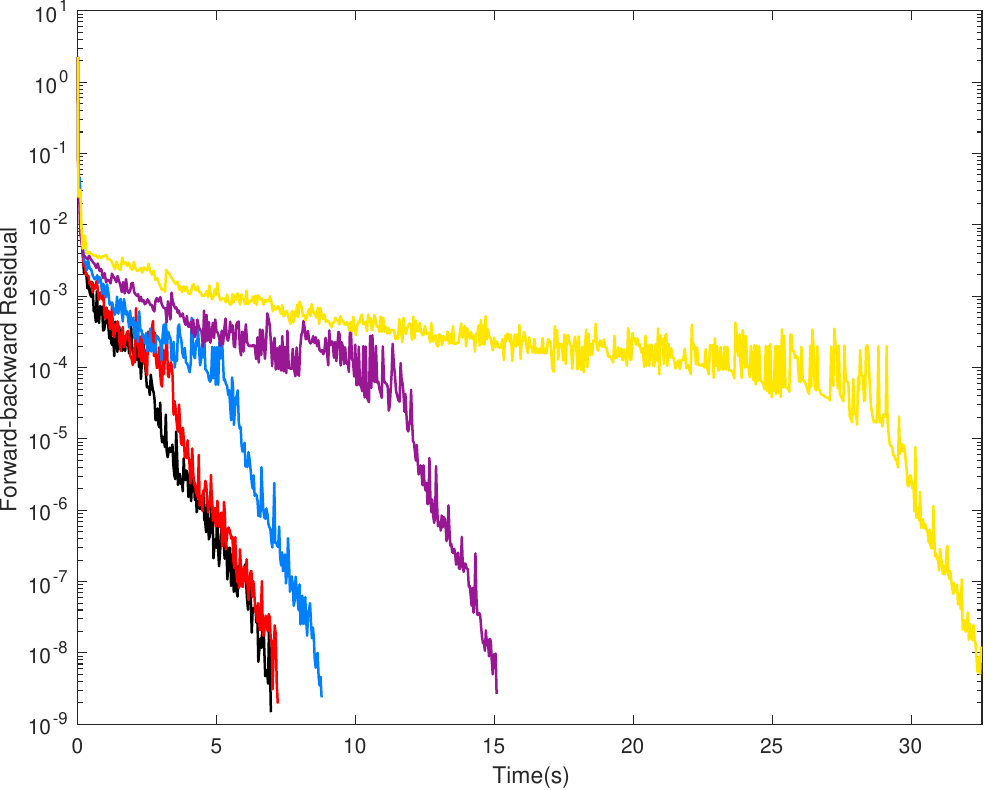}
  }
  \subfigure[\texttt{real-sim} -- cpu-time.]{
  \includegraphics[width=3.6cm]{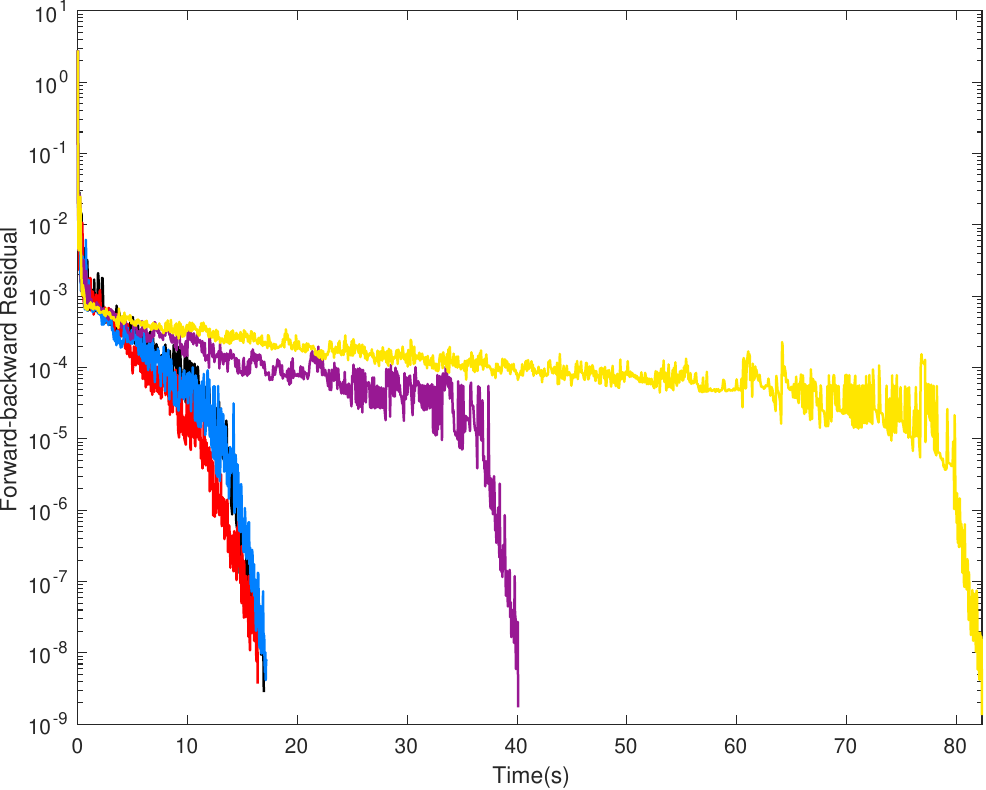}
  }
 \\
  \subfigure[\texttt{gisette} -- Iteration.]{
  \includegraphics[width=3.6cm]{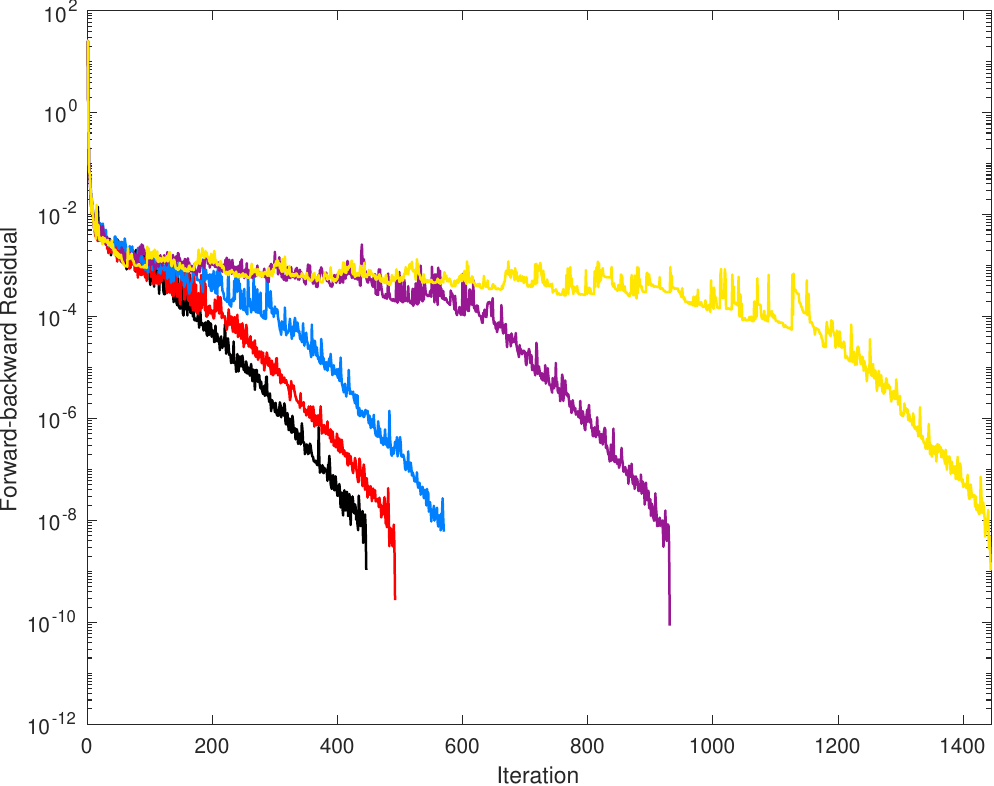}
  }
  \subfigure[\texttt{rcv1} -- Iteration.]{
  \includegraphics[width=3.6cm]{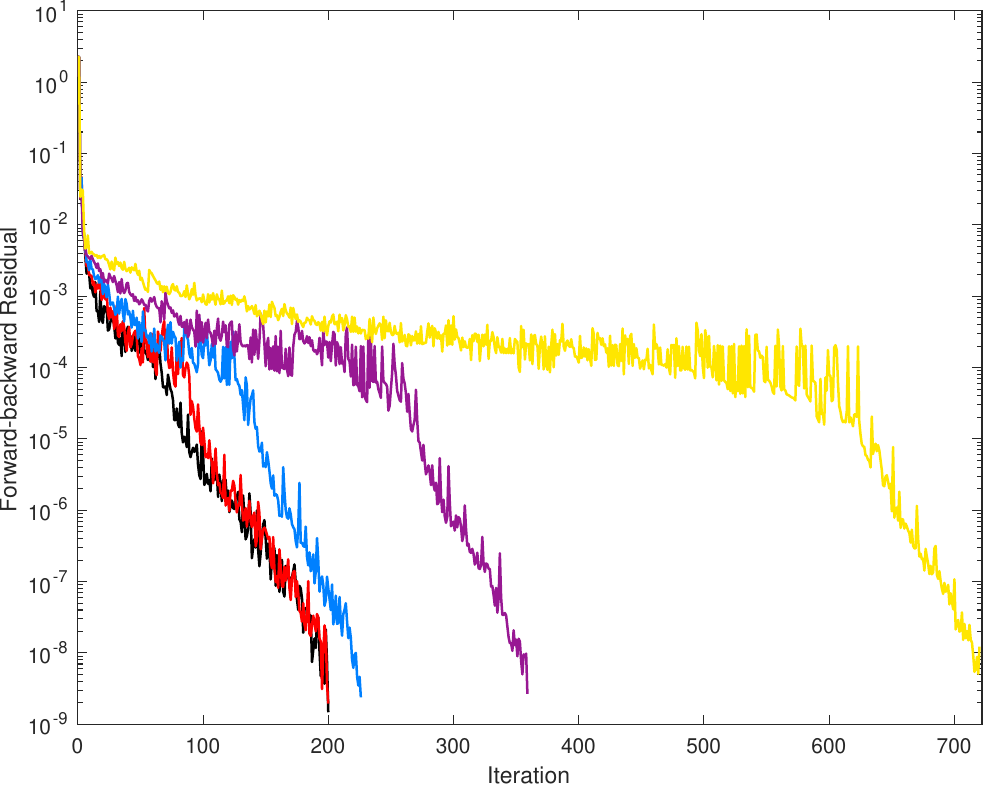}
  }
  \subfigure[\texttt{real-sim} -- Iteration.]{
  \includegraphics[width=3.6cm]{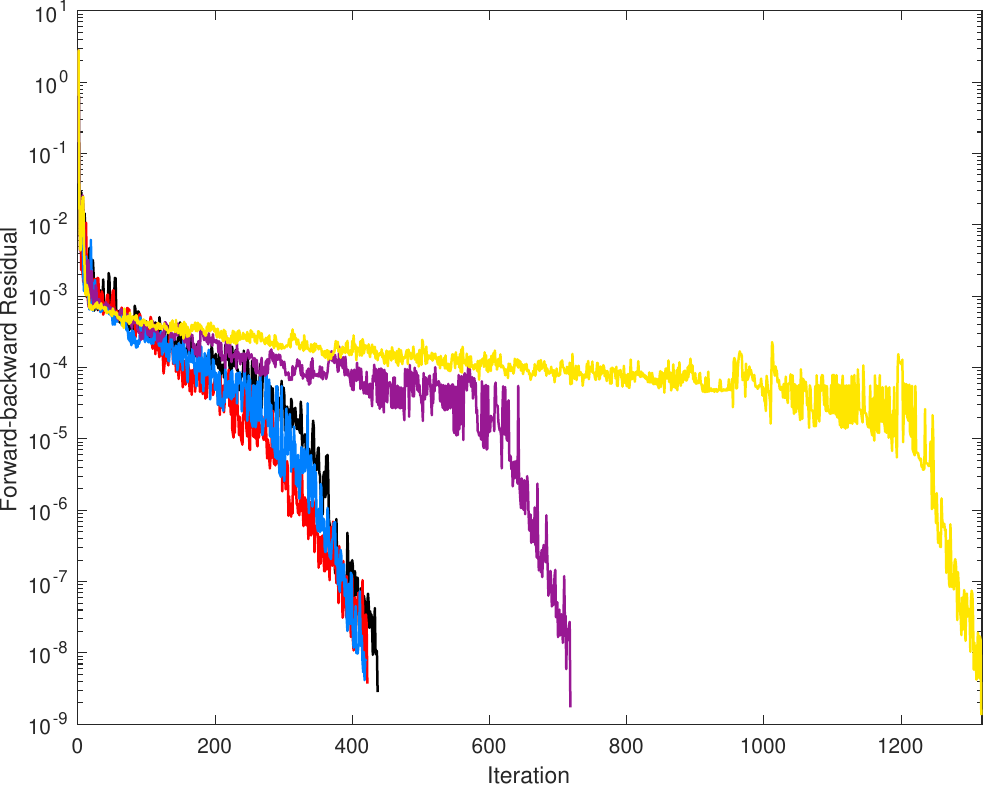}
  }
  \caption{Ablation study of LSSSN using different $\lambda \in \{20/L,10/L,5/L,1/L,0.2/L\}$ on the group sparse least-squares problem \eqref{eq:prob-group} and on the datasets \texttt{gisette}, \texttt{rcv1}, \texttt{real-sim}.}
  \label{nls-lambda}
  \end{figure} 

 Figure~\ref{nls-time} shows that LSSSN outperforms all other algorithms on the datasets \texttt{gisette}, \texttt{rcv1}, and \texttt{real-sim}. During the first iterations, LSSSN, TRSSN, and ForBES usually have a comparable convergence speed, but LSSSN converges faster to a higher precision region ($\leq 10^{-5}$). Notably, the linesearch-based methods, LSSSN and ForBES, demonstrate a better performance than TRSSN on \texttt{news20}. The first-order algorithms typically perform well initially but are eventually outperformed by the tested higher-order methods. In Figure~\ref{nls-lambda}, we also run LSSSN with different choices of $\lambda$ on three datasets. It turns out that larger choices of $\lambda$ tend to yield better and more robust results, while too small values of $\lambda$ can affect the overall performance of the algorithm. Figure~\ref{nls-lambda} illustrates that the performance of LSSSN can be potentially further improved if $\lambda$ is tuned more carefully.
 

\section{Conclusion and Future Work}\label{sec:conclusion}
This work proposes a linesearch-type normal map-based semismooth Newton method (LSSSN) for solving a class of nonsmooth nonconvex optimization problems.  Our approach builds upon the trust region-type semismooth Newton method (TRSSN) \cite{ouyang2025trust} and on the general similarity and difference between linesearch and trust region mechanisms. In addition, we design an estimation strategy to avoid the explicit computation of the Lipschitz constant. The proposed algorithm retains the same convergence properties as its trust region counterpart, accompanied by a refined analysis. Finally, our numerical experiments on sparse logistic regression, an image compression problem, and a nonlinear least square problem with group Lasso penalty demonstrate that LSSSN is comparable with TRSSN and performs well on both convex and nonconvex problems. Looking ahead, one possible future work is to show that every accumulation point is a stationary point for normal map-based approaches assuming only local Lipschitz continuity of $\nabla f$. However, this causes more intricate dynamics of the regularization parameters $\{\tau_k\}$ and will likely necessitate further modifications of our algorithm framework. Furthermore, it might be interesting to study whether the proposed approach can effectively avoid (active) saddle points, see, e.g., \cite{davis2022proximal,jiang2024saddle}. \vspace{1ex}

\section*{Statements and Declarations} \vspace{-1ex}
\noindent\textbf{Funding and/or Conflicts of interests/Competing interests.}  Andre Milzarek was partly supported by the National Natural Science Foundation of China (Foreign Young Scholar Research Fund Project) under Grant No$.$ 12150410304, by the Shenzhen Science and Technology Program (No$.$ RCYX20221008093033010), and by Shenzhen Stability Science Program 2023, Shenzhen Key Lab of Multi-Modal Cognitive Computing. \vspace{.5ex}

\noindent\textbf{Acknowledgments.} We thank the Coordinating Editor and the four anonymous reviewers for their detailed and constructive comments, which have greatly helped to improve the quality and presentation of the manuscript.

\renewcommand*{\bibfont}{\fontsize{8.97pt}{10.27pt}\selectfont}
\setlength{\bibsep}{4pt plus 1ex}
\bibliography{sn-references}


\appendix
\section{The CG Method and Proof of \texorpdfstring{Lemma~\ref{lemma3-8}}{Lemma 2.1}} \label{appen:prooflemma}

\begin{algorithm}
        \caption{The CG Method}
          \label{algo:CG}
        \begin{algorithmic}[1]  
            \Require Input $S=DM$, $g =D\Fnor{z}$, $\epsilon \geq 0$. Set $q_0=0$, $r_0=g$, $p_0 = -g$ and $i=0$.
            \If{$\|r_0\|\leq\epsilon$}
            \State Terminate with $\tilde q = q_0$;
            \EndIf
            \While{$i\leq n-1$}
                    \If{$\iprod{p_i}{Sp_i}\leq0$}
                    \State Terminate with $\tilde q = p_0$ if $i=0$ and $\tilde q = q_i$ if $i \neq 0$;
                    \EndIf
                    \State Set $\alpha_i=\frac{\langle r_i,r_i \rangle}{\langle p_i,Sp_i\rangle}$, $q_{i+1}=q_i+\alpha_ip_i$, and $r_{i+1}=r_i+\alpha_iSp_i$;
                    \If{$\|r_{i+1}\|\leq \epsilon$}
                    \State Terminate with $\tilde q = q_{i+1}$;
                    \EndIf
                    \State Set $\beta_{i+1}=\frac{\|r_{i+1}\|^2}{\|r_i\|^2}$ and $p_{i+1}=-r_{i+1}+\beta_{i+1}p_i$;
                    \State $i\gets i+1$
            \EndWhile
        \end{algorithmic}
\end{algorithm}

\begin{proof}
Denote $S=DM$, $g=D\Fnor{z}$. It is clear that $\mathrm{dim}~\mathrm{range}(S)\le m$, $\mathrm{range}(S) \subset \mathrm{range}(D)$ and $g \in \mathrm{range}(D)$. Let us consider the iteration $i = m-1$ and suppose that the CG method has not terminated by detecting non-positive curvature. Following \cite{HesSti52, NocWri06}, it can be shown that the CG method has generated a sequence of vectors $\{r_0,r_1,\dots,r_{m-1}\}$ and $\{p_0,p_1,\dots,p_{m-1}\}$ with the properties
\be \label{eq:cg-prop} \iprod{p_\ell}{Sp_j} = 0, \;\; \iprod{r_\ell}{p_j} = 0, \;\; \mathrm{span}\{r_0,\dots,r_{\ell}\} = \mathrm{span}\{p_0,\dots,p_{\ell}\} = \mathcal K^\ell(S,r_0), \ee
for all $j = 0,\dots,\ell - 1$ and $\ell = 1,\dots,m-1$. Here, $\mathcal K^\ell(S,r_0)$ denotes the Krylov space $\mathrm{span}\{r_0, Sr_0,\dots,S^\ell r_0\}$. By construction, we have $p_\ell, r_\ell \in \mathrm{range}(D)$ and $\mathcal K^\ell(S,r_0) \subseteq  \mathrm{range}(D)$ for all $\ell = 0,\dots,m-1$. Moreover, due to the $S$-orthogonality and $\iprod{p_\ell}{Sp_\ell} > 0$, $\ell = 0,\dots,m-1$, the vectors $\{p_0,p_1,\dots,p_{m-1}\}$ are linearly independent and hence, it follows $\mathrm{span}\{p_0,p_1,\dots,p_{m-1}\} = \mathrm{range}(D)$. Using the second equality in \eqref{eq:cg-prop}, we can infer $$\mathrm{range}(D) \ni r_m \, \bot \, \mathrm{span}\{p_0,p_1,\dots,p_{m-1}\} = \mathrm{range}(D),$$
which implies $r_m = r_0 + S q_m = 0$. Consequently, the CG method would stop at iteration $m$ with $q = q_m$ satisfying $Sq = - g$. This finishes the proof of the first part. 

For part~(ii), when $S$ is positive semidefinite and $M$ is invertible, we note that
$$\iprod{g}{DMg} \leq 0 \; \iff \; \|[DM]^\frac12 g\| = 0 \; \iff \;  D M g = 0 \; \iff \; M^\top D g = 0.$$
Thus, the condition $\iprod{p_i}{Sp_i} \leq 0$ implies  $Dp_i = 0$. Combining $-\iprod{r_0}{p_0}=\|r_0\|^2$, $p_i=-r_i+\beta_i p_{i-1}$,  and \eqref{eq:cg-prop}, we have $\|r_i\|^2 = -\iprod{r_i}{p_i}$. By $r_i=r_0 + \sum_{j=0}^{i-1}\alpha_j S p_j$ and \eqref{eq:cg-prop}, it then holds that $-\iprod{r_i}{p_i} = -\iprod{r_0}{p_i} = - \iprod{\Fnor{z}}{Dp_i} = 0$.
Hence, the CG method would exit at the $(i-1)$-th iteration when checking the norm of the residual $\|r_i\|$ instead of detecting non-positive curvature at the $i$-th iteration and therefore the method cannot terminate at step 6 of Algorithm~\ref{algo:CG}. Consequently, Algorithm~\ref{algo:CG} can only terminate in step 10 which completes the proof.

The proof for part~(iii) is identical to the derivation of \cite[Lemma 3.1]{ouyang2025trust}.
\end{proof}

\end{document}